\numberwithin{equation}{section}
\newcommand{\ie}{{\em i.e.}\ }
\newcommand{\confer}{{\em cf.}\ }
\newcommand{\ko}{\: , \;}
\newcommand{\ul}[1]{\underline{#1}}
\newtheorem{theorem}{Theorem}[section]
\newtheorem*{theorem*}{Theorem}
\newtheorem{lemma}[theorem]{Lemma}
\newtheorem{proposition}[theorem]{Proposition}
\newtheorem{corollary}[theorem]{Corollary}
\newtheorem{conjecture}[theorem]{Conjecture}
\newtheorem*{conjecture*}{Conjecture}
\newtheorem{example}[theorem]{Example}
\newtheorem{remark}[theorem]{Remark}
\newtheorem{definition}[theorem]{Definition}
\newcommand{\opname}[1]{\operatorname{\mathsf{#1}}}
\renewcommand{\mod}{\opname{mod}\nolimits}
\newcommand{\Mod}{\opname{Mod}\nolimits}
\newcommand{\per}{\opname{per}\nolimits}
\newcommand{\add}{\opname{add}\nolimits}
\newcommand{\op}{^{op}}
\newcommand{\dimv}{\underline{\dim}\,}
\newcommand{\rank}{\opname{rank}\nolimits}
\newcommand{\ind}{\opname{ind}}
\newcommand{\coind}{\opname{coind}}
\renewcommand{\ker}{\opname{ker}\nolimits}
\newcommand{\obj}{\opname{obj}\nolimits}
\newcommand{\Z}{\mathbb{Z}}
\newcommand{\Q}{\mathbb{Q}}
\newcommand{\C}{\mathbb{C}}
\renewcommand{\P}{\mathbb{P}}
\newcommand{\T}{\mathbb{T}}
\newcommand{\iso}{\stackrel{_\sim}{\rightarrow}}
\newcommand{\sgn}{\opname{sgn}}
\newcommand{\Hom}{\opname{Hom}}
\newcommand{\Ext}{\opname{Ext}}
\newcommand{\End}{\opname{End}}
\newcommand{\ca}{{\mathcal A}}
\newcommand{\cc}{{\mathcal C}}
\newcommand{\cd}{{\mathcal D}}
\newcommand{\ce}{{\mathcal E}}
\newcommand{\cf}{{\mathcal F}}
\newcommand{\ch}{{\mathcal H}}
\newcommand{\cp}{{\mathcal P}}
\newcommand{\cq}{{\mathcal Q}}
\newcommand{\ct}{{\mathcal T}}
\newcommand{\cu}{{\mathcal U}}
\renewcommand{\hat}[1]{\widehat{#1}}
\begin{document}

\author{Changjian Fu and  Bernhard Keller}
\title{On cluster algebras with coefficients and 2-Calabi-Yau categories}
\date{October 16, 2007. Last modified on December 28, 2008}
\address{Department of Mathematics\\
Sichuan University\\
610064 Chengdu\\
P.R.China
}
\email{
\begin{minipage}[t]{5cm}
flyinudream@yahoo.com.cn
\end{minipage}
}
\address{
U.F.R. de Math\'ematiques\\
Institut de Math\'ematiques\\
U.M.R. 7586 du CNRS\\
Universit\'e Denis Diderot - Paris 7\\
2, place Jussieu\\
75251 Paris Cedex 05\\
France
}
\email{
\begin{minipage}[t]{5cm}
keller@math.jussieu.fr \\
www.math.jussieu.fr/ $\widetilde{ }$ keller
\end{minipage}
}
\subjclass{18E30, 16D90,  18G10}  \keywords{Cluster algebra,
Tilting, 2-Calabi-Yau category}

\begin{abstract}
  Building on work by Geiss-Leclerc-Schr\"oer and by
  Buan-Iyama-Reiten-Scott we investigate the link between certain cluster
  algebras with coefficients and suitable 2-Calabi-Yau categories.
  These include the cluster-categories associated with acyclic quivers
  and certain Frobenius subcategories of module categories over
  preprojective algebras. Our motivation comes from the conjectures
  formulated by Fomin and Zelevinsky in `Cluster algebras IV:
  Coefficients'. We provide new evidence for Conjectures~5.4, 6.10,
  7.2, 7.10 and 7.12 and show by an example that the statement of
  Conjecture~7.17 does not always hold.
\end{abstract}

\maketitle

\section{Introduction}
In this article, we pursue the representation-theoretic approach to
Fomin-Zelevinsky's cluster algebras \cite{FominZelevinsky02}
\cite{FominZelevinsky03} \cite{BerensteinFominZelevinsky05}
\cite{FominZelevinsky07} developed by Marsh-Reineke-Zelevinsky
\cite{MarshReinekeZelevinsky03}, Buan-Marsh-Reineke-Reiten-Todorov
\cite{BuanMarshReinekeReitenTodorov06}
\cite{BuanMarshReitenTodorov07}, Caldero-Chapoton and Caldero-Keller
\cite{CalderoChapoton06} \cite{CalderoKeller06}
\cite{CalderoKeller08}, Geiss-Leclerc-Schr\"oer
\cite{GeissLeclercSchroeer05} \cite{GeissLeclercSchroeer06}
\cite{GeissLeclercSchroeer07b}, \cite{BuanIyamaReitenScott07} and
many others, \confer the surveys \cite{BuanMarsh06} \cite{Keller08c}
\cite{Reiten06} \cite{Ringel07} .

Our emphasis here is on cluster algebras {\em with coefficients}.
More precisely, we investigate certain symmetric cluster algebras of
geometric type with coefficients. Coefficients are of great
importance both in geometric examples of cluster algebras
\cite{GekhtmanShapiroVainshtein03}
\cite{GekhtmanShapiroVainshtein05}
\cite{BerensteinFominZelevinsky05}
\cite{Scott06} \cite{GeissLeclercSchroeer07b} and in the study of
duality phenomena \cite{FockGoncharov03} as shown in
\cite{FominZelevinsky07}. Following \cite{BuanIyamaReitenScott07},
we consider two types of categories which allow us to incorporate
coefficients into the representation-theoretic model:
\begin{itemize}
\item[1)] $2$-Calabi-Yau Frobenius categories;
\item[2)] $2$-Calabi-Yau `subtriangulated' categories, {\em i.e.} full
  subcategories of the form $^\perp\!(\Sigma {\mathcal D})$ of a
  $2$-Calabi-Yau triangulated category ${\mathcal C}$, where
  ${\mathcal D}$ is a rigid functorially finite subcategory of
  ${\mathcal C}$ and $\Sigma$ the suspension functor of ${\mathcal
    C}$.
\end{itemize}
In both cases, we establish the link between the category and its associated
cluster algebra using (variants of) cluster characters in the sense of
Palu \cite{Palu06}.  For subtriangulated categories, we use the
restriction of the cluster characters constructed in \cite{Palu06}.
For Frobenius categories, we construct a suitable variant in section~\ref{CC2FC}
(Theorem~\ref{thm1}).

The work of Geiss-Leclerc-Schr\"oer \cite{GeissLeclercSchroeer06}
\cite{GeissLeclercSchroeer07b} and Buan-Iyama-Reiten-Scott
\cite{BuanIyamaReitenScott07} provides us with large classes of
$2$-Calabi-Yau Frobenius categories and of $2$-Calabi-Yau
subtriangulated categories which admit cluster structures in the
sense of \cite{BuanIyamaReitenScott07}. Our general results imply
that for these classes, the $2$-Calabi-Yau categories do yield
`categorifications' of the corresponding cluster algebras with
coefficients (Theorems~\ref{thm2} and ~\ref{thm3}). As an
application, we show that Conjectures 7.2, 7.10 and 7.12 of
\cite{FominZelevinsky07} hold for these cluster algebras
(Proposition~\ref{prop5} and Theorem~\ref{thm3}). Let us recall the
statements of these conjectures:
\begin{itemize}
\item[7.2] cluster monomials are linearly independent;
\item[7.10] different cluster monomials have different
  $\mathbf{g}$-vectors and the $\mathbf{g}$-vectors of the cluster
  variables in any given cluster form a basis of the ambient lattice;
\item[7.12] the $\mathbf{g}$-vectors of a cluster variable with respect to
two neighbouring clusters are related by a certain piecewise linear
transformation (so that the $\mathbf{g}$-vectors equal the $\mathbf{g}^\dagger$-vectors
of \cite{DehyKeller08}).
\end{itemize}

In the case of cluster algebras with principal coefficients admitting
a categorification by a 2-Calabi-Yau subtriangulated category, we
obtain a representation-theoretic interpretation of the $F$-polynomial
defined in section~3 of \cite{FominZelevinsky07}, \confer Theorem~\ref{thm4}.  This interpretation implies in
particular that Conjecture~5.4 of \cite{FominZelevinsky07} holds in
this case: The constant coefficient of the $F$-polynomial equals $1$.
We also deduce that the multidegree of the $F$-polynomial associated
with a rigid indecomposable equals the dimension vector of the
corresponding module (Proposition~\ref{prop7}). By combining this with recent work by
Buan-Marsh-Reiten \cite{BuanMarshReiten08a}, \confer also \cite{Dupont07}, we obtain a counterexample to
Conjecture~7.17 (and 6.11) of \cite{FominZelevinsky07}.  We point out
that the corresponding computations were already present in
G.~Cerulli's work \cite{Cerulli08}. Following a suggestion by A.~Zelevinsky,
we show that, by assuming the existence of suitable categorifications,
instead of the equality claimed in Conjecture~7.17, one does have an
inequality: The multidegree of the $F$-polynomial is greater or equal
to the denominator vector (Proposition~\ref{prop8}). We also show in certain cases that the transformation rule for
$\mathbf{g}$-vectors predicted by Conjecture~6.10 of
 \cite{FominZelevinsky07} does hold (Proposition~\ref{prop9}).

Let us emphasize that our proofs for certain cluster algebras
of some of the conjectures
of \cite{FominZelevinsky07} depend on the existence of suitable
Hom-finite $2$-Calabi-Yau categories with a cluster-tilting object.
This hypothesis imposes a finiteness condition on the corresponding
cluster algebra (to the best of our knowledge, it is not known how
to express this condition in combinatorial terms). The construction
of such $2$-Calabi-Yau categories is a non trivial problem for which
we rely on \cite{BuanMarshReinekeReitenTodorov06} in the acyclic
case and on \cite{GeissLeclercSchroeer06}
\cite{GeissLeclercSchroeer07b} \cite{BuanIyamaReitenScott07} and
\cite{Amiot08a} in the non acyclic case. As A.~Zelevinsky has kindly
informed us, many of the conjectures of \cite{FominZelevinsky07}
will be proved in \cite{DerksenWeymanZelevinsky09} in full
generality building on \cite{MarshReinekeZelevinsky03} and
\cite{DerksenWeymanZelevinsky07}.

{\bf Acknowledgments.} We thank A.~Zelevinsky for stimulating
discussions and for informing us about the ongoing work in
\cite{DerksenWeymanZelevinsky09} and \cite{Cerulli08}. We are grateful to
A.~Buan and I.~Reiten for sharing the results on dimension vectors
obtained in \cite{BuanMarshReiten08a}. We thank J.~Schr\"oer and O.~Iyama for
their interest and for motivating discussions. The first-named
author gratefully acknowledges a 5-month fellowship of the Liegrits
network (MRTN-CT 2003-505078) during which this research was carried
out. Both authors thank the referee for his great help in making
this article more readable.

\section{Recollections}

\subsection{Cluster algebras} \label{ss:ClusterAlgebras}
In this section, we recall
the construction of cluster algebras of geometric type with
coefficients from \cite{FominZelevinsky07}. For an integer $x$,
we use the notations
\[
[x]_+=\max(x,0)
\]
and
\[
\sgn(x)= \left\{ \begin{array}{ll} -1 & \mbox{if $x<0$;} \\ 0 & \mbox{if $x=0$;} \\ -1 & \mbox{if $x<0$.}
\end{array} \right.
\]

The {\em tropical semifield} on a finite family of variables $u_j$,
$j\in J$, is the abelian group (written multiplicatively) freely
generated by the $u_j$, $j\in J$, endowed with the {\em addition}
$\oplus$ defined by
\[
\prod_j u_j^{a_j} \oplus \prod_j u_j^{b_j} = \prod_j u_j^{\min(a_j,
b_j)}.
\]

Let $1\leq r\leq n$ be integers.  Let $\P$ be the tropical semifield
on the indeterminates $x_{r+1}, \ldots, x_n$. Let $\Q\P$ be the
group algebra on the abelian group $\P$. It identifies with the
algebra of Laurent polynomials with rational coefficients in the
variables $x_{r+1},\ldots , x_n$. Let $\cf$ be the field of
fractions of the ring of polynomials with coefficients in $\Q\P$ in
$r$ indeterminates. A {\em seed} in $\cf$ is a pair
$(\tilde{B},\mathbf{x})$ formed by
\begin{itemize}
\item an $n\times r$-matrix $\tilde{B}$ with integer entries
whose {\em principal part} $B$ (the submatrix formed by the first
$r$ rows) is antisymmetric;
\item a free generating set $\mathbf{x}=\{x_1, \ldots, x_r\}$
of the field $\cf$.
\end{itemize}
The matrix $\tilde{B}$ is called the {\em exchange matrix}
and the set $\mathbf{x}$ the {\em cluster} of the
seed $(\tilde{B}, \mathbf{x})$.
Let $1\leq s\leq r$ be an integer. The {\em seed mutation in the direction $s$}
transforms the seed $(\tilde{B},\mathbf{x})$ into the seed
$\mu_s(\tilde{B},\mathbf{x})=(\tilde{B}', \mathbf{x}')$, where
\begin{itemize}
\item the entries $b'_{ij}$ of $\tilde{B}'$ are given by
\[
b'_{ij} = \left\{ \begin{array}{ll} -b_{ij} & \mbox{if $i=s$ or
$j=s$; } \\
b_{ij}+ \sgn(b_{is}) [b_{is} b_{sj}]_+ & \mbox{otherwise.}
\end{array} \right.
\]
\item The cluster $\mathbf{x'}=\{x'_1, \ldots x'_r\}$ is
given by $x'_j=x_j$ for $j\neq s$ whereas $x'_s\in\cf$ is determined
by the {\em exchange relation}
\[
x'_s x_s = \prod_{i=1}^n x_i^{[b_{is}]_+} + \prod_{i=1}^n
x_i^{[-b_{is}]_+}.
\]
\end{itemize}
Mutation in a fixed direction is an involution.

Let $\T_r$ be the {\em $r$-regular tree}, whose edges are labeled by
the numbers $1,\ldots, r$ so that the $r$ edges emanating from each
vertex carry different labels. A {\em cluster pattern} is the assignment
of a seed $(\tilde{B}_t, \mathbf{x}_t)$ to each vertex $t$ of $\T_r$ such
that the seeds assigned to vertices $t$ and $t'$ linked by
an edge labeled $s$ are obtained from each other by the seed
mutation $\mu_s$.

Fix a vertex $t_0$ of the $r$-regular tree $\T_r$. Clearly, a
cluster pattern is uniquely determined by the {\em initial seed}
$(\tilde{B}_{t_0}, x_{t_0})$, which can be chosen arbitrarily.

Fix a seed $(\tilde{B}, \mathbf{x})$ and let
$(\tilde{B}_t, \mathbf{x}_t)$, $t\in \T_r$
be the unique cluster pattern with initial seed $(\tilde{B}, \mathbf{x})$.
The {\em clusters} associated with $(\tilde{B}, \mathbf{x})$ are
the sets $\mathbf{x}_t$, $t\in\T_r$. The {\em cluster variables}
are the elements of the clusters. The {\em cluster algebra}
$\ca(\tilde{B}) = \ca(\tilde{B},\mathbf{x})$ is the $\Z\P$-subalgebra of $\cf$
generated by the cluster variables. Its {\em ring of coefficients} is
$\Z\P$. It is a `cluster algebra without coefficients' if $r=n$ and
thus $\Z\P=\Z$.

\subsection{Cluster algebras from ice quivers} \label{ss:IceQuivers}
As we have seen in the
previous subsection, our cluster
algebras are given by certain integer matrices $\tilde{B}$. Such
matrices can also be encoded by `ice quivers': A {\em quiver} is a
quadruple $Q=(Q_0, Q_1, s,t)$, where $Q_0$ is a set (the set of {\em
vertices}), $Q_1$ is a set (the set of {\em arrows}) and $s$ and $t$
are two maps $Q_1 \to Q_0$ (taking an arrow to its {\em source}
respectively to its {\em target}). An {\em ice quiver} is a pair
$(Q,F)$ consisting of a quiver $Q$ and a subset $F$ of its set of
vertices ($F$ is the set of {\em frozen vertices}).

Let $(Q,F)$ be an ice quiver such that the set $Q_0$ is the set of natural
numbers from $1$ to $n$, the set $Q_1$ is finite and the set $F$ is the set of
natural numbers from $r+1$ to $n$ for some $1\leq r\leq n$.
The associated integer matrix $\tilde{B}(Q,F)$ is the $n\times r$
matrix whose entry $b_{ij}$ equals the
number of arrows from $i$ to $j$ minus the number of arrows from $j$ to $i$.
The {\em cluster algebra with coefficients $\ca(Q,F)$} is defined as the cluster
algebra $\ca(\tilde{B}(Q,F))$.
The matrix $\tilde{B}(Q,F)$ determines the ice quiver $(Q,F)$ if
\begin{itemize}
\item[1)]  $Q$ does not have loops (arrows from a vertex to itself) and
\item[2)] $Q$ does not have $2$-cycles (pairs of distinct arrows $\alpha$, $\beta$ such that
$s(\alpha)=t(\beta)$ and $t(\alpha)=s(\beta)$) and
\item[3)] there are no arrows between any vertices of $F$.
\end{itemize}
Given integers $1 \leq r\leq n$ each integer matrix $\tilde{B}$ with
antisymmetric principal part $B$ (formed by the first $r$ rows of
$\tilde{B})$, is obtained as the matrix associated with a unique ice
quiver satisfying these properties. The {\em mutation of
ice quivers} satisfying conditions 1)-3) is defined via
the mutation of the corresponding integer matrices recalled
in section~\ref{ss:ClusterAlgebras}.

\subsection{Krull-Schmidt categories} \label{ss:KrullSchmidtCategories}
An additive category {\em has split idempotents}
if each idempotent endomorphism $e$ of an object $X$  gives rise to a direct sum decomposition
$Y\oplus Z\iso X$ such that $Y$ is a kernel for $e$. A {\em Krull-Schmidt category} is an
additive category where the endomorphism rings of indecomposable objects are
local and each object decomposes into a finite direct sum of indecomposable
objects (which are then unique up to isomorphism and permutation).
Each Krull-Schmidt category has split idempotents.
We {\em write $\mathsf{indec}(\cc)$} for the set of isomorphism classes
of indecomposable objects of a Krull-Schmidt category $\cc$.

Let $\cc$ be a Krull-Schmidt category.
An object $X$ of $\cc$ is {\em basic} if every indecomposable of $\cc$
occurs with multiplicity $\leq 1$ in $X$. In this case, $X$ is fully
determined by the full additive {\em subcategory $\add(X)$} whose objects are the
direct factors of finite direct sums of copies of $X$. The map $X \mapsto \add(X)$
yields a bijection between the isomorphism classes of basic objects and
the full additive subcategories of $\cc$ which are stable under
taking direct factors and only contain finitely many indecomposables
up to ismorphism.

Let $k$ be an algebraically closed field. A {\em $k$-category} is
a category whose morphism sets are endowed with structures of
$k$-vector spaces such that the composition maps are bilinear. A
$k$-category is {\em $\Hom$-finite} if all of its morphism spaces
are finite-dimensional. A {\em $k$-linear category} is a $k$-category
which is additive. Let $\cc$ be a $k$-linear $\Hom$-finite
category with split idempotents. Then $\cc$ is a Krull-Schmidt category.
Let $\ct$ be an additive subcategory of $\cc$ stable under taking direct
factors. The {\em quiver $Q=Q(\ct)$ of $\ct$}
is defined as follows: The vertices of $Q$ are the isomorphism classes of indecomposable
objects of $\ct$ and the number of arrows from the isoclass of
$T_1$ to that of $T_2$ equals the dimension of the space of irreducible
morphisms
\[
\mathsf{irr}(T_1, T_2) = \mathsf{rad}(T_1, T_2)/\mathsf{rad}^2(T_1,T_2) \ko
\]
where $\mathsf{rad}$ denotes the {\em radical of $\ct$}, \ie the ideal such
that $\mathsf{rad}(T_1,T_2)$ is formed by all non isomorphisms from $T_1$ to $T_2$.

\subsection{$2$-Calabi-Yau triangulated categories} \label{ss:CalabiYauCategories}
Let $k$ be an algebraically closed field. Let $\cc$ be a $k$-linear
triangulated category with suspension functor $\Sigma$. We assume
that $\cc$ is {\em $\Hom$-finite} and has split idempotents.
Thus, it is a Krull-Schmidt category. For objects
$X$, $Y$ of $\cc$ and an integer $i$, we define
\[
\Ext^i(X,Y)=\cc(X,\Sigma^i Y).
\]
An object $X$ of $\cc$ is {\em rigid} if $\Ext^1(X,X)=0$.

Let $d$ be an integer. Following \cite{VandenBergh08}, \confer~also
\cite{Keller08b}, we define the category $\cc$ to be {\em
$d$-Calabi-Yau} if there exists a family of linear forms
\[
t_X : \cc(X,\Sigma^d X) \to k \ko X\in\obj(\cc)\ko
\]
such that the bilinear forms
\[
\langle, \rangle: \cc(Y, \Sigma^d X) \times \cc(X,Y) \to k \ko (f,g) \mapsto t_X(f\circ g)
\]
are non degenerate and satisfy
\[
\langle \Sigma^p f, g\rangle = (-1)^{pq} \langle \Sigma^q g, f\rangle
\]
for all $f$ in $\cc(Y, \Sigma^q X)$ and all $g\in \cc(X, \Sigma^p Y)$,
where $p+q=d$.

Let us assume that $\cc$ is $2$-Calabi-Yau. A {\em cluster-tilting
subcategory of $\cc$} is a full additive subcategory $\ct\subset
\cc$ which is stable under taking direct factors and such that
\begin{itemize}
\item for each object $X$ of $\cc$, the functors
$\cc(X,?):\ct\to \mod k$ and $\cc(?,X):\ct\op\to\mod k$
are finitely generated;
\item an object $X$ of $\cc$ belongs to $\ct$ iff we have
$\Ext^1(T,X)=0$ for all objects $T$ of $\ct$.
\end{itemize}

A {\em cluster-tilting object} is a basic object $T$ of $\cc$ such that
$\add(T)$ is a cluster-tilting subcategory. Equivalently, an object $T$
is cluster-tilting if it is rigid and if each object $X$ satisfying
$\Ext^1(T,X)=0$ belongs to $\add(T)$. The following definition is
taken from section~1 of \cite{BuanIyamaReitenScott07}. Recall that $\cc$ is
a $\Hom$-finite $k$-linear triangulated category with split
idempotents which is $2$-Calabi-Yau.

\begin{definition}[\cite{BuanIyamaReitenScott07}]
\label{def:TriangClusterStructure}
The cluster-tilting subcategories of $\cc$ {\em determine a cluster
structure on $\cc$} if the following hold:
\begin{itemize}
\item[0)] There is at least one cluster-tilting subcategory in $\cc$.
\item[1)] For each cluster-tilting subcategory $\ct'$ of
$\cc$ and each indecomposable $M$ of $\ct'$, there is a unique (up
to isomorphism) indecomposable $M^*$ not isomorphic to $M$ and such
that the additive subcategory $\ct''=\mu_M(\ct')$ of $\cc$ with
set of indecomposables
\[
\mathsf{indec}(\ct'')=\mathsf{indec}(\ct')\setminus\{M\} \cup
\{M^*\}
\]
is a cluster-tilting subcategory;
\item[2)] In the situation of 1), there are triangles
\[
\xymatrix{M^* \ar[r]^f & E \ar[r]^g & M \ar[r] & \Sigma M^*}
 \mbox{ and }
\xymatrix{M \ar[r]^s &  E' \ar[r]^t &  M^* \ar[r] &  \Sigma M^*} \ko
\]
where $g$ and $t$ are minimal right $\ct'\cap\ct''$-approximations
and $f$ and $s$ are minimal left $\ct'\cap\ct''$-approximations.
\item[3)] For any cluster-tilting subcategory $\ct'$, the quiver
$Q(\ct')$ does not have loops nor $2$-cycles.
\item[4)] We have $Q(\mu_M(\ct'))=\mu_M(Q(\ct'))$ for each cluster-tilting
subcategory $\ct'$ of $\cc$ and each indecomposable $M$ of $\ct'$.
\end{itemize}
The cluster tilting subcategory $\ct''=\mu_M(\ct')$ of 1) is
the {\em mutation of $\ct'$ at the indecomposable object $M$}.
The {\em mutation of a cluster-tilting object $T$} is defined via
the mutation of the cluster-tilting subcategory $\add(T)$.
\end{definition}

\begin{lemma} \label{lemma:Multiplicities} Suppose that the
  cluster-tilting subcategories determine a cluster structure on
  $\cc$. Then, in the situation of condition~2) of the above
  definition~\ref{def:TriangClusterStructure}, the following hold:
\begin{itemize}
\item[a)] The space $\Ext^1(M,M^*)$ is one-dimensional (hence, by the $2$-Calabi-Yau
property, so is the space $\Ext^1(M^*,M)$) and the triangles of 2) are non split.
\item[3)] The multiplicity
of an indecomposable $U$ of $\ct'\cap\ct''$ in $E$ equals the number
of arrows from $U$ to $M$ in the quiver $Q(\ct')$  and that from $M^*$ to $U$
in $Q(\ct'')$; the multiplicity of $U$ in $E'$ equals the number of
arrows from $M$ to $U$ in $Q(\ct')$ and that from $U$ to $M^*$ in
$Q(\ct'')$;
\end{itemize}
\end{lemma}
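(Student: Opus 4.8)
The plan is to prove both statements by exploiting the interplay between the two exchange triangles of condition 2) and the definition of the quiver $Q$ via irreducible maps, together with the minimality of the approximations.

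For part a), I would argue as follows. If the triangle $M^* \to E \to M \to \Sigma M^*$ were split, then $M$ would be a direct factor of $E$, which lies in $\add(\ct'\cap\ct'')$; but $M \in \ct'$ and $M \notin \ct''$ (since $M$ is replaced by $M^*$ under mutation), so $M \notin \ct'\cap\ct''$, a contradiction with the Krull-Schmidt uniqueness of decompositions. Hence the triangle is non-split, so the connecting map $M \to \Sigma M^*$ is nonzero, i.e. $\Ext^1(M,M^*) \neq 0$. To see that $\Ext^1(M,M^*)$ is one-dimensional, I would use that $\Sigma M^*$ (equivalently $M^*$) is rigid-compatible with the cluster structure. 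The cleanest route is to observe that mutation is an involution (condition 1) gives $(M^*)^* \cong M$), and to apply the second triangle $M \to E' \to M^* \to \Sigma M$ as the exchange triangle for the mutation of $\ct''$ back to $\ct'$ at $M^*$. One then shows that any two nonzero classes in $\Ext^1(M,M^*)$ give exchange triangles with isomorphic middle terms $E$ (by minimality of the approximations $f,g$, the triangle is determined up to isomorphism by the subcategory $\ct'\cap\ct''$), and the uniqueness in condition 1) forces the class to be unique up to scalar. The $2$-Calabi-Yau duality $\Ext^1(M,M^*) \cong D\Ext^1(M^*,M)$ then immediately gives one-dimensionality of $\Ext^1(M^*,M)$ as well.

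For part 3), the key input is that $g\colon E \to M$ is a \emph{minimal right $\ct'\cap\ct''$-approximation}. I would first show that $g$ is in fact a right $\ct'$-approximation (not merely $\ct'\cap\ct''$): indeed the only indecomposable of $\ct'$ outside $\ct'\cap\ct''$ is $M$ itself, and $\cc(M,M) \to \cc(M,M)$ must factor through $g$ up to the radical because the triangle is non-split (any map $M \to M$ factoring through $g$ would split it unless it lies in the radical). Granting this, a standard argument identifies the minimal right $\ct'$-approximation of $M$ with the \emph{source map}/almost split data at $M$ in $\ct'$: the multiplicity of an indecomposable $U \in \ct'\cap\ct''$ in $E$ equals $\dim\mathsf{irr}_{\ct'}(U,M)$, the dimension of the space of irreducible maps $U \to M$ in $\ct'$, which by the definition in section~\ref{ss:KrullSchmidtCategories} is precisely the number of arrows $U \to M$ in $Q(\ct')$. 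The symmetric statement for $\ct''$ — that this multiplicity equals the number of arrows $M^* \to U$ in $Q(\ct'')$ — then follows from condition 4), $Q(\mu_M(\ct')) = \mu_M(Q(\ct'))$, since mutation reverses the arrows at the mutated vertex. The claims about $E'$ are obtained dually, by applying the same reasoning to the minimal left approximation $s$ and the $2$-Calabi-Yau duality, or equivalently by running the argument for the mutation from $\ct''$ back to $\ct'$.

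\textbf{The main obstacle} I expect is pinning down precisely why the minimal right $\ct'\cap\ct''$-approximation computes the irreducible maps, i.e. identifying $\cok(\mathsf{rad}^2 \to \mathsf{rad})$ correctly. The subtlety is that an approximation being minimal (no direct summand of $E$ maps to zero, equivalently $g$ has no trivial direct summand in its cone) must be translated into the statement that the components $U \to M$ of $g$ span $\mathsf{irr}_{\ct'}(U,M)$ and that none of them is redundant modulo $\mathsf{rad}^2$. This requires care because the relevant radical is that of $\ct'$, and one must verify that the maps $E \to M$ do not factor through other objects of $\ct'$ in a way that would make them lie in $\mathsf{rad}^2$ — which is exactly where minimality and the non-split hypothesis from part a) are used. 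Once the source-map interpretation is secured, everything else is bookkeeping with the arrow-counting definition of $Q$ and the mutation rule from condition 4).
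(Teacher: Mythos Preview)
Your argument for non-splitness in part a) is fine and essentially equivalent to the paper's (the paper says ``minimality of the approximations implies that the triangles are non split'', which unwinds to the same Krull--Schmidt contradiction you give). Your outline for part~3) is also correct and is in fact more explicit than the paper, which simply says that the claim follows by combining the definition of $Q(\ct')$, $Q(\ct'')$ with the approximation properties of $f,g,s,t$; your identification of the minimal approximation with the source map computing $\dim \mathsf{irr}(U,M)$ is the right content behind that sentence.

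The genuine gap is your argument for one-dimensionality of $\Ext^1(M,M^*)$. You assert that any two nonzero classes yield triangles with isomorphic middle terms and that uniqueness in condition~1) then forces the class to be unique up to scalar. Neither step is justified: an arbitrary nonzero class $M \to \Sigma M^*$ need not produce a triangle in which $E \to M$ is a minimal $\ct'\cap\ct''$-approximation, and even when middle terms happen to be isomorphic the extension classes need not be proportional. Condition~1) only gives uniqueness of the object $M^*$, not of the extension class. The paper's approach is both shorter and uses the hypotheses more directly: apply $\cc(M,-)$ to the exchange triangle to obtain the exact sequence
\[
\cc(M,E) \to \cc(M,M) \to \Ext^1(M,M^*) \to 0,
\]
and then invoke the \emph{no-loops} hypothesis (condition~3)). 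Since there are no irreducible maps $M \to M$ in $\ct'$, every radical endomorphism of $M$ factors through some indecomposable $U \not\cong M$ of $\ct'$; such $U$ lies in $\ct'\cap\ct''$, and the approximation property of $g$ then shows the map factors through $E$. Hence the image of $\cc(M,E)$ in $\cc(M,M)$ contains the radical, so $\Ext^1(M,M^*)$ is a quotient of $\cc(M,M)/\mathsf{rad}(M,M) \cong k$ (here one uses that $k$ is algebraically closed). Combined with non-splitness this gives one-dimensionality. Note that your proposal never invokes the no-loops condition for part~a), which is the key ingredient you are missing.
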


\begin{proof} a) The first triangle yields an exact sequence
\[
\cc(M,E) \to \cc(M,M) \to \Ext^1(M,M^*) \to 0.
\]
By the absence of loops required in condition 3),
each radical morphism from $M$ to $M$ factors through $E$. Since $k$
is algebraically closed, the radical is of codimension $1$ in the
local algebra $\cc(M,M)$. Thus, the space $\Ext^1(M,M^*)$ is one-dimensional.
The minimality of the approximations implies that the triangles are non split.
b) This follows if we combine the definition of the quivers $Q(\ct')$ and $Q(\ct'')$,
with the approximation properties of $f$, $g$, $s$ and $t$.
\end{proof}

We refer to section~1, page~11 of
\cite{BuanIyamaReitenScott07} for a list of classes of examples
where this assumption holds. In particular, this list contains
the cluster categories associated with finite quivers without oriented
cycles and the stable categories of preprojective algebras of Dynkin quivers.
We refer the reader to the surveys \cite{BuanMarsh06} \cite{Reiten06} \cite{Keller08}
\cite{Keller08c}
for more information on cluster categories and to
the survey \cite{GeissLeclercSchroeer08}
for more information on stable categories of Dynkin quivers.

\subsection{Cluster characters} \label{ss:ClusterCharacters}
The notion of cluster character is due to Palu \cite{Palu08}.
Under suitable assumptions, cluster characters allow one to pass
from $2$-Calabi-Yau categories to cluster algebras. We recall
the definition and the main construction from \cite{Palu08}.
Let $k$ be an algebraically closed field and
$\cc$ a $k$-linear $\Hom$-finite triangulated category with
split idempotents which is $2$-Calabi-Yau.
Let $R$ be a commutative ring. A {\em cluster character} on $\cc$
with values in $R$ is a map $\zeta: \obj(\cc) \to R$ such that
\begin{itemize}
\item[1)] we have $\zeta(L)=\zeta(L')$ if and $L$ and $L'$ are isomorphic,
\item[2)] we have $\zeta(L\oplus M) = \zeta(L)\zeta(M)$ for all objects $L$ and $M$ and
\item[3)] if $L$ and $M$ are objects such that $\Ext^1(L,M)$ is one-dimensional
(and hence $\Ext^1(M,L)$ is one-dimensional) and
\[
L \to E \to M \to \Sigma L \mbox{ and } M \to E' \to L \to \Sigma M
\]
are non-split triangles, then we have
\begin{equation} \label{eq:ClusterCharacterExchangeEquation}
\zeta(L) \zeta(M) = \zeta(E) + \zeta(E').
\end{equation}
\end{itemize}
Assume that $\cc$ has a cluster-tilting object $T$ which is the
direct sum of $r$ pairwise non isomorphic indecomposable summands $T_1$,
\ldots $T_r$. In a vast generalization of Caldero-Chapoton's work
\cite{CalderoChapoton06}, Palu has shown in \cite{Palu08} that there
is a canonical cluster-character
\[
X^T_?: \obj(\cc) \to \Z[x_1, \ldots, x_r] \ko M \mapsto X^T_M
\]
such that $X^T_{\Sigma T_i}=x_i$ for $1\leq i \leq r$. Let us recall
Palu's construction. First we need to introduce some more notation.
Let $C$ be the endomorphism algebra of $T$. Let $\mod C$ denote the
category of $k$-finite-dimensional right $C$-modules. For each
$1\leq i \leq r$, the morphism space $\cc(T,T_i)$ becomes an
indecomposable projective right $C$-module denoted by $P_i$. Its
simple top is denoted by $S_i$. For $L$ and $M$ in $\mod C$, define
\[
\langle L, M\rangle = \dim \Hom_C(L,M) - \dim \Ext^1_C(L,M)
\]
and put
\[
\langle L, M\rangle_a = \langle L,M \rangle - \langle M,L \rangle.
\]
By Theorem~11 of \cite{Palu08}, the map $(L,M) \mapsto \langle
L,M\rangle_a$ induces a well-defined bilinear form on the
Grothendieck group $K_0(\mod C)$. By \cite{KellerReiten07}, for any
$X\in \cc$, we have triangles
\[
 T^X_1\to T^X_0\to X\to \Sigma T^X_1 \mbox{ and }
 X\to \Sigma^2 T^0_X\to \Sigma^2 T^1_X\to \Sigma X,
\]
where $T^X_1$, $T^X_0$, $T_X^0$ and $T_X^1$ belong to $\add(T)$.
The {\em index} and {\em coindex} of $X$ with respect to $T$ are defined to
be  the classes in $K_0(\add T)$
\[
 \ind_T (X)=[T^X_0]-[T^X_1] \mbox { and }
 \coind_T (X)= [T_X^0]-[T_X^1].
\]

For an object $M$ of $\cc$, one defines
\[
X^T_M = \prod_{i=1}^r x_i^{-[\operatorname{coind}_{T}(M): T_i]}
\sum_e \chi(Gr_e(\cc(T,M)) \, \prod_{i=1}^r x_i^{\langle S_i,e\rangle_a},
\]
where $e$ runs through the positive elements of the Grothendieck
group $K_0(\mod C)$ and $Gr_e(\cc(T,M))$ denotes the variety
of submodules $U$ of the right $C$-module $\cc(T,M)$ such
that the class of $U$ is $e$ and $\chi$ is the Euler characteristic
(of the underlying topological space if $k=\C$ or of $l$-adic
cohomology if $k$ is arbitrary).

\subsection{From $2$-CY categories to cluster algebras without coefficients}
In this section, we show how certain $2$-Calabi-Yau triangulated categories can be linked to
cluster algebras without coefficients via cluster characters. All we need to do is to combine the results
recalled in sections \ref{ss:CalabiYauCategories} and \ref{ss:ClusterCharacters}. In
the main part of the paper, we will concentrate on the case where our
cluster algebras do have coefficients.

Let $k$ be an algebraically closed field and $\cc$ a $\Hom$-finite
$k$-linear $2$-Calabi-Yau triangulated category with split idempotents as defined
in section~\ref{ss:CalabiYauCategories}. Let $T$ be a
cluster-tilting object in $\cc$. Assume that $T$ is the direct sum
of $r$ pairwise non isomorphic indecomposable objects $T_1$, \ldots,
$T_r$.  Let
\[
\zeta_T : \obj(\cc) \to \Q(x_1, \ldots, x_r)
\]
be Palu's cluster character associated with $T$ as recalled in
section~\ref{ss:ClusterCharacters}. In particular, we have
\begin{equation} \label{eq:ClusterCharInitialValues}
\zeta_T(\Sigma T_i)=x_i \mbox{ for } 1\leq i\leq r.
\end{equation}
Now assume that the cluster-tilting subcategories define a cluster structure
on $\cc$ (\confer section~\ref{ss:CalabiYauCategories}). A cluster-tilting
object $T'$ is {\em reachable from $T$} if $\add(T')$ is obtained
from $\add(T)$ be a finite sequence of mutations as defined in
\ref{ss:CalabiYauCategories}. A rigid object $M$ is {\em reachable
from $T$} if it lies in $\add(T')$ for a cluster-tilting object $T'$
reachable from $T$. Let $Q$ be the quiver of the endomorphism
algebra $C$ of $T$, or equivalently, the quiver of the category
$\add(T)$. We consider $Q$ as an ice quiver with empty set of frozen
vertices and denote by $\ca(Q)$ the associated cluster algebra
without coefficients (defined by specializing the construction of
\ref{ss:IceQuivers} to the case where the set of frozen vertices is
empty). It is the subalgebra of $\Q(x_1, \ldots, x_r)$ generated by
the cluster variables.

\begin{proposition} \label{prop:ClusterCharacterProperties}
Assume that the above assumptions hold and
in particular that the cluster-tilting subcategories define
a cluster-structure on $\cc$ (\confer
section~\ref{ss:CalabiYauCategories}).
\begin{itemize}
\item[a)] The map $M \mapsto \zeta_T(\Sigma M)$ induces a surjection from
the set of rigid objects reachable from $T$ onto the set of cluster
variables of the cluster algebra $\ca(Q)$.
\item[b)] The surjection of a) induces a
surjection from the set of cluster-tilting objects reachable from
$T$ onto the set of clusters of $\ca(Q)$.
\end{itemize}
\end{proposition}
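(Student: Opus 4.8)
The plan is to establish the surjection in part a) by showing that Palu's cluster character intertwines the mutation operation on cluster-tilting objects (as defined categorically in section~\ref{ss:CalabiYauCategories}) with the mutation operation on seeds (as defined combinatorially in section~\ref{ss:ClusterAlgebras}). Since every cluster variable of $\ca(Q)$ is obtained from the initial cluster $\{x_1,\ldots,x_r\}$ by a finite sequence of seed mutations, and every reachable rigid object is obtained from $T$ by a finite sequence of categorical mutations, it suffices to match up one step of each process and then proceed by induction on the length of the mutation sequence.

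First I would set up the correspondence at the initial seed. The initial cluster variables are $x_i = \zeta_T(\Sigma T_i)$ by \eqref{eq:ClusterCharInitialValues}, and the initial exchange matrix is $\tilde B(Q)$ by definition of $\ca(Q)$. Next I would carry out the inductive step. Suppose $T'$ is a reachable cluster-tilting object whose summands $\zeta_T(\Sigma T'_j)$ have already been identified with the cluster at some vertex $t\in\T_r$, and whose quiver $Q(\add T')$ matches the exchange matrix $\tilde B_t$; the latter compatibility is guaranteed by property~4) of Definition~\ref{def:TriangClusterStructure}, which states $Q(\mu_M(\ct'))=\mu_M(Q(\ct'))$, so that the categorical mutation of quivers tracks the combinatorial mutation of matrices. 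Now mutate $T'$ at an indecomposable summand $M$ to obtain $T''=\mu_M(T')$ with new summand $M^*$. By Lemma~\ref{lemma:Multiplicities}, the space $\Ext^1(M,M^*)$ is one-dimensional and the two exchange triangles
\[
M^*\to E\to M\to \Sigma M^* \quad\text{and}\quad M\to E'\to M^*\to \Sigma M
\]
are non-split, with $E$ and $E'$ belonging to $\add(T'\cap T'')$. Applying the suspension $\Sigma$ (which is a triangle equivalence, so sends these to non-split triangles with one-dimensional $\Ext^1$) and invoking property~3) of the cluster character together with multiplicativity (property~2)), I would obtain
\[
\zeta_T(\Sigma M)\,\zeta_T(\Sigma M^*)=\prod_U \zeta_T(\Sigma U)^{[E:U]}+\prod_U \zeta_T(\Sigma U)^{[E':U]},
\]
where the products run over indecomposables $U$ of $\ct'\cap\ct''$. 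The multiplicity formula in part b) of Lemma~\ref{lemma:Multiplicities} identifies the exponents $[E:U]$ and $[E':U]$ with the numbers of arrows in $Q(\add T')$, which by the inductive hypothesis equal the entries of $\tilde B_t$. Comparing this with the exchange relation for $x'_s x_s$ recalled in section~\ref{ss:ClusterAlgebras} shows that $\zeta_T(\Sigma M^*)$ is exactly the cluster variable produced by the seed mutation $\mu_s$, completing the induction. This yields the surjection of a); part b) then follows immediately, since a cluster is by construction the set of $\zeta_T(\Sigma M)$ attached to the summands $M$ of a reachable $T'$, and the correspondence above sends each reachable cluster-tilting object to the cluster at the corresponding vertex of $\T_r$, which exhausts all clusters of $\ca(Q)$.

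\textbf{The main obstacle} I anticipate is verifying that the exponents in the exchange relation come out correctly signed and that no spurious cancellation or sign discrepancy arises between the categorical and combinatorial sides. The exchange relation in section~\ref{ss:ClusterAlgebras} separates positive and negative entries of $\tilde B$ via $[b_{is}]_+$ and $[-b_{is}]_+$, whereas Lemma~\ref{lemma:Multiplicities} counts arrows $U\to M$ versus $M\to U$ (equivalently $M^*\to U$ versus $U\to M^*$) in two different quivers $Q(\ct')$ and $Q(\ct'')$; I would need to check carefully that these two descriptions of the middle terms $E$ and $E'$ are genuinely compatible with the sign conventions, so that the matching of exchange relations is exact rather than merely up to relabeling. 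A secondary subtlety is confirming that the mutation of quivers on the categorical side produces no loops or $2$-cycles so that $\tilde B_t$ determines the quiver uniquely (guaranteed by property~3 of the cluster structure), keeping the induction well-defined at every stage.
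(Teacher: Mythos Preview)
Your proposal is correct and follows essentially the same approach as the paper: an induction along mutation sequences (the paper phrases it as an assignment of cluster-tilting objects to the vertices of the regular tree $\T_r$) showing that $\zeta_T(\Sigma\,\cdot)$ carries each reachable cluster-tilting object to the corresponding cluster, using property~4) of the cluster structure to keep the exchange matrices aligned and Lemma~\ref{lemma:Multiplicities} together with the cluster-character axioms to reproduce the exchange relation. The only cosmetic difference is that the paper derives a) from b) rather than the reverse, but the inductive core is identical.
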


\begin{proof} Clearly, part a) follows from part b). Let us prove part b).
Let $\T_r$ be the $r$-regular tree and let $t_0$ be a fixed vertex of
$\T_r$. Let $B$ be the antisymmetric matrix associated with the
quiver $Q$ and let $\mathbf{x}$ be the initial cluster $x_1, \ldots, x_r$.
Let $(B_t, \mathbf{x}_t)$, $t\in \T_r$, be the unique cluster pattern
with initial seed $(B_{t_0}, \mathbf{x}_{t_0})=(B,\mathbf{x})$
(\confer section~\ref{ss:ClusterAlgebras}). To each
vertex $t$ of $\T$, we assign a cluster-tilting object $T_t$ with
indecomposable direct summands $T_{t,1}$, \ldots, $T_{t,r}$ such that
\begin{itemize}
\item[1)] we have $T_{t_0}=T$ and
\item[2)] if $t$ is linked to $t'$ by an edge labeled $s$, then $T_{t'}$
is obtained from $T_t$ by mutating at the summand $T_{t,s}$.
\end{itemize}
It follows from point 1) of the definition of a cluster structure that
$T_t$ is well-defined for each vertex $t$ of $\T$. Moreover, it follows
from point 4) of the same definition that for each vertex $t$ of $\T$,
the antisymmetric matrix $B_t$ corresponds to the quiver of the category
$\add(T_t)$ under the bijection of section~\ref{ss:IceQuivers}. We
claim that for each vertex $t$ of $\T$, the cluster character takes
the shift $\Sigma T_{t,i}$ of the indecomposable direct summand $T_{t,i}$
of $T_t$ to the cluster variable $x_{t,i}$, $1 \leq i\leq r$. Indeed, this
holds for $t=t_0$ by equation~(\ref{eq:ClusterCharInitialValues}).
Now assume that it holds for some vertex $t$ and that $t$ is linked
to a vertex $t'$ by an edge labeled $s$. We know that the indecomposable summands
of $T_{t'}$ are the $T_{t',i}=T_{t,i}$ for $i\neq s$ and a new summand $T'_{t,s}$
which is not isomorphic to $T_{t,s}$. By part a) of lemma~\ref{lemma:Multiplicities},
the extension space between $T_{t,s}$ and $T_{t',s}$
is one-dimensional and we have non split triangles
\[
T_{t',s} \to E \to T_{t,s} \to \Sigma T_{t',s} \mbox{ and }
T_{t,s} \to E' \to T_{t',s} \to \Sigma T_{t,s}.
\]
Here, the middle terms are sums of copies of the $T_{t,i}$, $i\neq s$,
and the multiplicities are determined by the quivers of the
endomorphism algebras of $T$ and $T'$, as indicated in part b)
of lemma~\ref{lemma:Multiplicities}. More precisely, if $b_{ij}^t$ denotes the $(i,j)$-entry of
the exchange matrix, the summand $T_{t,i}$ occurs in $E$ with multiplicity
$[b_{is}^t]_+$ and in $E'$ with multiplicity $[b_{si}^t]_+=[-b_{is}^t]_+$. Now
if we use points 2) and 3) of the definition
of a cluster character, we see that the induction hypothesis
and equation~(\ref{eq:ClusterCharacterExchangeEquation})
yield the exchange relation
\[
x_{t,s} \zeta_T(\Sigma T_{t',s}) =\prod_{i=1}^n x_i^{[b^t_{ik}]_+} + \prod_{i=1}^n
x_i^{[-b^t_{ik}]_+}.
\]
Thus, we have $\zeta_T(\Sigma T_{t',s}) = x_{t',s}$ as required.
\end{proof}

\subsection{Frobenius categories} \label{ss:FrobeniusCategories}
A {\em Frobenius category} is an exact category
in the sense of Quillen \cite{Quillen73} which has enough projectives and enough injectives
and where an object is projective iff it is injective. By definition,
such a category is endowed with a class of admissible exact sequences
\[
0 \to L \to M \to N \to 0.
\]
Following \cite{GabrielRoiter92} we will call the left morphism
$L\to M$ of such a sequence an {\em inflation}, the right morphism a
{\em deflation} and, sometimes, the whole sequence a {\em
conflation}. Let $\ce$ be a Frobenius category. Its associated
stable category $\ul{\ce}$ is the quotient of $\ce$ by the ideal of
morphisms factoring through  a projective-injective object. It was
shown by Happel \cite{Happel87} that $\ul{\ce}$ has a canonical
structure of triangulated category. We have
\[
\Ext^i_\ce(L,M) \iso \Ext^i_{\ul{\ce}}(L,M)
\]
for all objects $L$ and $M$ of $\ce$ and all integers $i\geq 1$.
An object $M$ of $\ce$ is {\em rigid} if $\Ext^1_\ce(M,M)=0$.

Let $k$ be an algebraically closed field and $\ce$ a $\Hom$-finite
Frobenius category with split idempotents.
Suppose that $\ce$ is a {\em $2$-Calabi-Yau Frobenius category}, \ie its associated stable
category $\cc=\ul{\ce}$ is $2$-Calabi-Yau in the sense of section~\ref{ss:CalabiYauCategories}.
A {\em cluster-tilting subcategory of $\ce$} is a full additive subcategory $\ct\subset
\ce$ which is stable under taking direct factors and such that
\begin{itemize}
\item for each object $X$ of $\ce$, the functors
$\ce(X,?):\ct\to \mod k$ and $\ce(?,X):\ct\op\to\mod k$
are finitely generated;
\item an object $X$ of $\ce$ belongs to $\ct$ iff we have
$\Ext^1_\ce(T,X)=0$ for all objects $T$ of $\ct$.
\end{itemize}
Clearly if these conditions hold, each projective-injective
object of $\ce$ belongs to $\ct$.
A {\em cluster-tilting object} is a basic object $T$ of $\ce$ such that
$\add(T)$ is a cluster-tilting subcategory. Equivalently, an object $T$
is cluster-tilting if it is rigid and if each object $X$ satisfying
$\Ext^1_\ce(T,X)=0$ belongs to $\add(T)$. The following definition
is taken from section~1 of \cite{BuanIyamaReitenScott07}. Recall
that $\ce$ is a $k$-linear $\Hom$-finite Frobenius category with
split idempotents such that the associated stable category
$\cc=\ul{\ce}$ is $2$-Calabi-Yau.

\begin{definition}[\cite{BuanIyamaReitenScott07}]
\label{def:ExactClusterStructure}
The cluster-tilting subcategories of $\ce$ {\em determine a cluster
structure on $\ce$} if the following hold:
\begin{itemize}
\item[0)] There is at least one cluster-tilting subcategory in $\ce$.
\item[1)] For each cluster-tilting subcategory $\ct'$ of $\ce$ and
  each non projective indecomposable $M$ of $\ct'$, there is a unique
  (up to isomorphism) non projective indecomposable $M^*$ not
  isomorphic to $M$ and such that the additive subcategory
  $\ct''=\mu_M(\ct')$ of $\ce$ with set of indecomposables
\[
\mathsf{indec}(\ct'')=\mathsf{indec}(\ct')\setminus\{M\} \cup
\{M^*\}
\]
is a cluster-tilting subcategory;
\item[2)] In the situation of 1), there are conflations
\[
\xymatrix{0 \ar[r] & M^* \ar[r]^f & E   \ar[r]^g & M \ar[r] & 0}
 \mbox{ and }
\xymatrix{0 \ar[r] & M \ar[r]^s   &  E' \ar[r]^t &  M^* \ar[r] &  0} \ko
\]
where $g$ and $t$ are minimal right $\ct'\cap\ct''$-approximations
and $f$ and $s$ are minimal left $\ct'\cap\ct''$-approximations.
\item[3)] For any cluster-tilting subcategory $\ct'$, the quiver
$Q(\ct')$ does not have loops nor $2$-cycles.
\item[4)] We have $Q^\circ(\mu_M(\ct'))=\mu_M(Q^\circ(\ct'))$ for each
  cluster-tilting subcategory $\ct'$ of $\ce$ and each non projective
  indecomposable $M$ of $\ct'$, where $Q^\circ(\ct')$ denotes the
  quiver obtained from $Q(\ct')$ be removing all arrows between
  projective vertices.
\end{itemize}
The cluster tilting subcategory $\ct''=\mu_M(\ct')$ of 1) is
the {\em mutation of $\ct'$ at the non projective indecomposable object $M$}.
The {\em mutation of a cluster-tilting object $T$} is defined via
the mutation of the cluster-tilting subcategory $\add(T)$.
\end{definition}

\begin{lemma} \label{lemma:ExactMultiplicities} Suppose that the
cluster-tilting subcategories determine a cluster structure on
$\ce$. Then, in the situation of condition~2) of the above
definition~\ref{def:ExactClusterStructure}, the following hold:
\begin{itemize}
\item[a)] The space $\Ext^1(M,M^*)$ is one-dimensional (hence, by the $2$-Calabi-Yau
property, so is the space $\Ext^1(M^*,M)$) and the conflations of 2) are non split.
\item[b)] The multiplicity
of an indecomposable $U$ of $\ct'\cap\ct''$ in $E$ equals the number
of arrows from $U$ to $M$ in the quiver $Q(\ct')$  and that from $M^*$ to $U$
in $Q(\ct'')$; the multiplicity of $U$ in $E'$ equals the number of
arrows from $M$ to $U$ in $Q(\ct')$ and that from $U$ to $M^*$ in
$Q(\ct'')$;
\end{itemize}
\end{lemma}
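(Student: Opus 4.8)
The plan is to follow the proof of Lemma~\ref{lemma:Multiplicities} as closely as possible, handling statement a) by transporting the computation through the isomorphism $\Ext^1_\ce(L,M)\iso\Ext^1_{\ul\ce}(L,M)$, and treating statement b) directly inside the Frobenius category $\ce$. Throughout, observe that $M$ and $M^*$ are non projective indecomposables, hence nonzero indecomposable objects of the $2$-Calabi-Yau stable category $\cc=\ul\ce$, and that the two conflations of condition~2) of Definition~\ref{def:ExactClusterStructure} induce non split triangles in $\cc$.

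For a), apply the functor $\ce(M,-)$ to the first conflation to obtain an exact sequence
\[
\ce(M,E)\ra\ce(M,M)\ra\Ext^1_\ce(M,M^*)\ra 0.
\]
Exactly as in Lemma~\ref{lemma:Multiplicities}, the absence of loops required in condition~3) ensures that every radical morphism from $M$ to $M$ factors through $g\colon E\ra M$; since $k$ is algebraically closed, the radical then has codimension $1$ in the local algebra $\ce(M,M)$, so $\Ext^1_\ce(M,M^*)$ is one-dimensional. Transporting this through the isomorphism $\Ext^1_\ce\iso\Ext^1_{\ul\ce}$ and invoking the $2$-Calabi-Yau property of $\cc$, we get that $\Ext^1(M^*,M)$ is one-dimensional as well. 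Finally the minimality of the approximations $f,g,s,t$ shows that the conflations cannot split.

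For b) I would argue directly in $\ce$. The key input is the identification of minimal approximations with sink and source maps: once condition~3) removes the loop at $M$, the minimal right $\ct'\cap\ct''$-approximation $g\colon E\ra M$ has no direct summand isomorphic to $M$, so it coincides with the minimal right almost split morphism (the sink map) of $M$ in $\ct'$. Consequently the multiplicity of an indecomposable $U$ of $\ct'\cap\ct''$ in $E$ equals $\dim\mathsf{irr}(U,M)$, which by the definition of $Q(\ct')$ is the number of arrows from $U$ to $M$. Reading the same conflation through the minimal left $\ct'\cap\ct''$-approximation $f\colon M^*\ra E$, viewed now inside $\ct''$, rewrites this multiplicity as the number of arrows from $M^*$ to $U$ in $Q(\ct'')$; applying the argument to $t$ and $s$ in the second conflation handles $E'$ in the same way.

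The step I expect to be the main obstacle is the bookkeeping around projective-injective objects. These belong to every cluster-tilting subcategory, hence to $\ct'\cap\ct''$, and may therefore occur among the indecomposables $U$ in b); one must verify that the sink/source-map computation, and hence the arrow count in the full quiver $Q(\ct')$ rather than in the reduced quiver $Q^\circ$ appearing in condition~4), remains correct when $U$ is projective-injective. This is precisely where the Frobenius statement departs from Lemma~\ref{lemma:Multiplicities}, since passing to the stable category $\cc=\ul\ce$ would annihilate exactly those summands whose multiplicities b) is meant to record, so the reduction to the triangulated lemma is unavailable here.
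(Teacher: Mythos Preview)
Your proposal is correct and follows exactly the approach the paper indicates: the paper omits the proof entirely, stating only that it is ``entirely parallel to that of lemma~\ref{lemma:Multiplicities}'', and you have supplied precisely that parallel argument. Your flagged concern about projective-injective summands $U$ is well-placed but is already resolved by your own argument, since the multiplicity computation in b) is carried out entirely inside $\ce$ via the approximation properties of $f,g,s,t$ and the definition of $Q(\ct')$, without ever passing to the stable category.
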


We omit the proof of the lemma since it is entirely parallel to that
of lemma~\ref{lemma:Multiplicities}. Large classes of examples of
Frobenius categories where the cluster-tilting objects define a
cluster-structure are obtained in \cite{GeissLeclercSchroeer07b} and
\cite{BuanIyamaReitenSmith08}, \confer~the survey
\cite{GeissLeclercSchroeer08} and
example~\ref{example:Frobenius2CYRealization} below. For an
extension of the theory from the antisymmetric to the
antisymmetrizable case, we refer to \cite{Demonet08}.

\section{Cluster characters for 2-Calabi-Yau Frobenius categories}
\label{CC2FC}

Let $k$ be an algebraically closed field and $\ce$ a $k$-linear
Frobenius category with split idempotents. We assume that $\ce$ is
$\Hom$-finite and that the stable category $\cc=\ul{\ce}$ is $2$-Calabi-Yau
(\confer section~\ref{ss:CalabiYauCategories}).

\begin{definition} \label{def:ExactClusterCharacter}
A {\em cluster character on $\ce$
with values in a commutative ring $R$} is a map $\zeta: \obj(\ce) \to R$ such that
\begin{itemize}
\item[1)] we have $\zeta(L)=\zeta(L')$ if and $L$ and $L'$ are isomorphic,
\item[2)] we have $\zeta(L\oplus M) = \zeta(L)\zeta(M)$ for all objects $L$ and $M$ and
\item[3)] if $L$ and $M$ are objects such that $\Ext^1_\ce(L,M)$ is one-dimensional
(and hence $\Ext^1_\ce(M,L)$ is one-dimensional) and
\[
0\to L \to E \to M \to 0 \mbox{ and } 0\to M \to E' \to L \to 0
\]
are non-split triangles, then we have
\begin{equation} \label{eq:ExactClusterCharacterExchangeEquation}
\zeta(L) \zeta(M) = \zeta(E) + \zeta(E').
\end{equation}
\end{itemize}
\end{definition}

From now on, we assume in addition that $\ce$ contains a cluster-tilting object $T$.
Using $T$ we will construct a cluster character on $\ce$ and link it
to Palu's cluster character associated with the image of $T$ in the
triangulated category $\cc=\ul{\ce}$ (\confer section~\ref{ss:ClusterCharacters}).

Let $C$ be the endomorphism algebra of $T$ (in $\ce$) and $\ul{C}=\End_{\cc}(T)$. Let
\[
F=\Hom_\ce(T,?) : \ce \to \mod C,
\]
\[
 G=\Hom_\cc(T,?) : \cc \to \mod \ul{C}.
\]
Let $T_i$, $1\leq i\leq n$, be the pairwise non isomorphic
indecomposable direct summands of $T$. We choose the numbering of
the $T_i$ so that $T_i$ is projective exactly for $r<i\leq n$ for
some integer $1\leq r\leq n$. For $1\leq i\leq n$, let $S_i$ be the
top of the indecomposable projective $P_i=FT_i$. Note that $C$ and
$\ul{C}$ are finite dimensional $k$-algebras, so finitely presented
modules are the same as finitely generated modules. As in section~4
of \cite{KellerReiten07}, we identify $\Mod \ul{C}$ with the full
subcategory of $\Mod C$ formed by the modules without composition
factors isomorphic to one of the $S_i$, $r< i\leq n$. Let $\cd C$ be
the unbounded derived category of the abelian category $\Mod C$ of
all right $C$-modules. Let $\per C$ be the perfect derived category
of $C$, \ie the full subcategory of $\cd C$ whose objects are all
the complexes quasi-isomorphic to bounded complexes of finitely
generated projective $C$-modules. Let $\cd^b(\mod C)$ the bounded
derived category of $\mod C$ identified with the full subcategory of
$\cd C$ whose objects are all complexes whose total homology is
finite-dimensional over $k$. As shown in section~4 of
\cite{KellerReiten07}, we have the following embeddings
\[
 \mod \ul{C} \hookrightarrow \per C \hookrightarrow \cd^b(\mod C).
\]
We have a bilinear form
\[
 \langle \ , \ \rangle : K_0(\per C) \times K_0(\cd^b(\mod C))\longrightarrow \Z
\]
defined by
\[
 \langle [P], [X]\rangle  = \sum (-1)^i \mbox{dim}\ \Hom_{\cd^b(\mod C)}(P, \Sigma^i X),
\]
where $K_0(\per C)$ (resp. $K_0(\cd^b(\mod C))$) is the Grothendieck group of $\per C$
(resp. $\cd^b(\mod C) $) and  $\Sigma$ is the shift functor of $\cd^b(\mod C)$.

For arbitrary finitely generated $C$-modules $L$ and $N$, put
\[
[L,N]= \, ^0[L,N]=\mbox{dim}_k\Hom_C(L,N)\  \  \mbox{and} \ \
 ^i[L,N]=\mbox{dim}_k\Ext_C^i(L,N) \ \mbox{for}\ \ i\geq 1.
\]
Let
\[
 \langle L, N\rangle_{\tau} =  [L,N]- \, ^1[L,N]\ \mbox{and} \ \langle L,N\rangle_3 = \sum_{i=0}^3\
(-1)^i \ ^i[L,N]
\]
be the truncated Euler forms on the split Grothendieck group $K_0^{sp}(\mod C)$.
By the proposition below, if $L$ is a $\ul{C}$-module, then $\langle L, N\rangle_3$
only depends on the dimension vector $\dimv \ L$ in $K_0(\mod C)$. We put
\[
 \langle \dimv \ L, N\rangle_3 = \langle L, N \rangle_3.
\]

\begin{proposition}
\label{prop1}
\begin{itemize}
\item [a)]  The restriction of the map
\[
 K_0(\per C) \longrightarrow K_0(D^b(\mod C))=K_0(\mod C)
\]
induced by the inclusion of $\per C$ into $\cd^b(\mod C)$
to the subgroup generated by the $[S_i]$, $1\leq i\leq r$, is injective.
\item [b)] If $L$, $N$ are two $\ul{C}$-modules such that $\dimv \ L = \dimv \ N$ in $K_0(\mod C)$, then
\[
\langle L, Y\rangle_3 = \langle N, Y\rangle_3
\]
for each finitely generated $C$-module $Y$.
\end{itemize}
\end{proposition}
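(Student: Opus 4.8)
For part a), I would first use the embedding $\mod\ul C\hookrightarrow\per C$ recalled above to place each simple $\ul C$-module $S_i$, $1\le i\le r$, in $\per C$, so that the classes $[S_i]\in K_0(\per C)$ are defined. Under the map of a) these go to the classes of the $S_i$ in $K_0(\mod C)$, because the identification $K_0(\cd^b(\mod C))\iso K_0(\mod C)$ sends a complex to the alternating sum of its homologies and $S_i$ is concentrated in degree $0$. Since $[S_1],\dots,[S_n]$ is the standard basis of $K_0(\mod C)$, the classes $[S_1],\dots,[S_r]$ are linearly independent there, so an integral combination $\sum_{i=1}^r a_i[S_i]$ lands in the kernel only when all $a_i=0$; this is the asserted injectivity.

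For part b) the plan is to trade the truncated form for the honest Euler pairing and then invoke a). The decisive input, from section~4 of \cite{KellerReiten07}, is that every finite-dimensional $\ul C$-module $L$ has projective dimension at most $3$ over $C$; hence ${}^i[L,Y]=0$ for $i>3$ and
\[
\langle L,Y\rangle_3=\sum_{i\ge 0}(-1)^i\,{}^i[L,Y]=\langle[L],[Y]\rangle,
\]
the value of the bilinear form on $K_0(\per C)\times K_0(\cd^b(\mod C))$ introduced earlier, evaluated at $[L]$ and $[Y]$. It therefore suffices to show that the class $[L]\in K_0(\per C)$ is determined by $\dimv L\in K_0(\mod C)$.

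To see this I would filter $L$ by a composition series; its subquotients lie among $S_1,\dots,S_r$, each of them perfect, so every short exact sequence in the filtration is a triangle in $\per C$, and passing to classes gives $[L]=\sum_{i=1}^r m_i[S_i]$ with $m_i$ the $i$-th coordinate of $\dimv L$. Thus $[L]$ lies in the subgroup generated by the $[S_i]$, $1\le i\le r$, and maps to $\dimv L$ under $K_0(\per C)\to K_0(\mod C)$; by the injectivity of part a) it is the unique such element, so it depends only on $\dimv L$. Applying this to $L$ and $N$ gives $[L]=[N]$ whenever $\dimv L=\dimv N$, whence $\langle L,Y\rangle_3=\langle N,Y\rangle_3$ for all $Y$. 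The one genuinely nontrivial ingredient is the projective-dimension bound: it is precisely here that the $2$-Calabi--Yau hypothesis enters, through the resolutions over $C$ assembled from the index and coindex triangles in \cite{KellerReiten07}; the remainder is formal manipulation in the two Grothendieck groups.
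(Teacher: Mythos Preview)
Your argument is correct and, at the level of underlying ideas, matches the paper's: both reduce b) to the statement that the class $[L]\in K_0(\per C)$ of a finite-dimensional $\ul C$-module depends only on $\dimv L$, obtained from a composition series together with the bound $\mathrm{pd}_C L\le 3$ from \cite{KellerReiten07}. The difference is in packaging. For a), the paper actually proves the stronger assertion that $\dimv L=\dimv N$ forces $[L]=[N]$ in $K_0(\per C)$, by explicitly splicing minimal projective resolutions of the simple factors along the composition series to build resolutions $P^L$, $P^N$ with identical terms; you instead observe directly that the images of $[S_1],\dots,[S_r]$ lie in the standard basis of $K_0(\mod C)$, which is shorter. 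For b), your use of additivity of $K_0$ on triangles is the abstract version of the paper's resolution-splicing; both yield $[L]=\sum_i m_i[S_i]$ in $K_0(\per C)$. One small remark: once you have this formula, the appeal to part a) for uniqueness is superfluous, since the $m_i$ are already the coordinates of $\dimv L$.
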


\begin{proof} a) We need to show that for arbitrary finitely generated
$\ul{C}$-modules $L$, $N$
with $\ul{\mbox{dim}}\ L = \ul{\mbox{dim}}\ N$, we have $[L]=[N]$ in $K_0(\per C)$.
 Let
\[
 0=L_s\subset L_{s-1}\subset  \cdots \subset L_0=L
\]
and
\[
 0=N_s\subset N_{s-1}\subset  \cdots \subset N_0=N
\]
be composition series of $L$ and $N$ respectively. By
\cite{KellerReiten07},  we know that every $\ul{C}$-module has
projective dimension at most $3$ in $\mod C$. Assume for simplicity
that $L_{s-1}=S_1$, $L_{s-2}/L_{s-1}=S_2$. Denote by $P_i^{*}$ a
minimal  projective resolution of $S_i$. Then we have the following
commutative diagram
\[
\xymatrix{0 \ar[r]  & P_1^3 \ar[r] \ar[d] & P_1^2 \ar[r] \ar[d] & P_1^1 \ar[r]  \ar[d] & P_1^0 \ar[r] \ar[d] & L_{s-1} \ar[r] \ar[d] & 0\\
0\ar[r] & P_1^3\oplus P_2^3 \ar[r] \ar[d] & P_1^2\oplus P_2^2 \ar[r] \ar[d] & P_1^1\oplus P_2^1 \ar[r] \ar[d]& P_1^0\oplus  P_2^0 \ar[r] \ar[d] & L_{s-2} \ar[r] \ar[d] & 0\\
0\ar[r] & P_2^3 \ar[r] & P_2^2 \ar[r] & P_2^1 \ar[r] &  P_2^0 \ar[r] & L_{s-2}/L_{s-1} \ar[r] & 0}
\]
where the middle term is a projective resolution of $L_{s-2}$.
In this way, we inductively construct projective resolutions for $L$ and $N$.
If $m_i$ is the multiplicity of $S_i$ in the composition factors of $L$ and $N$,
then we obtain projective resolutions of $L$ and $N$ of the form
\[
 0\to \bigoplus_{i=1}^r(P_i^3)^{m_i}\xrightarrow{f_3}\bigoplus_{i=1}^r(P_i^2)^{m_i}
 \xrightarrow{f_2}\bigoplus_{i=1}^r(P_i^1)^{m_i}\xrightarrow{f_1}\bigoplus_{i=1}^r(P_i^0)^{m_i}\to L\to 0,
\]
\[
0\to \bigoplus_{i=1}^r(P_i^3)^{m_i}\xrightarrow{g_3}\bigoplus_{i=1}^r(P_i^2)^{m_i}
\xrightarrow{g_2}\bigoplus_{i=1}^r(P_i^1)^{m_i}\xrightarrow{g_1}\bigoplus_{i=1}^r(P_i^0)^{m_i}\to N\to 0.
\]
Let $P^L$ ({\em resp.} $P^N$) be the projective resolution complex of $L$ ({\em resp.} $N$).
We have $L\cong P^L$ and $N\cong P^N$ in $\per C$, which implies $[L]=[P^L]=[P^N]=[N]$
in $K_0(\per C)$.

b) We have
\[
\langle L, Y\rangle_3 = \langle P^L, Y \rangle = \langle [P^L], [Y]\rangle,
  \]
\[
 \langle N, Y\rangle_3 = \langle P^N, Y \rangle = \langle [P^N], [Y]\rangle.
\]
By a), we have $[P^L]=[P^N]$ in $K_0(\per C)$, which implies the equality.
\end{proof}

One should note that the truncated Euler form $\langle \ , \ \rangle_3$
does not descend to the Grothendieck group $K_0(\mod C)$ in general
(except if the global dimension of $C$ is not greater than 3),
\confer remark~\ref{remark:gldim3}.
\vspace{0.5cm}

Using the bilinear forms introduced so far,
for $M\in\ce$, we define the Laurent polynomial
\[
X'_M = \prod_{i=1}^n x_i^{\langle FM, S_i\rangle_{\tau}}
\sum_e \chi(Gr_e(\Ext^1_\ce(T,M))) \, \prod_{i=1}^n x_i^{-\langle e,S_i\rangle_3}.
\]
Here we consider $\Ext^1_\ce(T,M)$ as a right $C$-module via the natural
action of $C=\End_\ce(T)$ on the first argument; the sum ranges over all
the elements of the Grothendieck group; for a $C$-module $L$, the notation $Gr_e(L)$ denotes the
projective variety of submodules of $L$ whose class in the Grothendieck group is $e$; for
an algebraic variety $V$, the notation $\chi(V)$ denotes the Euler characteristic
(of the underlying topological space of $V$ if $k=\C$ and of $l$-adic
cohomology if $k$ is arbitrary).

Since $\cc$ is 2-Calabi-Yau, the object $\ul{T}=\oplus_{i=1}^r T_i$ is a
cluster tilting object of $\cc$. For an object $M$ of $\cc$, put
\[
X_M = X^{\ul{T}}_M \ko
\]
where $M \mapsto X^{\ul{T}}_M$ is Palu's cluster character associated
with the cluster-tilting object $\ul{T}$, \confer section~\ref{ss:ClusterCharacters}.

The following theorem shows that $M\mapsto X'_M$ is a cluster character
on $\ce$ and that, if we specialize the `coefficients' $x_{r+1}, \ldots, x_n$
to $1$, it specializes to the composition of Palu's cluster character
$M \mapsto X_M$ with the suspension functor $M \mapsto \Sigma M$.
Notice that this theorem does not involve cluster algebras (but paves
the way for establishing a link with cluster algebras when $\ce$
admits a cluster structure, \confer~Theorem~\ref{thm2} below).
\begin{theorem}\label{thm1}
As above, let $k$ be an algebraically closed field and $\ce$ a $k$-linear
Frobenius category with split idempotents such that
$\ce$ is $\Hom$-finite, the stable category $\cc=\ul{\ce}$ is $2$-Calabi-Yau
and $\ce$ contains a cluster-tilting object $T$. For an
object $M$ of $\ce$, let $X'_M$ and $X_M$ be the Laurent
polynomials defined above.
\begin{itemize}
\item[a)] We have $X'_{T_i}=x_i$ for $1\leq i \leq n$.
\item[b)] The specialization of $X'_M$ at $x_{r+1}=x_{r+2}= \ldots = 1$
is $X_{\Sigma M}$, where $\Sigma$ is the suspension of $\cc$.
\item[c)] For any two objects $L$ and $M$ of $\ce$, we have
$X'_{L\oplus M} = X'_L X'_M$.
\item[d)] If $L$ and $M$ are objects of $\ce$ such that $\Ext_\ce^1(L,M)$
is one-dimensional and we have non split conflations
\[
0 \to L \to E \to M \to 0 \mbox{ and } 0 \to M \to E' \to L \to 0  \ko
\]
then we have
\[
X'_L X'_M = X'_E + X'_{E'}.
\]
\end{itemize}
\end{theorem}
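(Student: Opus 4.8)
For part (a), I would compute $X'_{T_i}$ directly from the definition. Since $T_i\in\add(T)$, we have $\Ext^1_\ce(T,T_i)=0$, so $Gr_e(\Ext^1_\ce(T,T_i))$ is non-empty only for $e=0$, contributing $\chi(\mathrm{pt})=1$. Thus $X'_{T_i}=\prod_j x_j^{\langle FT_i,S_j\rangle_\tau}$, and I would verify $\langle FT_i,S_j\rangle_\tau=\langle P_i,S_j\rangle_\tau=\delta_{ij}$ using that $FT_i=P_i$ is the indecomposable projective, so $\Hom_C(P_i,S_j)=\delta_{ij}k$ and $\Ext^1_C(P_i,S_j)=0$.

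For part (b), I would specialize $x_{r+1}=\cdots=x_n=1$ and compare with Palu's formula for $X^{\ul T}_{\Sigma M}$ from Section~\ref{ss:ClusterCharacters}. The key is to relate the ingredients of $X'_M$ living over $C$ to those of $X_{\Sigma M}$ living over $\ul C$. Here I would use the identification of $\Mod\ul C$ as the subcategory of $\Mod C$ of modules without composition factors $S_i$, $r<i\le n$, together with \cite{KellerReiten07}. The submodule Grassmannian $Gr_e(\Ext^1_\ce(T,M))$ must be matched with $Gr_e(\cc(\ul T,\Sigma M))$; since $\Ext^1_\ce(T,M)\iso\Ext^1_\cc(T,M)=\cc(T,\Sigma M)$ as $\ul C$-modules, and the specialization kills precisely the projective coordinates $x_{r+1},\ldots,x_n$, the exponents $-\langle e,S_i\rangle_3$ for $1\le i\le r$ should reduce to Palu's $\langle S_i,e\rangle_a$ (using that $\langle\,,\,\rangle_3$ computes a genuine Euler form on $\ul C$-modules by Proposition~\ref{prop1}), while the prefactor exponents $\langle FM,S_i\rangle_\tau$ should match the coindex term $-[\coind_{\ul T}(\Sigma M):T_i]$.

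Part (c) is immediate from the additivity of all the ingredients: $F$, $\Ext^1_\ce(T,?)$, and the bilinear forms are additive on direct sums, and the Grassmannian of $L\oplus M$ factors up to Euler characteristic as a product via the fibration $Gr_e(A\oplus B)\to\coprod Gr_{e'}(A)\times Gr_{e''}(B)$, whose fibers have Euler characteristic behaving multiplicatively (this is the standard computation underlying multiplicativity of cluster characters). I would invoke this product decomposition and the identity $\chi(Gr_e(A\oplus B))=\sum_{e'+e''=e}\chi(Gr_{e'}(A))\chi(Gr_{e''}(B))$ together with additivity of the exponents to factor the whole expression.

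\textbf{The main obstacle is part (d).} This is the genuine theorem, and the strategy I would follow is the one pioneered by Caldero-Chapoton and extended by Palu: reduce the Laurent-polynomial identity to a pointwise statement about Euler characteristics of submodule Grassmannians. Applying $F=\Hom_\ce(T,?)$ and $\Ext^1_\ce(T,?)$ to the two conflations yields long exact sequences of $C$-modules, and the hypothesis that $\Ext^1_\ce(L,M)$ is one-dimensional controls the connecting maps. The heart of the argument is to establish, for each dimension vector $e$, an equality of Euler characteristics
\[
\chi(Gr_e(\Ext^1_\ce(T,E)))+\chi(Gr_e(\Ext^1_\ce(T,E')))
\]
against the corresponding sum of products of Grassmannians for $L$ and $M$, after matching the monomial prefactors. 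I expect the exponent bookkeeping—showing that the prefactors $\prod x_i^{\langle FE,S_i\rangle_\tau}$ and $\prod x_i^{\langle FE',S_i\rangle_\tau}$ combine correctly with the index/coindex shift coming from the conflations—to require careful use of the additivity of $\langle\,,\,\rangle_\tau$ along the four-term exact sequences and of Proposition~\ref{prop1}(b) to pass the form $\langle\,,\,\rangle_3$ through the $\ul C$-module structure. The geometric input, a decomposition of $Gr_e$ of the middle terms into pieces indexed by the induced maps, is where the genuine difficulty lies; I would either adapt Palu's dévissage of the Grassmannian directly or, more efficiently, deduce (d) in the stable category via part (b) and Palu's theorem, then lift the identity back to $\ce$ by tracking the projective summands through the comparison of conflations in $\ce$ with triangles in $\cc$.
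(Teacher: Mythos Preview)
Your outlines for (a), (b), (c), and your first route for (d)—importing Palu's d\'evissage of the submodule Grassmannians and then redoing the exponent bookkeeping with the truncated forms $\langle\,,\,\rangle_\tau$ and $\langle\,,\,\rangle_3$—are exactly what the paper does. The paper invokes Palu's decomposition in the stable category for the Euler-characteristic identity (the Grassmannians are the same since $\Ext^1_\ce(T,M)\cong G\Sigma M$), and its Lemma~\ref{lem1} carries out precisely the exponent matching you describe, using Proposition~\ref{prop1} together with the four-term exact sequence coming from the conflation.

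Your proposed ``more efficient'' second route for (d), however, has a gap. Part (b) only tells you that the \emph{specialization} of $X'_L X'_M - X'_E - X'_{E'}$ at $x_{r+1}=\cdots=x_n=1$ vanishes; the specialization map $\Z[x_1^{\pm1},\ldots,x_n^{\pm1}]\to\Z[x_1^{\pm1},\ldots,x_r^{\pm1}]$ is far from injective, and ``tracking projective summands'' does not recover the lost coefficient variables. The middle terms of the conflations in $\ce$ and of the corresponding triangles in $\cc$ differ by projective-injective summands, but this affects the prefactors $\prod x_i^{\langle FE,S_i\rangle_\tau}$ in exactly the coordinates $r<i\le n$ that the specialization kills. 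To make a lifting argument work you would need an a priori reason why both sides of the desired identity lie in a subring on which the specialization is injective, and establishing that would essentially require the exponent computation of Lemma~\ref{lem1} anyway. Stick with your first route.
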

\begin{proof}
a) is straightforward.

b)  We have
\[
 X_{\Sigma M}=\prod_{i=1}^r x_i^{-[\operatorname{coind}_{\ul{T}} (\Sigma M): T_i]}
\sum_e \chi(Gr_e(G\Sigma M)) \, \prod_{i=1}^r x_i^{\langle S_i,e\rangle_a}.
\]
Now by the definition, we have
\[
 G\Sigma M = \Hom_{\cc}(T,\Sigma M) = \Ext_{\ce}(T,M).
\]
Therefore, we only need to show that the exponents of $x_i$, $1\leq i\leq r$, in the corresponding terms of $X_{\Sigma M}$ and $X_M'$ are equal.
There exists a triangle in $\cc$
\[
 T_M^1\to T_M^0 \to M\to \Sigma T_1
\]
with $T_M^0$ and $T_M^1$ in $\add{\ul{T}}$. We may and will assume that this triangle is minimal, {\em i.e.} does not admit a non zero direct factor of the form
\[
 T'\to T'\to 0 \to \Sigma T'.
\]
Since $\ce$ is Frobenius,  we can lift this triangle to a short exact sequence in $\ce$
\[
 0\to T_M^1\to T_M^0\oplus  P\to M\to 0,
\]
where $P$ is a projective of $\ce$. Applying the functor $F$ to this short exact sequence, we get a
projective resolution of  $FM$ as a $C$-module,
\[
 0\to FT_M^1\to F(T_M^0\oplus P) \to FM \to 0.
\]
Therefore, we have
\[
  \langle FM,S_i\rangle_{\tau} = [FT_M^0\oplus FP, S_i]-[FT_M^1, S_i] = [FT_M^0,S_i]-[FT_M^1,S_i]
 \]
for  $1\leq i\leq r$.

On the other side, we have the following minimal triangle
\[
 \Sigma M\to \Sigma^2 T_M^1\to \Sigma^2 T_M^0\to \Sigma^2 M.
\]
By the definition of the coindex, we get
\[
 - [\mbox{coind}_{\ul{T}}\ (\Sigma M):T_i] = - [T_M^1 - T_M^0 :T_i] = \langle FM,S_i\rangle_{\tau},\ \mbox{for}\ \, 1\leq i\leq r.
\]

Next we will show that $\langle S_i, e\rangle_{a}=-\langle e,
S_i\rangle_3.$ Let $N$ be a $C$-module such that $\mbox{\ul{dim}}\
N=e$. Note that $N$ and the $S_i, \ 1\leq i\leq r$, are $\ul{C}$-modules
and that all of them are finitely presented $C$-modules. Therefore, they
lie in the perfect derived category $\per (C)$. Thus, we can use the
relative 3-Calabi-Yau property of $\per (C)$ ({\em \confer
\cite{KellerReiten07}}) to deduce that $\langle S_i,
e\rangle_a=-\langle e, S_i\rangle_3$. We have
\[
\Ext_C^2(N,S_i) =\Ext_C(S_i,N)=\Ext_{\ul{C}}(S_i,N),
\]
\[
 \Ext_C^3(N,S_i) =\Hom_C(S_i,N)=\Hom_{\ul{C}}(S_i,N),
\]
for $1\leq i\leq r$.
By the definition of $\langle S_i, N\rangle_{a}$, we have
\begin{eqnarray*}
 \langle S_i, N\rangle_{a} &=&\mbox{dim}_k\Hom_{\ul{C}}(S_i,N)-\mbox{dim}_k\Ext_{\ul{C}}(S_i,N)+
 \mbox{dim}_k\Ext_{\ul{C}}(N,S_i)-\mbox{dim}_k\Hom_{\ul{C}}(N,S_i)\\
&=&\mbox{dim}_k\Hom_{C}(S_i,N)-\mbox{dim}_k\Ext_{C}(S_i,N)+\mbox{dim}_k\Ext_C(N,S_i)-\mbox{dim}_k\Hom_C(N,S_i)\\
&=&^3[N,S_i]-\,^2[N,S_i]+\,^1[N,S_i]-[N,S_i]\\
&=&-\langle N, S_i\rangle_3.
\end{eqnarray*}

c) is proved in exactly the same way as Corollary~3.7 in \cite{CalderoChapoton06}.

d)  Let
\[
0 \to L \xrightarrow{i} E \xrightarrow{p} M \to 0 \ \,\mbox{ and }\ \, 0 \to M \xrightarrow{i'} E' \xrightarrow{p'} L \to 0  \ko
\]
be the non-split conflations in $\ce$, and
\[\Sigma L\xrightarrow{G\Sigma i} \Sigma E\xrightarrow{G\Sigma p} \Sigma M \to \Sigma^2 L
\]
\[
 \Sigma M\xrightarrow{G\Sigma i'} \Sigma E'\xrightarrow{G\Sigma p'} \Sigma L \to \Sigma^2 N
\]
 the associated triangles in $\cc$. For any classes $e$, $f$, $g$ in the Grothendieck group
 $K_0(\mod \ul{C})$, let $X_{e,f}$ be the variety whose points are the $\ul{C}$-submodules
 $E\subset G\Sigma E$ such that the dimension vector of $(G\Sigma i)^{-1}E$
 equals $e$ and the dimension vector of $(G\Sigma p)E$ equals $f$. Similarly,
 let $Y_{f,e}$ be the variety whose points are the $\ul{C}$-submodules
 $E\subset G\Sigma E'$ such that the dimension vector of $(G\Sigma i')^{-1}E$
 equals $f$ and the dimension vector of $(G\Sigma p')E$ equals $e$. Put
\[
 X_{e,f}^g=X_{e,f}\cap Gr_g(G\Sigma E),
\]
\[
 Y_{f,e}^g=Y_{f,e}\cap Gr_g(G\Sigma E').
\]
Since $\cc$ is a $2$-CY triangulated category, by section 5.1 of
\cite{Palu08} we also have
\[
 \chi (Gr_e(G\Sigma L)\times Gr_f(G\Sigma M))=\sum_g\chi(X_{e,f}^g)+\chi(Y_{f,e}^g).
\]
Therefore, part d) is a consequence of the following lemma.
\begin{lemma}
\label{lem1}
If $X_{e,f}^g\neq \emptyset$, then we have the following equality
\[
 -\langle g, S_i\rangle_3 +\langle FE, S_i\rangle_{\tau}  = -\langle e+f , S_i\rangle_3 +\langle FL, S_i\rangle_{\tau} +\langle FM, S_i\rangle_{\tau}, 1\leq i\leq n.
\]

\end{lemma}
{\it Proof.} We have the following commutative diagram as in section
4 of \cite{Palu08}
\[
\xymatrix{(G\Sigma i)^{-1}E \ar[r]^-{\alpha} \ar[d]_i & E \ar[r]^-{\beta} \ar[d]_j & (G\Sigma p)E \ar[r] \ar[d]_k & 0\\
G\Sigma L \ar[r]^-{G\Sigma i} & G\Sigma E \ar[r]^-{G\Sigma p}  &
G\Sigma M \ar[r] & G\Sigma^2 L}
\]
where $i$, $j$, $k$ are monomorphisms, $\beta$ is an epimorphism and $[E]=g$,
$[G\Sigma i)^{-1}E]=e$, $[G\Sigma p)E]=f$ in $K_0(\mod C)$.
One can easily show that $\ker G\Sigma i = \ker \alpha$.  We have an exact sequence
\[
 0\to \ker \alpha \to (G\Sigma i)^{-1}E \to E \to (G\Sigma p)E\to 0.
\]
 If we apply $F=\Hom_{\ce}(T,?)$ to the short exact sequence
\[
 0\to L\to E\to M\to 0,
\]
we get the long exact sequences of $C$-modules
\[
 0\to FL\to FE\to FM\to G\Sigma L\xrightarrow{G\Sigma i}\ G \Sigma E\to \ldots,
\]
and
\[
 0\to FL\xrightarrow{Fi} FE\xrightarrow{Fp} FM\to \ker \alpha\to 0.
\]
Since $\ker \alpha$, $(G\Sigma i)^{-1}E$, $E$, $(G\Sigma p)E$ are $\ul{C}$-modules,
and the projective dimensions of $FL$, $FE$, $FM$ are not greater than 1, we can
use the method of Proposition ~\ref{prop1} to construct the projective resolutions
and compute the  truncated Euler forms.  We get that
\[
 \langle e,S_i\rangle_3 +\langle f, S_i\rangle_3 = \langle g,S_i\rangle_3 +\langle \ker \alpha, S_i\rangle_3,
\]
and
\[
 \langle FL,S_i\rangle_3+\langle FM,S_i\rangle_3=\langle FE,S_i\rangle_3+\langle \ker \alpha,S_i\rangle_3.
\]
Note that $\langle FL,S_i\rangle_3=\langle FL,S_i\rangle_{\tau}$, $\langle FM,S_i\rangle_3=\langle FM,S_i\rangle_{\tau}$ and $\langle FE,S_i\rangle_3=\langle FE,S_i\rangle_{\tau}$, which implies
\[
 \langle FL,S_i\rangle_{\tau}+\langle FM,S_i\rangle_{\tau}-
 \langle e+f,S_i\rangle_3=\langle FE,S_i\rangle_{\tau}-\langle g,S_i\rangle_3.
\]
\end{proof}

\begin{remark} \label{remark:gldim3}
If $C$ has finite global dimension, the Grothendieck group $K_0(\mod C)$ has
the Euler form $\langle \ , \ \rangle$. We can then define a Laurent polynomial
$X_M^f$ as follows
\[
 X_M^f=\prod_{i=1}^n x_i^{\langle FM, S_i\rangle}
\sum_e \chi(Gr_e(\Ext^1_\ce(T,M))) \, \prod_{i=1}^n x_i^{\langle S_i,e\rangle}.
\]
One can show that in this case $X_M'=X_M^f$.
In fact, if $\mbox{gldim} C <\infty$, then the perfect derived category $\per (C)$
equals $D^b(\mod C)$, and $S_i$ belongs to $\per (C)$ for all $i$.
Thus, we have
\[
\langle S_i,e\rangle =\sum_{i=0}^3(-1)^i \mbox{dim}\Ext^i_C(S_i,e)=-\langle e,S_i\rangle_3
\]
and $\langle FM, S_i\rangle_{\tau}=\langle FM, S_i\rangle$. The assumption that $C$ is of finite
global dimension holds for the examples constructed in \cite{BuanIyamaReitenScott07} by Proposition~I.2.5~b)
of [loc. cit.] and for the examples constructed in \cite{GeissLeclercSchroeer06} by
Proposition~11.5 of [loc.cit.].
\end{remark}

\section{Index and $\mathbf{g}$-vector}
\label{IG}

\subsection{Index} \label{ss:Index}
As in section~\ref{CC2FC}, we let $k$ be an algebraically closed
field and $\ce$ a $k$-linear Frobenius category with split
idempotents. We assume that $\ce$ is $\Hom$-finite and that the
stable category $\cc=\ul{\ce}$ is $2$-Calabi-Yau (\confer
section~\ref{ss:CalabiYauCategories}). Moreover, we assume that
$\ce$ admits a cluster-tilting object $T$ and we write
$C=\End_\ce(T)$ and $\ul{C}=\End_\cc(T)$.

Let $\cd(\Mod C)$ be the derived category of $C$-modules, $\cd^-(\mod C)$
the  right bounded derived category of $\mod C$, $\ch^-(\cp)$ the right
bounded homotopy category of finitely generated projective $C$-modules.
It is well known that there is an equivalence
\[
 \ch^-(\cp)\stackrel{\sim}{\longrightarrow}\cd^-(\mod C).
\]

\begin{proposition}\label{prop2}
 For an arbitrary $\ul{C}$-module $Z$ which is also a finitely presented $C$-module  we have a canonical isomorphism
\[
 D \Hom_{\cd^-(\mod C)}(Z,?) \stackrel{\sim}{\longrightarrow} \Hom_{\cd^-(\mod C)}(?,Z[3]).
\]
\end{proposition}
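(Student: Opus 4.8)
The plan is to reduce the asserted duality to the single statement that the Nakayama functor shifts $Z$ by $3$, and then to extract that statement from the $2$-Calabi-Yau property of $\cc=\ul{\ce}$ together with the Frobenius structure of $\ce$.

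First I would record that $Z$ lies in $\per C$. Since $Z$ is a $\ul{C}$-module, it has projective dimension at most $3$ over $C$ by \cite{KellerReiten07}, so it admits a resolution $0\to P^3\to P^2\to P^1\to P^0\to Z\to 0$ by finitely generated projective $C$-modules and is in particular perfect. Both sides of the claimed isomorphism are cohomological functors of the variable $?$, and for $X\in\cd^-(\mod C)$ the space $\Hom_{\cd^-(\mod C)}(Z,X)$ is finite-dimensional, because $Z$ is a bounded complex of finitely generated projectives and only finitely many components of $X$ contribute; hence the $k$-dual $D$ is meaningful.

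Next I would invoke relative Serre duality for the perfect object $Z$: there is an isomorphism
\[
 D\Hom_{\cd^-(\mod C)}(Z,X)\iso \Hom_{\cd^-(\mod C)}(X,\nu Z),\qquad \nu Z = Z\lten_C DC,
\]
natural in $X\in\cd^-(\mod C)$, which is the standard duality attached to a perfect complex and the Nakayama bimodule $DC$. With this in hand the Proposition becomes equivalent to producing a canonical isomorphism $\nu Z\iso Z[3]$ in $\cd^b(\mod C)$. As $Z$ is perfect one has $\nu Z\cong D\RHom_C(Z,C)$, so this is in turn equivalent to the vanishing $\Ext^i_C(Z,C)=0$ for $i\neq 3$ together with a natural identification $\Ext^3_C(Z,C)\cong DZ$.

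The heart of the argument, and the place where the hypotheses are genuinely used, is this computation of $\RHom_C(Z,C)$; it is the relative $3$-Calabi-Yau property already exploited in the proof of Theorem~\ref{thm1}. I would establish it by transporting $\Ext_C(Z,C)$ into the $2$-Calabi-Yau category $\cc$ through $G=\Hom_\cc(T,?)$: writing $Z=GX_0$ for a suitable $X_0$ (using that $G$ induces an equivalence onto $\mod\ul{C}$, \cite{KellerReiten07}), the Serre duality $D\Hom_\cc(X_0,Y)\iso\Hom_\cc(Y,\Sigma^2 X_0)$ of the $2$-Calabi-Yau category accounts for two of the three homological degrees, while the Frobenius structure of $\ce$ (equivalently, the Gorenstein property of $C$) supplies the extra degree that upgrades the shift from $\Sigma^2$ to $[3]$. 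Applying $\nu$ to the length-$3$ projective resolution then yields precisely an injective coresolution $0\to Z\to \nu P^3\to\nu P^2\to\nu P^1\to\nu P^0\to 0$, the concrete form of $\nu Z\iso Z[3]$. Finally I would verify naturality and, as a sanity check, evaluate on the simples $Z=S_i$, recovering the identities $\Ext^2_C(N,S_i)\cong D\Ext_{\ul{C}}(S_i,N)$ and $\Ext^3_C(N,S_i)\cong D\Hom_{\ul{C}}(S_i,N)$ underlying Theorem~\ref{thm1}. The main obstacle is exactly this translation: carefully matching the relative Serre functor of $\per C$ with $\Sigma^2$ on $\cc$ and pinning down the single extra degree contributed by the projective-injective objects of $\ce$.
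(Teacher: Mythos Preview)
Your approach is correct but follows a genuinely different route from the paper's. The paper does not use the Nakayama functor at all; it takes the relative $3$-Calabi-Yau duality on $\per C$ from \cite{KellerReiten07} as a black box and extends it from $\per C$ to $\cd^-(\mod C)$ by a compactness/homotopy-colimit argument: a general $X\in\cd^-(\mod C)$ is written as $\mbox{hocolim}\,X_i$ for the directed system of brutal truncations $X_i\in\per C$ of a projective resolution $P_X$, the compactness of $Z\in\per C$ lets one pull the colimit through $\Hom(Z,-)$, the $3$-CY duality on $\per C$ is applied termwise, and finally one checks that the resulting system $\Hom(X_i,Z[3])$ is eventually stationary so that $D$ commutes with the (co)limit. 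Your route short-circuits this extension step entirely: you observe that for perfect $Z$ the Nakayama duality $D\Hom(Z,X)\cong\Hom(X,\nu Z)$ already holds for arbitrary $X\in\cd^-(\mod C)$ (since $\nu Z$ is a bounded complex of injectives), so the whole proposition collapses to the single isomorphism $\nu Z\cong Z[3]$. This is more conceptual and avoids the colimit machinery; the paper's argument, in exchange, stays purely at the level of Hom-space manipulations and invokes \cite{KellerReiten07} only as a statement. One remark: your step~5 tries to re-derive $\nu Z\cong Z[3]$ from the $2$-CY property of $\cc$ and the Frobenius structure of $\ce$, but this is precisely the content of the relevant part of \cite{KellerReiten07}, and the sketch you give is too impressionistic to stand alone; the cleanest way to finish your argument is simply to combine the $3$-CY duality on $\per C$ (cited, as the paper does) with the Nakayama duality on $\per C$ and conclude $\nu Z\cong Z[3]$ by Yoneda.
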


\begin{proof} For arbitrary $X\in \cd^-(\mod C)$, by the equivalence,
we have a $P_X\in \ch^-(\cp)$  such that $X\cong P_X$ in $\cd^-(\mod C)$.
Assume that $P_X$ has the following form
\[
 \ldots \to P_m\to P_{m+1}\to \ldots \to P_{n-1}\to P_n\to 0\to 0\ldots.
\]
Put
\begin{eqnarray*}
X_0 &=&   \ldots \to 0 \to 0\to P_n\to 0\ldots,\\
X_i &=&\ldots \to 0\to P_{n-i} \to \ldots \to P_n \to 0 \ldots, \
\mbox{for}\  i>0.
\end{eqnarray*}
Clearly, the complex $P_X$ is the direct limit of the complexes
$X_i$. We write $\mbox{hocolim}$ for the total left derived functor
of the functor of taking the direct limit. Since taking direct
limits over filtered systems is an exact functor, the functor
$\mbox{hocolim}$ is simply induced by the direct limit functor.
Thus, we have $P_X \cong \mbox{hocolim} \ X_i$ in $\cd(\Mod C)$.
Note that by Proposition 4 of \cite{KellerReiten07}, $Z$ belongs to
$\per C$, \ie  $Z$ is compact in $\cd(\Mod C)$. So we have
\begin{eqnarray*}
\Hom_{\cd(\Mod C)}(Z,X) &\cong& \Hom_{\cd(\Mod C)}(Z, P_X)\\
&\cong& \Hom_{\cd(\Mod C)}(Z, \mbox{hocolim} X_i)\\
 &\cong& \mbox{colim} \Hom_{\cd(\Mod C)}(Z,X_i).
\end{eqnarray*}

By the definition of $X_i$, we know that $X_i \in \per C$.
Since $\per C$ is a full subcategory of $\cd(\Mod C)$,
by the relative 3-Calabi-Yau property of $\per C$, we have the following
\[
 \mbox{colim} \Hom_{\cd(\Mod C)}(Z,X_i) \cong \mbox{colim} D\Hom_{\cd(\Mod C)}(X_i, Z[3]).
\]
It is easy to see that this colimit is a stationary system,
{\em i.e.} $\exists\  N$ such that for  $i>N$, we have
\[
 D\Hom_{\cd(\Mod C)}(X_i, Z[3])\cong D\Hom_{\cd(\Mod C)}(X_{i+1}, Z[3]).
\]
Thus, we have
\begin{eqnarray*}
\mbox{colim} D\Hom_{\cd(\Mod C)}(X_i, Z[3])&\cong & D \ \mbox{lim} \Hom_{\cd(\Mod C)}(X_i, Z[3])\\
&\cong & D\Hom_{\cd(\Mod C)}(\mbox{hocolim} X_i, Z[3])\\
&\cong & D\Hom_{\cd(\Mod C)}(P_X, Z[3]).
\end{eqnarray*}
Note that since $\cd^-(\mod C)$ is a full subcategory of $\cd(\Mod C)$, we get the isomorphism
\[
 D \Hom_{\cd^-(\mod C)}(Z,X) \stackrel{\sim}{\longrightarrow} \Hom_{\cd^-(\mod C)}(X,Z[3]).
\]
\end{proof}

For each $X\in \ce$, there is a unique  minimal conflation (up to isomorphism)
\[
 0\to T_X^1\to T_X^0\to X\to 0
\]
with $T_X^0,T_X^1\in \add T$. As in \cite{Palu08}, put
\[
 \mbox{ind}_T(X)=[T_X^0]-[T_X^1] \ \ \mbox{in} \ K_0(\add T).
\]
By the proof of Theorem~\ref{thm1}, we have
\[
 \mbox{ind}_T(X) = \sum_{i=1}^n \langle FX, S_i\rangle_{\tau} [T_i].
\]

The following result is easily deduced from Theorem~2.3 of \cite{DehyKeller08}.
\begin{lemma}
\label{lem2}
  If $X$ is a rigid object of $\ce$, then $X$ is determined up to isomorphism by
  $\mbox{ind}_T(X)$,  i.e. if \ $Y$ is rigid and $\mbox{ind}_T(X) = \mbox{ind}_T(Y)$,
  then $X$ is isomorphic to $Y$.
\end{lemma}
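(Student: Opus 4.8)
The plan is to reduce the statement to the analogous result in the triangulated category $\cc=\ul{\ce}$, namely Theorem~2.3 of \cite{DehyKeller08}, which asserts that a rigid object of a $\Hom$-finite $2$-Calabi-Yau triangulated category with a cluster-tilting object is determined up to isomorphism by its index. Recall that $\ul{T}=\bigoplus_{i=1}^r T_i$ is a cluster-tilting object of $\cc$ and that, since $\Ext^1_\ce(X,X)\cong\Ext^1_\cc(X,X)$, any $X$ that is rigid in $\ce$ is rigid in $\cc$. The one point needing care is that an isomorphism in $\cc$ is weaker than an isomorphism in $\ce$: passing to the stable category only remembers the non-projective part of an object. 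Thus the argument must recover the projective-injective summands separately, using the extra information that $\ind_T(X)\in K_0(\add T)$ carries over its image in $\cc$.

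First I would compare the two indices. The minimal conflation $0\to T_X^1\to T_X^0\to X\to 0$ defining $\ind_T(X)$ induces, via Happel's construction, a triangle $T_X^1\to T_X^0\to X\to\Sigma T_X^1$ in $\cc$ with terms in $\add\ul{T}$ (the projective summands $T_{r+1},\ldots,T_n$ becoming zero). Since the index in $\cc$ does not depend on the chosen triangle, this triangle computes $\ind_{\ul{T}}(X)$, and applying $K_0$ shows that the projection $\pi\colon K_0(\add T)=\Z^n\to K_0(\add\ul{T})=\Z^r$ that kills the projective coordinates satisfies $\pi(\ind_T(X))=\ind_{\ul{T}}(X)$. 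Hence if $\ind_T(X)=\ind_T(Y)$, then $\ind_{\ul{T}}(X)=\ind_{\ul{T}}(Y)$, and Theorem~2.3 of \cite{DehyKeller08} gives an isomorphism $X\cong Y$ in $\cc$. Writing $X=X'\oplus P_X$ and $Y=Y'\oplus P_Y$ with $X',Y'$ without projective-injective summands and $P_X,P_Y$ projective-injective, this means precisely that $X'\cong Y'$ in $\ce$ (objects without projective summands that are isomorphic in the stable category are isomorphic in $\ce$, as $\ce$ is Krull-Schmidt).

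It remains to identify the projective parts. Here I would use that $\ind_T$ is additive on direct sums (the minimal conflation of a direct sum being the sum of the minimal conflations) and that $\ind_T(P)=[P]$ for every projective-injective $P$, since $0\to 0\to P\to P\to 0$ is its minimal conflation. Thus $\ind_T(X)=\ind_T(X')+[P_X]$ and likewise for $Y$. Because $X'\cong Y'$ yields $\ind_T(X')=\ind_T(Y')$, the hypothesis $\ind_T(X)=\ind_T(Y)$ forces $[P_X]=[P_Y]$ in the free abelian group $K_0(\add T)=\Z^n$; as $P_X$ and $P_Y$ are direct sums of the $T_i$ with $r<i\leq n$, their multiplicities coincide and $P_X\cong P_Y$. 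Together with $X'\cong Y'$ this gives $X\cong Y$ in $\ce$.

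The main obstacle is the comparison of indices in the first step: one must verify that the triangle in $\cc$ induced by the defining conflation in $\ce$ genuinely computes $\ind_{\ul{T}}$, i.e. that passing to the stable category deletes exactly the projective coordinates while leaving the non-projective ones unchanged, so that $\pi\circ\ind_T=\ind_{\ul{T}}$. Once this compatibility is established, the remainder is bookkeeping with the splitting into projective and non-projective parts, and the rigidity of $X$ and $Y$ enters only through the application of \cite{DehyKeller08}.
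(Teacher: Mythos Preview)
Your proof is correct and follows essentially the same strategy as the paper's: reduce to Theorem~2.3 of \cite{DehyKeller08} in the stable category $\cc=\ul{\ce}$ to pin down the non-projective part, then use the additivity of $\ind_T$ together with $\ind_T(P)=[P]$ for projective-injectives to recover the projective part from the hypothesis $\ind_T(X)=\ind_T(Y)$. The only cosmetic difference is that the paper phrases the stable isomorphism as $X\oplus P_X\cong Y\oplus P_Y$ in $\ce$ and then cancels after showing $P_X\cong P_Y$, whereas you split off the projective summands up front; both arguments are the same in substance.
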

\begin{proof}
 Since $\mbox{ind}_T(X)=\mbox{ind}_T(Y)$,  we have $\mbox{ind}_{\ul{T}}(X)=\mbox{ind}_{\ul{T}}(Y)$ in the stable
 category $\ul{\ce}$. By Theorem~2.3 of \cite{DehyKeller08}, we have $X\cong Y$ in $\ul{\ce}$.
 Thus, there are $\ce$-projectives $P_X$ and $P_Y$ such that $X\oplus P_X\cong Y\oplus P_Y$ in $\ce$.
 Consider the minimal right $T$-approximation of $X\oplus P_X$
\[
 0\to T^1\to T^0\to X\oplus P_X\to 0,
\]
we have $\mbox{ind}_T(X\oplus P_X) =\mbox{ind}_T(Y\oplus P_Y)=[T^0]-[T^1]$. Note that
\[
 \mbox{ind}_T(X)=\mbox{ind}_T(X\oplus P_X)-[P_X]=\mbox{ind}_T(Y\oplus P_Y)-[P_Y]=\mbox{ind}_T(Y),
\]
which implies $[P_X]=[P_Y]$ in $K_0(\add T)$. Thus, we have $P_X\cong P_Y$ and $X\cong Y$ in $\ce$.
\end{proof}

\subsection{$\mathbf{g}$-vector} \label{ss:gvector}
Let us recall the definition of $\mathbf{g}$-vectors from section~7
of \cite{FominZelevinsky07}. Let $1< r\leq n$ be integers. Let
$\tilde{B}=(\tilde{b}_{ij})$ be an $n\times r$ matrix with integer
entries, whose principal part $B$ (\ie the submatrix formed by the
first $r$ rows) is antisymmetric. Let $\ca(\tilde{B})$ be the
cluster algebra with coefficients associated with $\tilde{B}$, \confer
the end of section~\ref{ss:ClusterAlgebras}. Let $z$ be an element
of $\ca(\tilde{B})$. Suppose that we can write $z$ as
\[
 z = R(\hat{y}_1,\ldots, \hat{y}_r)\prod_{i=1}^nx_i^{g_i},
 \mbox{where} \   \hat{y}_j=\prod_{i=1}^nx_i^{\tilde{b}_{ij}},
\]
where  $R(\hat{y}_1,\ldots, \hat{y}_r)$ is a primitive rational
polynomial. If $\mbox{rank}\, \tilde{B}=r$, then the
$\mathbf{g}$-vector of $z$ is defined by
\[
 g(z)=(g_1,\ldots, g_r).
\]
Note that $\mbox{rank}\,\tilde{B}=r$  implies that the $\mathbf{g}$-vector is well-defined.

As in the previous section, we let $k$ be an algebraically closed
field and $\ce$ a $k$-linear Frobenius category with split
idempotents. We assume that $\ce$ is $\Hom$-finite and that the
stable category $\cc=\ul{\ce}$ is $2$-Calabi-Yau (\confer
section~\ref{ss:CalabiYauCategories}). Moreover, we assume that
$\ce$ admits a cluster-tilting object $T$ and we write
$C=\End_\ce(T)$ and $\ul{C}=\End_\cc(T)$. Let $T_1$, $T_2$,
$\ldots$, $T_n$ be the pairwise non isomorphic indecomposable direct
summands of $T$ numbered in such a way that $T_i$ is projective iff
$r< i\leq n$. We define $B(T)=(b_{ij})_{n\times n}$ to be the
antisymmetric matrix associated with the  quiver of the endomorphism
algebra of $T$. Let $B(T)^0$ be the submatrix formed by the first
$r$ columns of $B(T)$. We suppose that we have $\rank\, B(T)^0=r$.
In analogy with the definition of $\mathbf{g}$-vectors in a cluster
algebra, for $M\in \ce$, if we can write $X_M'$ as
\[
 X_M'=R(\hat{y}_1,\ldots, \hat{y}_r)\prod_{i=1}^nx_i^{g_i},
 \mbox{where} \   \hat{y}_j=\prod_{i=1}^nx_i^{b_{ij}},
\]
where $R(\hat{y}_1,\ldots, \hat{y}_r)$ is a primitive rational
polynomial,  then we define the $\mathbf{g}$-vector $g_T(X_M')$ of
$M$ with respect to $T$ to be
\[
 g_T(X_M')=(g_1,\ldots, g_r).
\]
As in the cluster algebra case, this is well-defined since $\rank\, B(T)^0=r$.

\begin{proposition}\label{prop3}
Assume that $\rank\,  B(T)^0 = r$.
For arbitrary $M\in \ce$, the $\mathbf{g}$-vector $g_T(X_M')$ is well-defined and its $i$-th coordinate  is given by
\[
 g_T(X_M')(i)=[\mbox{ind}_T(M):T_i],\  1\leq i\leq r.
\]
\end{proposition}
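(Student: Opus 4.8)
The plan is to read the $\mathbf g$-vector directly off the defining formula for $X'_M$, by writing $X'_M$ as a Laurent monomial in the $x_i$ carrying the index, times a primitive polynomial in the variables $\hat{y}_j$. Recall from the proof of Theorem~\ref{thm1} that $\ind_T(M) = \sum_{i=1}^n \langle FM, S_i\rangle_\tau [T_i]$, so that the prefactor $\prod_{i=1}^n x_i^{\langle FM,S_i\rangle_\tau}$ appearing in $X'_M$ is exactly $\prod_{i=1}^n x_i^{[\ind_T(M):T_i]}$. It therefore suffices to prove that the remaining factor $\sum_e \chi(Gr_e(\Ext^1_\ce(T,M)))\prod_{i=1}^n x_i^{-\langle e,S_i\rangle_3}$ is a primitive polynomial in $\hat{y}_1,\ldots,\hat{y}_r$; reading off the first $r$ exponents of the monomial part then gives the asserted formula.

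The key step is the monomial identity
\[
\prod_{i=1}^n x_i^{-\langle e,S_i\rangle_3} = \prod_{j=1}^r \hat{y}_j^{\,e_j},
\]
valid for each class $e$ occurring in the sum. Such an $e$ is the dimension vector of a submodule of the $\ul C$-module $\Ext^1_\ce(T,M)$, so it is supported on $S_1,\ldots,S_r$ with non-negative coordinates $e_j=[e:S_j]$. Since these $S_j$ are $\ul C$-modules, $\langle e,S_i\rangle_3$ is a well-defined linear function of $e$ by Proposition~\ref{prop1}, and the identity reduces to proving $\langle S_j,S_i\rangle_3 = -b_{ij}$ for $1\le j\le r$ and all $1\le i\le n$. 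I would apply the relative $3$-Calabi-Yau duality of Proposition~\ref{prop2} with $Z=S_j$ to obtain $\dim\Ext^l_C(S_j,S_i)=\dim\Ext^{3-l}_C(S_i,S_j)$; substituting this into $\langle S_j,S_i\rangle_3=\sum_{l=0}^3(-1)^l\,{}^l[S_j,S_i]$ makes the $\Hom$ and $\Ext^3$ terms cancel, leaving $\langle S_j,S_i\rangle_3=\dim\Ext^1_C(S_i,S_j)-\dim\Ext^1_C(S_j,S_i)$. Finally $\dim\Ext^1_C(S_j,S_i)=\dim\mathsf{irr}(T_i,T_j)$ identifies this difference with $-b_{ij}$, and combined with $\hat{y}_j=\prod_i x_i^{b_{ij}}$ this yields the displayed identity.

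Granting the identity, $X'_M=\prod_{i=1}^n x_i^{[\ind_T(M):T_i]}\cdot F_M$ with $F_M=\sum_e\chi(Gr_e(\Ext^1_\ce(T,M)))\prod_{j=1}^r\hat{y}_j^{\,e_j}$ a genuine polynomial in $\hat{y}_1,\ldots,\hat{y}_r$ with non-negative exponents; its $e=0$ term is $\chi(Gr_0)=1$, so $F_M$ has constant term $1$ and is primitive. The hypothesis $\rank B(T)^0=r$ makes $e\mapsto B(T)^0 e$ injective, so the monomials $\prod_j\hat{y}_j^{\,e_j}$ are pairwise distinct and the factorization of $X'_M$ into a primitive polynomial in the $\hat{y}_j$ times a Laurent monomial in the $x_i$ is unique, just as in the cluster-algebra argument of \cite{FominZelevinsky07}; this simultaneously gives well-definedness and the formula $g_T(X'_M)(i)=[\ind_T(M):T_i]$ for $1\le i\le r$. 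I expect the main obstacle to be the sign-correct identity $\langle S_j,S_i\rangle_3=-b_{ij}$: one must verify it for the frozen indices $r<i\le n$ as well (which is why it matters that Proposition~\ref{prop2} constrains only the first argument), and one must track the orientation convention carefully so that $F_M$ emerges as a polynomial in the $\hat{y}_j$ rather than in their inverses.
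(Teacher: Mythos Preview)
Your proposal is correct and follows essentially the same route as the paper: both arguments rewrite $X'_M$ as $\prod_i x_i^{\langle FM,S_i\rangle_\tau}$ times a sum over $e$, use the relative $3$-Calabi-Yau duality (Proposition~\ref{prop2}) together with the vanishing of $\Hom$ between non-isomorphic simples to show $\langle S_j,S_i\rangle_3=-b_{ij}$ for $1\le j\le r$, rewrite the sum as a polynomial in the $\hat y_j$ with constant term $1$, and then read off the $\mathbf g$-vector from the prefactor using the identification $\langle FM,S_i\rangle_\tau=[\ind_T(M):T_i]$. The only cosmetic difference is that the paper displays the computation as $\langle S_i,S_j\rangle_3=b_{ij}$ and then swaps the arguments, whereas you compute $\langle S_j,S_i\rangle_3=-b_{ij}$ directly, which is the form actually used; the content is identical.
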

\begin{proof}    By the relative 3-Calabi-Yau property of $\cd^-(\mod C)$, for $1\leq i\leq n$, $1\leq j\leq r$,  we have
\begin{eqnarray*}
 \langle S_i, S_j\rangle_3 &=&[S_i, S_j]-\,^1\,\![S_i,S_j]+\,^2\,\![S_i,S_j]-\,^3\,\![S_i,S_j]\\
&=&[S_i, S_j]-\,^1\,\![S_i,S_j]+\,^1\,\![S_j,S_i]-\,[S_j,S_i]\\
&=&^1\,\![S_j,S_i]-\,^1\,\![S_i,S_j]\\
&=&b_{ij},
\end{eqnarray*}
where the last equality follows from the definition of $B(T)$.
 Recall the definition of $X_M'$
 \[
 X_M'=\prod_{i=1}^n x_i^{\langle FM, S_i\rangle_{\tau}}
\sum_e \chi(Gr_e(\Ext^1_\ce(T,M))) \ \prod_{i=1}^n x_i^{- \langle e, S_i\rangle_3}.
\]
Let $e$ be the dimension vector of a $C$-submodule of
$\Ext_{\ce}^1(T,M)$ and $e_j$ its $j$-th coordinate in the basis of
the $S_i$, $1\leq i\leq n$. Then we have
\[
 - \langle e, S_i\rangle_3 = - \sum_{j=1}^r e_j\langle S_j,S_i\rangle_3 =  \sum_{j=1}^rb_{ij}e_j.
\]
Therefore, we get
\[
\prod_{i=1}^n x_i^{- \langle e,S_i\rangle_3}=\prod_{i=1}^nx_i^{\sum_{j=1}^r b_{ij}e_j}=\prod_{j=1}^r\hat{y}_j\,^{e_j}.
\]
Thus, we can write
\[
 X_M'=\prod_{i=1}^n x_i^{\langle FM, S_i\rangle_{\tau}}
(\sum_e \chi(Gr_e(\Ext^1_\ce(T,M))) \  \prod_{j=1}^r \hat{y}_j\,^{e_j}).
\]
The polynomial
\[
 R(\hat{y}_1, \ldots, \hat{y}_r) = \sum_e \chi(Gr_e(\Ext_{\ce}^1(T,M))) \prod_{j=1}^r \hat{y}_j\,^{e_j}
\]
is primitive since it has constant term $1$. Thus, by definition we have  $g_T(X_M')(i)=\langle FM, S_i\rangle_{\tau} =  [\mbox{ind}_T(M): T_i]$.
\end{proof}

\begin{corollary}\label{cor1} As above, let $\ce$ be a $\Hom$-finite
$k$-linear Frobenius category such that its stable category
$\cc=\ul{\ce}$ is $2$-Calabi-Yau and assume that
\begin{itemize}
\item $\ce$ admits a cluster-tilting object $T$ with indecomposable direct summands
$T_1$, \ldots, $T_n$ numbered in such a way that $T_i$ is projective
iff $r<i\leq n$, where $1<r\leq n$ is an integer;
\item the first $r$ columns of the antisymmetric matrix $B(T)$ associated with
the quiver of the algebra $C=\End_\ce(T)$ are linearly independent.
\end{itemize}
Then the following hold.
\begin{itemize}
\item[a)] The map $M \mapsto X'_M$ induces an injection from the set
of isomorphism classes of non projective rigid indecomposables of
$\ce$ into the set $\Q(x_1, \ldots, x_n)$.
\item[b)] Let $I$ be a
finite set and $T^i$, $i\in I$, cluster tilting objects of $\ce$.
Suppose that for each $i\in I$, we are given an object $M_i$ which
belongs to $\add T^i$ and does not have non zero projective direct
factors. If the $M_i$ are pairwise non isomorphic, then the
$X_{M_i}'$ are linearly independent.
\end{itemize}
\end{corollary}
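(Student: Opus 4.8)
The plan is to read the index off $X'_M$ and then feed it into the rigidity results of Section~\ref{ss:Index}. Recall from the proof of Proposition~\ref{prop3} that, under the hypothesis $\rank B(T)^0=r$, one has for every $M\in\ce$ the decomposition
\[
X'_M=\Big(\prod_{i=1}^{n}x_i^{[\ind_T(M):T_i]}\Big)\,R_M(\hat{y}_1,\ldots,\hat{y}_r),\qquad
R_M=\sum_{e}\chi\big(Gr_e(\Ext^1_\ce(T,M))\big)\prod_{j=1}^{r}\hat{y}_j^{\,e_j},
\]
where $R_M$ is a genuine polynomial in the $\hat{y}_j$ (the $e$ are effective) with constant term $1$. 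Since $\rank B(T)^0=r$, the columns of $B(T)^0$ are linearly independent, so the monomials $\hat{y}_j=\prod_i x_i^{b_{ij}}$ are multiplicatively independent; hence this decomposition with $R_M$ primitive is unique, and $X'_M$ recovers the whole class $\ind_T(M)\in K_0(\add T)$, not merely its first $r$ coordinates.

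Part~a) is then immediate: if $M$ and $N$ are non projective rigid indecomposables with $X'_M=X'_N$, the uniqueness just discussed gives $\ind_T(M)=\ind_T(N)$, and Lemma~\ref{lem2} yields $M\cong N$, so $M\mapsto X'_M$ is injective on isomorphism classes.

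For part~b), first observe that each $M_i$ is rigid, being a direct summand of the rigid object $T^i$, so Lemma~\ref{lem2} makes the classes $\ind_T(M_i)\in K_0(\add T)\cong\Z^n$ pairwise distinct. I would then run a grading-plus-leading-term argument. Grading $x$-monomials by their residue in $\Z^n/B(T)^0\Z^r$, every monomial of $X'_{M_i}$ has exponent $\ind_T(M_i)+B(T)^0 e$ with $e\in\Z_{\geq0}^{\,r}$, so $X'_{M_i}$ is homogeneous of degree the residue of $\ind_T(M_i)$; a relation $\sum_i\lambda_iX'_{M_i}=0$ therefore splits into homogeneous pieces, and it suffices to treat indices sharing one residue. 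Fixing a base point $g^{(0)}$ among them, I write $\ind_T(M_i)=g^{(0)}+B(T)^0 c_i$ with pairwise distinct $c_i\in\Z^r$ (distinct because $B(T)^0$ is injective), divide out $\prod_i x_i^{g^{(0)}_i}$, and use $x^{B(T)^0 c}=\prod_{j}\hat{y}_j^{\,c_j}$ to rewrite the relation as $\sum_i\lambda_i\,\hat{y}^{\,c_i}R_{M_i}=0$ inside the genuine Laurent polynomial ring $\Q[\hat{y}_1^{\pm},\ldots,\hat{y}_r^{\pm}]$.

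The crux, and the step I expect to be the main obstacle, is this final triangularity. Each $R_{M_i}$ is supported in the positive orthant $\Z_{\geq0}^{\,r}$ with a distinguished corner monomial $\hat{y}^{0}$ of coefficient $1$, so $\hat{y}^{\,c_i}R_{M_i}$ is supported in $c_i+\Z_{\geq0}^{\,r}$ with corner $\hat{y}^{\,c_i}$. Choosing a linear form $\ell\colon\Z^r\to\Q$ with $\ell(e_j)>0$ for all $j$ and generic enough to separate the finitely many $c_i$, the $\ell$-minimal monomial of $\hat{y}^{\,c_i}R_{M_i}$ is exactly $\hat{y}^{\,c_i}$; the summand of globally least $\ell(c_i)$ then contributes a monomial occurring in no other term, forcing the corresponding $\lambda_i$ to vanish, and induction finishes the proof. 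Both the passage to a polynomial ring in $r$ independent variables and the identification of distinct indices with distinct corners $c_i$ hinge on $\rank B(T)^0=r$, while the effectivity of the dimension vectors $e$ is precisely what confines each $R_{M_i}$ to the positive orthant and supplies the corner.
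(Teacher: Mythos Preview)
Your argument is correct, and the underlying idea---isolate the index monomial as a unique extremal term of $X'_M$, then invoke Lemma~\ref{lem2}---is exactly the one the paper uses. The difference is in execution, and the paper's is noticeably leaner.

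For part~b) you split in two stages: first grade by the coset space $\Z^n/B(T)^0\Z^r$ to reduce to indices lying in one coset, then choose a generic positive linear form $\ell$ on $\Z^r$ to pick out the corner $\hat{y}^{c_i}$. The paper collapses this into a single step: since $\rank B(T)^0=r$, the equation $(k_1,\ldots,k_n)B(T)^0=(1,\ldots,1)$ has a rational solution, and setting $\deg(x_i)=k_i$ gives $\deg(\hat{y}_j)=1$ for every $j$. Then each $X'_{M_j}$ has the index monomial $\prod_i x_i^{[\ind_T(M_j):T_i]}$ as its unique term of strictly minimal total degree (the rest of $R_{M_j}$ has strictly higher degree because the $e$'s are effective and nonzero). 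A dependence relation, restricted to minimal total degree, becomes a vanishing linear combination of distinct monomials, which is absurd. No coset splitting, no genericity argument, no induction. Your linear form $\ell$ is morally the pullback along $B(T)^0$ of such a $k$, so you are doing the same thing, just from the $\hat{y}$-side rather than the $x$-side.

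For part~a), the paper simply notes that a) follows from b). Your direct uniqueness argument is fine (and does recover the full index in $K_0(\add T)$, not just the first $r$ coordinates), but it is slightly more work than just specialising b) to two objects.
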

\begin{proof}  a) clearly follows from b). Let us prove b).
First, we will show that we can assign a degree to each $x_i$ such
that for every $1 \leq i \leq r$ the degree of $\hat{y}_i$ is 1.

Indeed, it suffices to put $\mbox{deg}(x_i)=k_i$, where the $k_i$ are rationals such that we have
\[
 (k_1, k_2, \ldots, k_n)B(T)^0=(1,1,\ldots, 1).
\]
Since $\rank\, B(T)^0 =r$, this equation  does admit a solution.
Thus, the term of strictly minimal total degree in  $X_{M_j}'$ is
\[
 \prod_{i=1}^nx_i^{[\operatorname{ind}_T(M_j):T_i]}.
\]

Suppose that the $X_{M_i}'$ are linearly dependent, {\em i.e.} there
is a non-empty subset $I'$ of $I$ and rationals $c_i$, $i\in I'$,
which are all non zero such that
\[
 \sum_{i\in I'} c_i X_{M_i}'=0.
\]

If we consider the terms of minimal total degree of the polynomial above, we find
\[
 \sum_{j\in I''} c_j \prod_{i=1}^n x_i^{[\operatorname{ind}_T(M_j):T_i]}=0
\]
for some non-empty subset $I''$ of $I$. Since the  $M_j$ are all
pairwise non isomorphic, Lemma~\ref{lem2} implies that the indices
$\mbox{ind}_T(M_j)$ are all distinct.  Thus,  the monomials
$\prod_{i=1}^n x_i^{[\operatorname{ind}_T(M_j):T_i]}$ are linearly
independent. Contradiction.
\end{proof}

\begin{remark} If the algebra $C$ has finite global dimension, then
the condition $\rank B(T)^0=r$ is superfluous. Indeed, let $A$ be
the Cartan matrix of $C$. Then $B(T)^0$ is the submatrix formed by
the first $r$ columns of the invertible matrix $A^{-t}$.
\end{remark}

\vspace{0.2cm}

Next we will investigate the relation  between the indices of an exchange pair.

Recall that $F$ is the functor $\Hom_{\ce}(T,?): \ce \to \mod C$. A
conflation of $\ce$
\[
 0\to X\to Y\to Z\to 0
\]
is $F$-{\em exact} if
\[
 0\to FX\to FY\to FZ\to 0
\]
is exact in $\mod C$. The $F$-exact sequences define a new exact
structure on the additive category $\ce$. For each $X$, we have an
$F$-exact conflation
\[
 0\to T_1\to T_0\to X\to 0.
\]
This shows that $\ce$ endowed with the $F$-exact sequences has
enough projectives and that its subcategory of projectives is $\add
T$. Moreover, if we denote by $\Ext_F^i(X,Z)$ the $i$-th extension
groups of the category $\ce$ endowed with the $F$-exact sequences,
then $\Ext_F^1(X,Z)$ is the cohomology at $\Hom_{\ce}(T_1,Z)$ of the
complex
\[
 0\to \Hom_{\ce}(X,Z)\to \Hom_{\ce}(T_0, Z)\to \Hom_{\ce}(T_1,Z)\to 0\to \ldots.
\]

\begin{lemma}\label{lem3}
 For $X, Z\in \ce$, there is a functorial isomorphism
\[
 \Ext_F^i(X,Z) \stackrel{\sim}{\longrightarrow} \Ext_C^i(FX, FZ).
\]
\end{lemma}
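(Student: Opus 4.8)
The plan is to prove Lemma~\ref{lem3} by identifying $\Ext_F^i(X,Z)$ with the cohomology of $\RHom$ computed in the derived category using the $F$-exact structure, and then transporting this computation to $\mod C$ via the functor $F=\Hom_\ce(T,?)$. The key observation is that $\ce$ equipped with the $F$-exact sequences is a new exact category whose projective objects are exactly $\add T$, as the text has already established by producing an $F$-exact conflation $0\to T_1\to T_0\to X\to 0$ for each $X$. This means that for the $F$-exact structure, $\add T$ plays the role of a projective generator, and $F$ sends $\add T$ to the projective $C$-modules $\add C$ (since $FT_i=P_i$). The strategy is therefore to show that $F$ induces an exact, fully faithful identification of the $F$-projective resolution of $X$ in $\ce$ with an honest projective resolution of $FX$ in $\mod C$.

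First I would fix an object $X$ and construct, by iterating the $F$-exact conflation, an $F$-projective resolution
\[
\cdots \to T^2 \to T^1 \to T^0 \to X \to 0
\]
with all $T^j$ in $\add T$. Applying $F$ and using that $F$ is exact on $F$-exact sequences (by definition) yields a complex of projective $C$-modules
\[
\cdots \to FT^2 \to FT^1 \to FT^0 \to FX \to 0
\]
which is exact, hence a genuine projective resolution of $FX$ in $\mod C$. Next I would compute $\Ext_F^i(X,Z)$ as the cohomology of the complex $\Hom_\ce(T^\bullet, Z)$, exactly as the text does for $i=1$, and compute $\Ext_C^i(FX,FZ)$ as the cohomology of $\Hom_C(FT^\bullet, FZ)$. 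The heart of the argument is then to show that the natural map
\[
F\colon \Hom_\ce(T^j, Z) \longrightarrow \Hom_C(FT^j, FZ)
\]
is an isomorphism for every $j$, and that these isomorphisms commute with the differentials, so that they induce the desired isomorphism on cohomology.

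The essential point, and the main obstacle, is establishing that $F$ restricts to an isomorphism $\Hom_\ce(T', Z)\iso\Hom_C(FT', FZ)$ for every $T'\in\add T$ and every $Z\in\ce$. For this I would use that $F=\Hom_\ce(T,?)$ is fully faithful on $\add T$, since $\End_\ce(T)=C$ and $F$ sends $T_i$ to the indecomposable projective $P_i=\Hom_\ce(T,T_i)$, so $\Hom_\ce(T',T'')\iso\Hom_C(FT',FT'')$; I must then upgrade this to allow an arbitrary $Z$ in the second variable. Because $T'$ is $F$-projective and $FT'$ is projective over $C$, both $\Hom$-spaces are computed by the same combinatorial data once one knows $FZ$, and the desired isomorphism follows from the projectivity of $FT'$ together with the fact that $F$ is the representable functor associated with $T$. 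Once this natural isomorphism is in hand, functoriality in both arguments is automatic from the naturality of $F$, and the commutation with the differentials of the resolution is immediate since those differentials are themselves morphisms in $\add T$. The functoriality of the resulting isomorphism $\Ext_F^i(X,Z)\iso\Ext_C^i(FX,FZ)$ then follows from the standard independence of $\Ext$ of the chosen resolution.
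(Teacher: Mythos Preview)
Your proposal is correct and is essentially the explicit, hands-on version of the paper's proof. The paper simply asserts that the derived functor $\mathbb{L}F:\cd^b(\ce)\to\cd^b(\mod C)$ (where $\ce$ carries the $F$-exact structure) is fully faithful and deduces the isomorphism immediately; what you have written is precisely the unpacking of that full faithfulness at the level of projective resolutions, using that $F$ restricts to an equivalence $\add T\iso\add C$ and that $\Hom_\ce(T',Z)\iso\Hom_C(FT',FZ)$ for $T'\in\add T$ by the Yoneda-type identification $\Hom_C(C,FZ)=FZ=\Hom_\ce(T,Z)$ extended additively. The only cosmetic remark is that you do not need to argue separately for ``upgrading to arbitrary $Z$'': the isomorphism for $T'\in\add T$ and arbitrary $Z$ is immediate from the fact that $F$ is the representable functor $\Hom_\ce(T,?)$, so $\Hom_C(FT',FZ)$ reduces to a direct summand of $(FZ)^m=\Hom_\ce(T^m,Z)$.
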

\begin{proof}
 Clearly, the derived functor
\[
 \mathbb{L} F: \cd^b(\ce)\to \cd^b(\mod C)
\]
 is fully faithful.
Thus, $\Ext_F^i(X,Z) \stackrel{\sim}{\longrightarrow} \Ext_C^i(FX,
FZ)$.
\end{proof}

Now  Proposition~15.4 of \cite{GeissLeclercSchroeer07b}  still holds
in our general setting.
\begin{proposition}\label{prop4}
Let $T$ and $R$ be cluster tilting objects of $\ce$. Let
\[
 \eta':0\to R_k\to R'\to R_k^*\to 0, \  \ \eta'':0\to R_k^*\to R''\to R_k\to 0
\]
be the two exchange sequences associated to an indecomposable direct
summand $R_k$ of $R$ which is not $\ce$-projective. Then exactly one
of $\eta'$ and $\eta''$ is  $F$-exact. Moreover, we have
\[
 \dimv  \Hom_{\ce}(T,R_k)+\dimv \Hom_{\ce}(T,R_k^*) =
 \max \{\dimv \Hom_{\ce}(T,R'),\dimv \Hom_{\ce}(T,R'')\}.
\]
\end{proposition}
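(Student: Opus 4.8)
The plan is to study how $F=\Hom_\ce(T,?)$ transforms the two exchange conflations and to convert $F$-exactness into a vanishing condition that can be analysed in the stable $2$-Calabi-Yau category $\cc=\ul\ce$. I begin with two observations about $F$. Each object $X$ sits in an $F$-exact conflation $0\to T^1\to T^0\to X\to 0$ with $T^0,T^1\in\add T$; applying $F$ shows that every $FX$, and in particular $FR_k$ and $FR_k^*$, has projective dimension at most $1$ over $C$. Consequently $\langle -,S_i\rangle_\tau$ is additive on $F$-exact short exact sequences, the relevant $\Ext^2$ terms over $C$ being zero. Applying $F$ to $\eta'$ and $\eta''$ yields long exact sequences whose only possible defect of exactness sits at the connecting homomorphisms $\delta'\colon FR_k^*\to\Ext^1_\ce(T,R_k)$ and $\delta''\colon FR_k\to\Ext^1_\ce(T,R_k^*)$. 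Thus $\eta'$ is $F$-exact iff $\delta'=0$ and $\eta''$ is $F$-exact iff $\delta''=0$, and in general
\[
\dimv FR'=\dimv FR_k+\dimv FR_k^*-\dimv\im\delta',\qquad \dimv FR''=\dimv FR_k+\dimv FR_k^*-\dimv\im\delta'',
\]
where the correction terms are nonnegative in every coordinate.

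Next I would pass to $\cc$. Writing $\epsilon'\colon R_k^*\to\Sigma R_k$ and $\epsilon''\colon R_k\to\Sigma R_k^*$ for the connecting morphisms of the triangles attached to $\eta'$ and $\eta''$, the surjection $FR_k^*\twoheadrightarrow GR_k^*=\cc(T,R_k^*)$ identifies $\im\delta'$ with the image of $\cc(T,\epsilon')\colon\cc(T,R_k^*)\to\cc(T,\Sigma R_k)=\Ext^1_\ce(T,R_k)$, because the Yoneda action of $[\eta']$ factors as $G\epsilon'$ composed with this surjection; likewise for $\delta''$ and $\epsilon''$. Hence $\eta'$ (resp. $\eta''$) is $F$-exact iff $\cc(T,\epsilon')=0$ (resp. $\cc(T,\epsilon'')=0$). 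I also record that $R'$ and $R''$ lie in $\add(R)\cap\add(R^*)$ and are therefore rigid.

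The crux is to prove that exactly one of $\cc(T,\epsilon')$ and $\cc(T,\epsilon'')$ vanishes. For the \emph{not both} direction I would argue that if $\eta'$ is $F$-exact then the additivity of $\langle -,S_i\rangle_\tau$ gives $\ind_T(R')=\ind_T(R_k)+\ind_T(R_k^*)$; were $\eta''$ $F$-exact as well we would also get $\ind_T(R'')=\ind_T(R_k)+\ind_T(R_k^*)$, hence $\ind_T(R')=\ind_T(R'')$ and, by Lemma~\ref{lem2} applied to the rigid objects $R'$ and $R''$, an isomorphism $R'\cong R''$. However, applying $\Hom_\ce(R_k,-)$ to $\eta'$ and to $\eta''$, and using that $R_k$ is rigid while $\dim\Ext^1_\ce(R_k,R_k^*)=1$ by Lemma~\ref{lemma:ExactMultiplicities}, one finds $\dim\Hom_\ce(R_k,R')=\dim\Hom_\ce(R_k,R'')+1$, so $R'\not\cong R''$; contradiction. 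For the \emph{at least one} direction I would invoke the behaviour of the index of an exchange pair under mutation established in \cite{DehyKeller08}, together with the index computations of section~\ref{ss:Index}: the correction measuring the failure of index additivity is supported on at most one of the two triangles, so $\cc(T,\epsilon')$ and $\cc(T,\epsilon'')$ cannot both be nonzero. I expect this to be the main obstacle, since it is the point at which the $2$-Calabi-Yau structure is genuinely used rather than formal homological bookkeeping.

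Finally, granting that exactly one conflation, say $\eta'$, is $F$-exact, the numerical statement is immediate: $\delta'=0$ yields $\dimv FR'=\dimv FR_k+\dimv FR_k^*$, while the formula above for $\dimv FR''$ shows $\dimv FR''\leq\dimv FR'$ in every coordinate; the coordinatewise maximum of $\dimv FR'$ and $\dimv FR''$ is therefore $\dimv FR_k+\dimv FR_k^*$, which is the asserted equality.
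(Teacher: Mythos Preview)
The paper's own proof is a one-line citation: it says the argument of Proposition~15.4 in \cite{GeissLeclercSchroeer07b} goes through verbatim once one has Lemma~\ref{lem3}, i.e.\ once $\Ext^1_F(X,Z)\cong\Ext^1_C(FX,FZ)$ is available. So there is no detailed in-paper argument to compare with, and your write-up is in fact considerably more explicit than what the authors provide.

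Your setup (identifying $F$-exactness of $\eta'$ with the vanishing of $\delta'$, and the passage to $G\epsilon'$ in the stable category), your ``not both $F$-exact'' step via index additivity and Lemma~\ref{lem2}, and the deduction of the $\dimv$-formula at the end are all correct.

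The genuine gap is the ``at least one $F$-exact'' direction. You yourself flag it as the main obstacle, but the appeal to \cite{DehyKeller08} is too vague to carry the weight. The results there (Theorems~2.3, 2.6, 3.1) control the index of a rigid object under a change of cluster-tilting object; they do not directly assert that for an exchange pair one of the two connecting maps $G\epsilon'$, $G\epsilon''$ must vanish. What the GLS argument actually does, and what Lemma~\ref{lem3} is designed to enable here, is to work on the $C$-module side: one shows that $\Ext^1_F(R_k^*,R_k)\oplus\Ext^1_F(R_k,R_k^*)$, which by Lemma~\ref{lem3} is $\Ext^1_C(FR_k^*,FR_k)\oplus\Ext^1_C(FR_k,FR_k^*)$, is one-dimensional in total. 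Since each summand embeds in the corresponding one-dimensional $\Ext^1_\ce$, exactly one of them is nonzero, and that is precisely the dichotomy. If you want to salvage your index-based route, you would need a lemma of the form ``for any triangle $X\to Y\to Z\xrightarrow{c}\Sigma X$, one has $\ind_T(Y)=\ind_T(X)+\ind_T(Z)$ iff $\cc(T,c)=0$'' and then a reason why this cannot fail for both exchange triangles simultaneously; neither ingredient is supplied by the reference you cite.
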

\begin{proof}
 Using Lemma~\ref{lem3}, the proof is the same as
 that of proposition 15.4 in \cite{GeissLeclercSchroeer07b}.
\end{proof}

\begin{corollary}
 Under the assumptions of the above proposition,  put
\[
I'=\mbox{ind}_T(R')-\mbox{ind}_T(R_k),
\]
 \[
I''=\mbox{ind}_T(R'')-\mbox{ind}_T(R_k).
 \]
Then we have
 \begin{eqnarray*}
\mbox{ind}_T(R_k^*)=\left\{\begin{array}{ll} I',\, \mbox{if}\ \dimv FI'\geq \dimv FI'', \\
I'', \,\mbox{if}\ \dimv FI'\leq \dimv FI''.
\end{array}
\right.
\end{eqnarray*}
and exactly one of these cases occurs.
 Let $h(i) = [\mbox{ind}_T(R')-\mbox{ind}_T(R''):T_i]$, for $1\leq i \leq n$. Then  $h$ is a linear combination of the columns of $B(T)^0$.
\end{corollary}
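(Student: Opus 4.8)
The plan is to exploit two facts established (explicitly or implicitly) earlier: that $\mbox{ind}_T$ is additive along $F$-exact conflations, and that $F=\Hom_\ce(T,?)$ sends any object of $\ce$ to a $C$-module of projective dimension at most $1$ (this is visible from the $F$-exact resolution $0\to T_X^1\to T_X^0\to X\to 0$ defining $\mbox{ind}_T$). First I would record that if $0\to X\to Y\to Z\to 0$ is $F$-exact, then $0\to FX\to FY\to FZ\to 0$ is an exact sequence of modules of projective dimension $\le 1$; applying $\Hom_C(-,S_i)$ and taking the alternating sum (all higher $\Ext$ vanish) gives $\langle FY,S_i\rangle_\tau=\langle FX,S_i\rangle_\tau+\langle FZ,S_i\rangle_\tau$, that is, $\mbox{ind}_T(Y)=\mbox{ind}_T(X)+\mbox{ind}_T(Z)$. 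By Proposition~\ref{prop4} exactly one of $\eta'$, $\eta''$ is $F$-exact, so additivity immediately yields $\mbox{ind}_T(R_k^*)=I'$ when $\eta'$ is $F$-exact and $\mbox{ind}_T(R_k^*)=I''$ when $\eta''$ is $F$-exact.

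To match this dichotomy with the inequality on dimension vectors, I would note that $\dimv F\,\mbox{ind}_T(X)=\dimv FX$ for every $X$ (again from the $F$-exact defining conflation), so that $\dimv FI'=\dimv FR'-\dimv FR_k$ and $\dimv FI''=\dimv FR''-\dimv FR_k$; hence comparing $\dimv FI'$ with $\dimv FI''$ amounts to comparing $\dimv FR'$ with $\dimv FR''$. If $\eta'$ is $F$-exact then $\dimv FR'=\dimv FR_k+\dimv FR_k^*$, which by the $\max$-formula of Proposition~\ref{prop4} equals $\max\{\dimv FR',\dimv FR''\}$; thus $\dimv FR'\ge\dimv FR''$ componentwise, i.e. $\dimv FI'\ge\dimv FI''$, and symmetrically in the other case. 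Applying $F$ to the non-$F$-exact exchange conflation produces a nonzero cokernel, so the two sides differ by the dimension vector of a nonzero module and the inequality is strict in at least one coordinate; this shows the two conditions cannot hold simultaneously, giving ``exactly one of these cases occurs''.

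For the last assertion I would compute $h=I'-I''=\mbox{ind}_T(R')-\mbox{ind}_T(R'')$ explicitly, assuming say that $\eta'$ is $F$-exact, so that $h(i)=\langle FR_k,S_i\rangle_\tau+\langle FR_k^*,S_i\rangle_\tau-\langle FR'',S_i\rangle_\tau$. Applying $F$ to $\eta''$ gives a four-term exact sequence $0\to FR_k^*\to FR''\to FR_k\to E\to 0$, where $E$ is the image of the connecting map in $\Ext^1_\ce(T,R_k^*)=G\Sigma R_k^*$ and is therefore a $\ul{C}$-module. Splitting this into two short exact sequences and using that $\langle -,S_i\rangle_3$ is additive on conflations whose terms have projective dimension $\le 3$ (the only delicate point being the bound $\mathrm{pd}\,W\le 2$ for the intermediate image $W$), together with $\langle FX,S_i\rangle_3=\langle FX,S_i\rangle_\tau$ for the $\mathrm{pd}$-$\le 1$ modules $FR_k,FR_k^*,FR''$, I expect to obtain
\[
h(i)=\langle FR_k,S_i\rangle_\tau+\langle FR_k^*,S_i\rangle_\tau-\langle FR'',S_i\rangle_\tau=\langle E,S_i\rangle_3=\langle \dimv E,S_i\rangle_3 .
\]
By the identity $-\langle e,S_i\rangle_3=\sum_{j=1}^r b_{ij}e_j$ established in the proof of Proposition~\ref{prop3}, this reads $h=-B(T)^0\,(\dimv E)$, a linear combination of the columns of $B(T)^0$ with coefficient vector $-\dimv E$; the case where $\eta''$ is $F$-exact is symmetric.

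The main obstacle, I expect, is the bookkeeping around the non-$F$-exact sequence: one must identify the cokernel $E$ as a genuine $\ul{C}$-module, justify the projective-dimension bounds on the intermediate modules so that the degree-$3$ truncated Euler form is additive, and carefully keep the coordinate vector $h$ (in the basis of the $T_i$) distinct from the $C$-module dimension vectors $\dimv FI'$ and $\dimv FI''$, which govern the case distinction in the first part of the statement.
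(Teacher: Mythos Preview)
Your argument for the first assertion is the paper's: additivity of $\mbox{ind}_T$ on $F$-exact conflations combined with Proposition~\ref{prop4}. Your extra care in matching the dichotomy with the dimension-vector inequality and in arguing strictness is correct and slightly more detailed than what the paper writes.

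For the second assertion you take a genuinely different route. The paper uses the cluster-character exchange identity $X'_{R_k}X'_{R_k^*}=X'_{R'}+X'_{R''}$: writing each $X'_M$ in the form $\prod_i x_i^{[\operatorname{ind}_T(M):T_i]}H_M$ with $H_M$ a polynomial in the $\hat y_j$ having constant term $1$, and cancelling the index of the $F$-exact side, one obtains
\[
H_{R_k}H_{R_k^*}-H_{R'}=\prod_{i=1}^n x_i^{-h(i)}\,H_{R''}.
\]
A minimal-degree comparison forces the monomial $\prod_i x_i^{-h(i)}$ to be a monomial in the $\hat y_j$, whence $h$ lies in the column span of $B(T)^0$. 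By contrast, you compute $h$ directly from the four-term exact sequence obtained by applying $F$ to the non-$F$-exact conflation, identify the cokernel $E$ as a $\ul C$-module, control projective dimensions so that $\langle-,S_i\rangle_3$ is additive, and invoke $-\langle e,S_i\rangle_3=\sum_j b_{ij}e_j$ to get the explicit formula $h=-B(T)^0\,(\dimv E)$. Your approach is more homological and yields the bonus of an explicit integer coefficient vector; the paper's approach is shorter and leverages the cluster-character machinery already in place, but only gives existence of such a combination. Both are valid; your projective-dimension bookkeeping ($\mathrm{pd}\,W\le 2$, $\mathrm{pd}\,E\le 3$) is exactly what is needed and goes through as you describe.
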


\begin{proof}  The first part follows from Proposition ~\ref{prop4} directly, because the index is additive on $F$-exact sequences.

Since $(R_k, R_k^*)$ is an exchange pair, we have
\[
 X_{R_k}'X_{R_k^*}' =  X_{R'}'+X_{R''}'.
\]
For simplicity, we write
\[
 H_M = \sum_e\chi(Gr_e(\Ext_{\ce}(T, M))) \prod_{i=1}^r\hat{y}_i\, ^{e_i}
\]
for $X_M'$.
By Proposition~\ref{prop3},  we have
\begin{eqnarray*}
 \prod_{i=1}^nx_i^{[\operatorname{ind}_T(R_k)+\operatorname{ind}_T(R_k^*) : T_i]} H_{R_k}H_{R_k^*}
= \prod_{i=1}^nx_i^{[\operatorname{ind}_T(R'): T_i]}H_{R'} +\prod_{i=1}^nx_i^{[\operatorname{ind}_T(R''): T_i]}H_{R''}.
\end{eqnarray*}
Assume that $\mbox{ind}_T(R_k^*) = \mbox{ind}_T(R') - \mbox{ind}_T(R_k)$. We have
\[
 H_{R_k}H_{R_k^*} -H_{R'} = \prod_{i=1}^nx_i^{[\operatorname{ind}_T(R'')-\operatorname{ind}_T(R'): T_i]}H_{R''}.
\]
By comparing the  minimal total degree we get that
$\prod_{i=1}^nx_i^{[\operatorname{ind}_T(R'')-\operatorname{ind}_T(R'):
T_i]}$ is a monomial in $\hat{y}_i$, $1\leq i\leq r$, which implies
the result.
\end{proof}

\section{Frobenius 2-Calabi-Yau realizations}
\label{F2R}

Recall the bijection defined in section~\ref{ss:IceQuivers} between
antisymmetric integer $n\times n$-matrices and finite quivers
without loops nor 2-cycles with vertex set $\{1,2, \ldots, n\}$: The
quiver $Q$ corresponds to the matrix $B$ iff $b_{ij}>0$ exactly when
there are arrows from $i$ to $j$ in $Q$ and in this case their
number is $b_{ij}$.

We call an $n\times n$  antisymmetric integer matrix $B$ {\em
acyclic} if  the corresponding quiver $Q$ does not have oriented
cycles. Two matrices $B$ and $B'$ are called {\em mutation
equivalent} if we can obtain $B'$ from $B$ by  a series of matrix
mutations followed by conjugation with a permutation matrix.

Let $0\leq r<n$ be positive integers and let $(Q,F)$ be an ice
quiver (\confer~section~\ref{ss:IceQuivers})  with vertex set
$Q_0=\{1,\ldots, n\}$ and set of frozen vertices $F=\{r+1, \ldots,
n\}$. We define $\tilde{B}$ to be the $n\times r$ matrix formed by
the first $r$ columns of the skew-symmetric matrix associated with
$Q$ and we let $\ca(Q,F)=\ca(\tilde{B})$ be the cluster algebra with
coefficients associated with $\tilde{B}$, \confer
sections~\ref{ss:ClusterAlgebras} and \ref{ss:IceQuivers}.

\begin{definition} \label{def:Frobenius2CYRealization}
A {\em Frobenius 2-Calabi-Yau realization} of the cluster
algebra $\ca(\tilde{B})$ is a Frobenius category $\ce$ with a
cluster tilting object $T$ as in section~\ref{CC2FC} such that
\begin{itemize}
 \item[1)] $\ce$ has a cluster structure in the sense of
 \cite{BuanIyamaReitenScott07}, \confer
 section~\ref{ss:FrobeniusCategories};
 \item[2)] $T$ has exactly $n$ indecomposable pairwise non isomorphic summands
 $T_1$, $T_2$, $\ldots$, $T_n$ and among these, precisely $T_{r+1}$, $\ldots$,
 $T_n$ are projectives;
\item[3)] The matrix $\tilde{B}$ equals the matrix formed by the first
$r$ columns of the antisymmetric matrix associated with the quiver
of the endomorphism algebra of $T$ in $\ce$.
\end{itemize}
\end{definition}

\begin{remark} \label{remark:No2CYFrobeniusRealization}
Suppose we have a Frobenius $2$-CY realization of a cluster algebra
$\ca(Q,F)$ as above. Let $1\leq s\leq r$. Then by
Lemma~\ref{lemma:ExactMultiplicities} b), we have conflations
\[
 0\to T_s^*\to E\to T_s\to 0,
\]
\[
 0\to T_s\to E'\to T_s^*\to 0.
\]
Here the middle terms are the sums
\[
 E = \bigoplus_{b_{is}>0} T_i^{b_{is}} \ko E' = \bigoplus_{b_{is}<0} T_i^{-b_{is}}.
\]
Therefore, none of the first $r$ vertices of $Q$ can be a source or a sink.\\
\end{remark}

\begin{example} \label{example:Frobenius2CYRealization}
All quivers obtained from Theorem~2.3 of \cite{GeissLeclercSchroeer07b}
and, more generally, from Theorem~II.4.1 of \cite{BuanIyamaReitenScott07} admit Frobenius
$2$-Calabi-Yau realizations.
We illustrate this on the following specific case taken
from section~II.4 of [loc.~cit.]. Let $\Delta$ be the graph
\[
\xymatrix@=0.4cm{ &2\ar@{-}[dr]\\ 1\ar@{-}[ur]\ar@{-}[rr] &&3 \ .}
\]
Let $\Lambda$ be the completion of the preprojective algebra of
$\Delta$ and $W$ the Weyl group associated with $\Delta$. Let
$w$ be the element of $W$ given by the reduced word
$s_2s_1s_2s_3s_2$. Let $e_i$,
$i=1, 2, 3$, be the primitive idempotents corresponding to the
vertices of $\Delta$. Let $I_i=\Lambda(1-e_i)\Lambda$.
By Theorem~II.2.8 of \cite{BuanIyamaReitenScott07}, the category $\mbox{Sub}\,\Lambda/I_w$
formed by all $\Lambda$-submodules of finite direct sums of
copies of $\Lambda/I_w$ is a Frobenius category whose associated
stable category is $2$-Calabi-Yau; moreover, it contains
the cluster-tilting object
\[
T=\Lambda/I_2\oplus \Lambda/I_2I_1\oplus
\Lambda/I_2I_1I_2\oplus \Lambda/I_2I_1I_2I_3 \oplus
\Lambda/I_w.
\]
According to Proposition~II.1.11 of [loc.~cit.], in this
decomposition, each direct factor differs from the preceding one by
one indecomposable direct summand $T_i$, $1\leq i\leq 5$, and among
these, exactly $T_3$, $T_4$ and $T_5$ are projective-injective.
Moreover, by Theorem~II.4.1 of [loc.~cit.], the quiver of the
cluster-tilting object is
\[\xymatrix@=0.5cm{& T_1\ar[dr] \\
T_2\ar[ur]\ar[d] && T_3\ar[d]\ar[dll]\\
T_4\ar[rr]&&T_5\ar[ull] \ .}
\]
Using Theorem~I.1.6 of  \cite{BuanIyamaReitenScott07}, one can easily show that the
category $\mbox{Sub}\,\Lambda/I_w$ is a Frobenius 2-Calabi-Yau
realization of the cluster algebra $\ca(\tilde{B})$ given
by the matrix
\begin{eqnarray*}
 \tilde{B}=\left(\begin{array}{cc} 0&-1\\1&0\\-1&0\\0&-1\\0&1
        \end{array}
\right).
\end{eqnarray*}
\end{example}

We return to the general setup. Following \cite{DehyKeller08} we define a
cluster-tilting object $T'$ of $\ce$ to be {\em reachable from $T$}
if it is obtained from $T$ by a finite sequence of mutations. We
define an indecomposable rigid objects $M$ to be {\em reachable from
$T$} if it occurs as a direct factor of a cluster-tilting object
reachable from $T$.

\begin{theorem}\label{thm2}
Let $1<r\leq n$ be integers and $\ca(\tilde{B})$ the cluster
algebras with coefficients associated with an initial $n\times
r$-matrix $\tilde{B}$ of maximal rank. Suppose that $\ca(\tilde{B})$
admits a Frobenius $2$-CY realization $\ce$ with cluster tilting
object $T$.
\begin{itemize}
\item [a)] The map $M\mapsto X_M'$ induces a bijection from the set
of isomorphism classes of indecomposable rigid non projective
objects of $\ce$ reachable from $T$ onto the set of cluster
variables of $\ca(\tilde{B})$. Under this bijection, the
cluster-tilting objects reachable from $T$ correspond to the
clusters of $\ca(\tilde{B})$.
\item [b)] The map $M\mapsto \mbox{ind}_T(M)$ is a bijection from
the set of isomorphism classes of indecomposable rigid non
projective objects of $\ce$ reachable from $T$ onto the set of
$\mathbf{g}$-vectors of cluster variables of $\ca(\tilde{B})$.
\end{itemize}
\end{theorem}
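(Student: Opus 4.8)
The plan is to read off part a) from the exchange-relation axioms for the cluster character $X'$ proved in Theorem~\ref{thm1}, and then to obtain part b) from the index computation of Proposition~\ref{prop3} combined with the injectivity results of Corollary~\ref{cor1} and Lemma~\ref{lem2}. For the surjectivity in a) I would run the induction over the $r$-regular tree $\T_r$ used in the proof of Proposition~\ref{prop:ClusterCharacterProperties}, but carried out with coefficients. Fixing an initial vertex $t_0$, attach to each vertex $t$ the reachable cluster-tilting object $T_t$ produced from $T$ by the corresponding mutation sequence; this is well defined by conditions 0) and 1) of Definition~\ref{def:ExactClusterStructure}. Each $T_t$ has non-projective indecomposable summands $T_{t,1},\ldots,T_{t,r}$ together with the projective-injective summands $T_{r+1},\ldots,T_n$, which lie in every cluster-tilting subcategory and are therefore fixed by all mutations. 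I claim that $X'_{T_{t,i}}=x_{t,i}$ for $1\le i\le r$, where $x_{t,i}$ is the cluster variable of $\ca(\tilde B)$ at $t$. The base case is Theorem~\ref{thm1}~a). For the inductive step across an edge labelled $s$ joining $t$ to $t'$, Lemma~\ref{lemma:ExactMultiplicities} furnishes the two exchange conflations whose middle terms $E$, $E'$ have summand multiplicities dictated by the quiver of $\add(T_t)$, and Theorem~\ref{thm1}~c)--d) converts them into $X'_{T_{t,s}}X'_{T_{t',s}}=X'_E+X'_{E'}$. The point demanding care is that this agrees with the exchange relation of $\ca(\tilde B)$: condition 3) of Definition~\ref{def:Frobenius2CYRealization} identifies $\tilde B$ with the first $r$ columns of $B(T)$, so that the coefficient rows $r<i\le n$, i.e. the projective summands, enter $E$ and $E'$ with exactly the exponents $[b_{is}]_+$ and $[-b_{is}]_+$, while condition 4) of Definition~\ref{def:ExactClusterStructure} makes quiver mutation match matrix mutation. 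Granting the claim, every cluster variable is some $X'_{T_{t,i}}$, giving the desired surjection.

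Since every reachable rigid non-projective indecomposable occurs as a summand $T_{t,i}$ of some $T_t$, the displayed claim shows that $M\mapsto X'_M$ sends these objects onto the cluster variables; injectivity is then immediate from Corollary~\ref{cor1}~a). For the second assertion of a), the cluster at $t$ is exactly $\{X'_{T_{t,i}}:1\le i\le r\}$, so it is realized by $T_t$ and every cluster arises this way. Injectivity of the correspondence between reachable cluster-tilting objects and clusters follows because all such objects share the same projective part $T_{r+1}\oplus\cdots\oplus T_n$ and are thus determined by their non-projective summands, on which $X'$ is injective.

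For b), Proposition~\ref{prop3} gives $g_T(X'_M)(i)=[\operatorname{ind}_T(M):T_i]$ for $1\le i\le r$, and since $\tilde B=B(T)^0$ this is precisely the $\mathbf g$-vector of the cluster variable $X'_M$ in $\ca(\tilde B)$; combined with a), the map $M\mapsto \operatorname{ind}_T(M)$, read through its first $r$ coordinates, surjects onto the set of $\mathbf g$-vectors of cluster variables. The real content is injectivity, and here lies the main obstacle: the $\mathbf g$-vector records only the first $r$ coordinates of the index, whereas the invariant controlled by Lemma~\ref{lem2} is the full index in $K_0(\add T)\cong\Z^n$. I would close this gap exactly as in the proof of Lemma~\ref{lem2}: those first $r$ coordinates are precisely the stable index $\operatorname{ind}_{\ul T}(M)$ in the $2$-Calabi-Yau category $\cc=\ul\ce$, the projective summands being annihilated in the stable category, so an equality of $\mathbf g$-vectors forces $\operatorname{ind}_{\ul T}(M)=\operatorname{ind}_{\ul T}(M')$; Theorem~2.3 of \cite{DehyKeller08} then yields $M\cong M'$ in $\cc$, and since $M$, $M'$ are non-projective indecomposables of $\ce$, Krull--Schmidt upgrades this to $M\cong M'$ in $\ce$. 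Establishing that the truncated index is still a complete invariant of the reachable rigid objects is what I expect to require the most care.
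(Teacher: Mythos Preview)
Your proposal is correct and follows the same approach as the paper: surjectivity in a) from Theorem~\ref{thm1} together with the cluster structure (the paper leaves the tree induction implicit), injectivity from Corollary~\ref{cor1}~a) (equivalently Lemma~\ref{lem2} plus Proposition~\ref{prop3}), and part~b) from Proposition~\ref{prop3} combined with Lemma~\ref{lem2}. You are in fact more careful than the paper on one point: the paper simply cites Lemma~\ref{lem2} for the injectivity in b), whereas you correctly observe that the $\mathbf g$-vector retains only the first $r$ index coordinates (the stable index) and that Theorem~2.3 of \cite{DehyKeller08} applied in $\underline{\ce}$, followed by lifting to $\ce$ for non-projective indecomposables, is what actually closes the gap.
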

\begin{proof}
 a) It follows from Theorem~\ref{thm1} c) that $X_M'$ is a cluster variable
 for each indecomposable rigid $M$ reachable from $T$ and from the existence
 of a cluster structure on $\ce$ that the map $M\mapsto X_M'$
 is a surjection onto the set of cluster variables. The injectivity of the map
 $M\mapsto X_M'$ follows from Lemma~\ref{lem2} and Proposition~\ref{prop3}.
 The second statement follows from the first one and the fact that
 $\ce$ has a cluster structure.
 b) The map is injective by Lemma~\ref{lem2}. It is surjective thanks
 to part a)  and Proposition~\ref{prop3}.
\end{proof}

\begin{theorem}\label{prop5}
Let $1<r\leq n$ be integers and $\ca(\tilde{B})$ the cluster
algebras with coefficients associated with an initial $n\times
r$-matrix $\tilde{B}$ of maximal rank. Suppose that $\ca(\tilde{B})$
admits a Frobenius $2$-CY realization $\ce$ with cluster tilting
object $T$.
\begin{itemize}
\item [a)]Conjecture 7.2 of \cite{FominZelevinsky07} holds for $\ca$,
  {\em i.e.} cluster monomials are linearly independent.
\item [b)]Conjecture 7.10 of \cite{FominZelevinsky07} holds for $\ca$, {\em i.e.}  \\
  1) Different cluster monomials have different $\mathbf{g}$-vectors with respect to a given initial seed.\\
  2) The $\mathbf{g}$-vectors of the cluster variables in any given
  cluster form a $\mathbb{Z}$-basis of the lattice $\mathbb{Z}^r$.
\item [c)] Conjecture 7.12 of \cite{FominZelevinsky07} holds for
  $\ca$, {\em i.e.} if $(g_1,\ldots, g_r)$ and $(g_1',\ldots,g_r')$
  are the $\mathbf{g}$-vectors of one and the same cluster variable
  with respect to two clusters $t$ and $t'$ related by the mutation at
  $l$, then we have
\begin{eqnarray*}
  g_j'=\left\{\begin{array}{ll} -g_l\ \ \ \ \ \ \ \ \ \ \ \ \ \ \, \quad   \ \ \ \ \ \quad \quad \quad \mbox{if}\ \, j=l\\
      g_j+[b_{jl}]_+g_l-b_{jl}\min(g_l,0)\ \ \ \mbox{if}\ \, j\neq l
             \end{array}
\right.
\end{eqnarray*}
where the $b_{ij}$ are the entries of the $r\times r$-matrix $B$
associated with $t$ and we write $[x]_+$ for $\mbox{max}(x,0)$ for any
integer $x$.
\end{itemize}
\end{theorem}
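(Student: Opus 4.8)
The plan is to reduce all three parts to the properties of the cluster character $X'_?$ and of the index $\mathrm{ind}_T$ established in Sections~\ref{CC2FC} and \ref{IG}, and then to feed these into the combinatorial results on indices in $2$-Calabi-Yau categories from \cite{DehyKeller08}. Throughout I use the dictionary of Theorem~\ref{thm2}: the cluster variables of $\ca(\tilde B)$ are precisely the $X'_M$ for $M$ an indecomposable rigid non projective object reachable from $T$, and the clusters correspond to the reachable cluster-tilting objects, with the $(n-r)$ projective (frozen) summands shared by all of them.

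For a), I would first observe that, via this dictionary together with the multiplicativity $X'_{L\oplus M}=X'_L X'_M$ of Theorem~\ref{thm1}~c), a cluster monomial is exactly an element $X'_M$ where $M$ lies in $\add(T')$ for some reachable cluster-tilting object $T'$ and has no non zero projective direct summand. Distinct cluster monomials correspond to pairwise non isomorphic such $M$, so their linear independence is the content of Corollary~\ref{cor1}~b). The refinement needed to pass from independence over $\Q$ to independence over the coefficient ring $\Z\P$ is supplied by part b)~1): since distinct cluster monomials have distinct $\mathbf g$-vectors, their leading monomials in the grading used in the proof of Corollary~\ref{cor1} already differ in the variables $x_1,\dots,x_r$, and a coefficient in $\Z\P=\Z[x_{r+1}^{\pm},\dots,x_n^{\pm}]$ cannot affect these.

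For b)~1), Proposition~\ref{prop3} identifies the $\mathbf g$-vector $g_T(X'_M)$ with the truncation $([\mathrm{ind}_T(M):T_i])_{1\le i\le r}$ of the index, and this truncation coincides with the index $\mathrm{ind}_{\ul T}(M)$ computed in the stable category $\cc=\ul\ce$ (the projective summands of a $T$-resolution die in $\cc$, so the first $r$ coordinates are preserved). A cluster monomial is represented by a rigid $M$ without projective summands; if two cluster monomials had equal $\mathbf g$-vectors, the associated rigid objects would have equal index in $\cc$, hence be isomorphic in $\cc$ by Theorem~2.3 of \cite{DehyKeller08}, and, having no projective summand, be isomorphic already in $\ce$ — so the monomials agree. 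For b)~2), the $r$ cluster variables of the cluster attached to a reachable $T'$ are the $X'_{T'_i}$ for the non projective summands $T'_i$, and their $\mathbf g$-vectors are the indices $\mathrm{ind}_{\ul T}(\ul{T'_i})$ in $\cc$; these form a $\Z$-basis of $K_0(\add\ul T)\cong\Z^r$ because the indices of the indecomposable summands of a cluster-tilting object form a basis \cite{DehyKeller08}.

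Part c) is the main obstacle. Applying Proposition~\ref{prop3} with the two initial objects $T$ and $\tilde T=\mu_l T$ (any reachable cluster-tilting object may play the role of the initial one), the two $\mathbf g$-vectors of a fixed reachable rigid $M$ become the truncated indices $\mathrm{ind}_T(M)$ and $\mathrm{ind}_{\tilde T}(M)$, so the statement reduces to a purely categorical transformation rule for the index under one mutation $\mu_l$. I would derive this from the exchange conflations
\[
T_l^*\to E\to T_l\to \Sigma T_l^* \quad\text{and}\quad T_l\to E'\to T_l^*\to\Sigma T_l,
\]
where Lemma~\ref{lemma:ExactMultiplicities}~b) gives $E=\bigoplus_i T_i^{[b_{il}]_+}$ and $E'=\bigoplus_i T_i^{[-b_{il}]_+}$ with $b_{il}=b_{il}(T)$: expressing $\mathrm{ind}_{\tilde T}$ of each $T_i$ in terms of the $[\tilde T_j]$, the coordinate at $l$ flips sign, while for $i\neq l$ one picks up the multiplicities in $E$ or in $E'$ according to $\sgn(g_l)$, which is exactly the piecewise-linear rule of \cite{DehyKeller08} and hence the rule of Conjecture~7.12 — this is the equality of $\mathbf g$-vectors and $\mathbf g^\dagger$-vectors announced in the introduction. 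The delicate point is the sign bookkeeping: one must verify that the case distinction on $\sgn(g_l)$ produced by the two triangles reproduces the precise combination $[b_{jl}]_+ g_l-b_{jl}\min(g_l,0)$ (using $[x]_+-x=[-x]_+$), and that the conventions for $B(T)$ — arrows versus matrix entries, truncation to the first $r$ columns, and the passive role of the frozen/projective summands — match those of \cite{FominZelevinsky07}.
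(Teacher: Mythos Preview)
Your proposal is correct and follows essentially the same route as the paper: part a) via Theorem~\ref{thm2} and Corollary~\ref{cor1}~b); part b) via Proposition~\ref{prop3} together with Theorems~2.3 and 2.6 of \cite{DehyKeller08}; part c) via Proposition~\ref{prop3} and the index-transformation rule under mutation from \cite{DehyKeller08}. The only noticeable differences are cosmetic: for c) the paper invokes Theorem~3.1 of \cite{DehyKeller08} directly (stated in terms of the maps $\phi_\pm$) rather than re-deriving the piecewise-linear rule from the exchange triangles as you sketch, and your extra remark in a) about upgrading $\Q$-linear independence to $\Z\P$-linear independence is not addressed in the paper's proof here (the analogous refinement appears only later, in Theorem~\ref{thm3}~c)).
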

\begin{proof} a) By Theorem~\ref{thm2} a), each cluster monomial $m$ is
the image $X'_M$ of a rigid object $M$ of $\ce$, where $M$ does not
have any non zero projective direct factor. Moreover, such an object
$M$ is unique up to isomorphism. Thus, given a set $m_1$, \ldots,
$m_N$ of pairwise distinct cluster monomials, we obtain a set $M_1$,
\ldots $M_N$ of pairwise non isomorphic rigid objects without
projective direct factors such that $X'_{M_i}=m_i$ for $1\leq i\leq
N$. Thus, by Corollary~\ref{cor1} b), the images $X'_{M_i}=m_i$ of
the $M_i$ are not only pairwise distinct but in fact linearly
independent.

b) Let us prove 1). Let $m$ and $m'$ be two distinct cluster
monomials. We would like to compare their $\mathbf{g}$-vectors with
respect to a given initial cluster. By theorem~\ref{thm2} a), we may
assume that this given cluster consists of the images under
$M\mapsto X'_M$ of the indecomposable direct factors of $T$. Still
by theorem~\ref{thm2} a), the monomials $m$ and $m'$ are the images
$X'_M$ and $X'_{M'}$ of two non isomorphic rigid objects $M$ and
$M'$ of $\ce$ without non zero projective direct factors. Thus $M$
and $M'$ are still non isomorphic in the stable category
$\cc=\ul{\ce}$. But by theorem~2.3 of \cite{DehyKeller08}, non
isomorphic rigid objects have distinct indices $\ind_T(M)$ and
$\ind_T(M')$. Therefore, they have distinct $\mathbf{g}$-vectors by
Proposition~\ref{prop3}. Now let us prove 2). Let a cluster
$\mathbf{x}'$ be given. By theorem~\ref{thm2} a), the variables
$x'_i$ in $\mathbf{x}'$ are the images under $M\mapsto X'_M$ of the
indecomposable non projective direct summands $T'_i$ of a
cluster-tilting object $T'$ reachable from $T$. By
Proposition~\ref{prop3}, the $\mathbf{g}$-vector of each $x'_i$ is
the index of $T'_i$. Now by Theorem~2.6 of \cite{DehyKeller08}, the
indices of the indecomposable direct factors of a cluster-tilting object form a
basis of the lattice $K_0(\add \ul{T})$, where $\ul{T}$ is the image
of $T$ in $\cc$. Thus the $\mathbf{g}$-vectors of the $x'_i$ form a
basis of the lattice $\Z^r$.

c) By Theorem~\ref{thm2} a), we may assume that under the maps
$M \mapsto X'_M$, the clusters $t$ and
$t'$ correspond  to the cluster-tilting object
$T$ and another cluster-tilting object $T'$ obtained from $T$ by
mutation at the non projective indecomposable direct factor $T_l$.
Moreover, the given cluster variable $x$ corresponds to some non
projective rigid indecomposable object $X$. By
Proposition~\ref{prop3}, the $\mathbf{g}$-vectors of $x$ with
respect to $t$ and $t'$ are given by the components of the indices
$\ind_T(X)$ and $\ind_{T'}(X)$ in the bases formed by the
$\ind_T(T_i)$, $1\leq i\leq r$, respectively the $\ind_{T'}(T'_i)$,
$1\leq i\leq r$, where the $T_i$ and the $T'_i$ are the non
projective indecomposable direct factors of $T$ respectively $T'$.
Now Theorem~3.1 of \cite{DehyKeller08} tells us exactly how
$\ind_T(X)$ and $\ind_{T'}(X)$ are related: Let
\[
\xymatrix{T_l \ar[r] & E' \ar[r] & T_l^* \ar[r] & \Sigma T_l}
\mbox{ and }
\xymatrix{T_l^* \ar[r] & E \ar[r] & T_l \ar[r] & \Sigma T_l^*}
\]
be the exchange triangles associated with the mutation from $T$ to $T'$.
Let
\[
\phi_+ : K_0(\add T) \to K_0 (\add T') \mbox{ and }
\phi_- : K_0(\add T) \to K_0 (\add T')
\]
be the linear maps which send the classes $[T_i]$, $i\neq l$,
to themselves and send $[T_l]$ to
\[
\phi_+([T_l]) = [E]-[T_l^*] \mbox{ respectively }
\phi_-([T_l]) = [E'] - [T_l^*].
\]
Then by Theorem~3.1 of \cite{DehyKeller08}, we have
\[
\ind_{T'}(X) = \left\{ \begin{array}{ll}
\phi_+(\ind_T(X)) & \mbox{ if } [\ind_T(X): T_l]\geq 0 \\
\phi_-(\ind_T(X)) & \mbox{ if } [\ind_T(X): T_l]\leq 0.
\end{array} \right.
\]
We leave it to the reader to check that this yields exactly
the rule given in the assertion.
\end{proof}

Let $\tilde{B}$ be a $2r\times r$ matrix whose principal ($i.e.$ top
$r\times r$) part $B_0$ is mutation equivalent to an acyclic matrix,
and whose complementary ($i.e.$ bottom) part is the $r\times r$
identity matrix. Let $\ca(\tilde{B})$ be the cluster algebra with the
initial seed $({\bf x}, \tilde{B})$.

\begin{theorem}
  With the above notation, the cluster algebra $\ca(\tilde{B})$ does
  not admit a Frobenius $2$-CY realization.
\end{theorem}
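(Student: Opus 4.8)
The plan is to derive a contradiction from Remark~\ref{remark:No2CYFrobeniusRealization}, which says that in any Frobenius $2$-CY realization none of the first $r$ vertices of the quiver of $\End_\ce(T)$ may be a source or a sink; since the realization carries a cluster structure, this constraint may be applied at every cluster-tilting object reachable from $T$. Write $B(T)=(b_{ij})$ for the antisymmetric $2r\times 2r$ matrix attached to $\add T$. By condition~3) of Definition~\ref{def:Frobenius2CYRealization} its first $r$ columns are $\tilde{B}$, so in block form
\[
B(T)=\begin{pmatrix} B_0 & -I_r\\ I_r & B_1\end{pmatrix},
\]
where $B_1$ is the unknown antisymmetric block recording arrows among the projective summands $T_{r+1},\ldots,T_{2r}$. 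First I would read off the initial-seed geometry: for $1\le s\le r$ the bottom block $I_r$ gives a single arrow $T_{r+s}\to T_s$ into the mutable vertex $s$, while the top block $-I_r$ shows there is no arrow from $s$ to any projective vertex. Hence each mutable vertex $s$ has an incoming arrow (so it is never a source at the initial seed), and $s$ is a sink of $Q(T)$ if and only if it is a sink of the quiver of $B_0$. In particular, if $B_0$ is itself acyclic we are done at once: an acyclic quiver has a sink, which is then a sink of $Q(T)$, contradicting Remark~\ref{remark:No2CYFrobeniusRealization}.

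For the general case one must pass to a seed whose principal part is acyclic. Mutating $T$ at a mutable summand $T_s$ ($s\le r$) leaves the projectives fixed, and for $i,j\le r$ the new entry $b'_{ij}$ involves only the products $b_{is}b_{sj}$ with $i,s,j\le r$; thus the top-left block transforms exactly by the matrix mutation $\mu_s$ of $B_0$ alone. Since $B_0$ is mutation equivalent to an acyclic matrix, a finite sequence of such mutations yields a reachable cluster-tilting object $T^*$ whose matrix has acyclic top-left block $B_0^*$. (Using that principal coefficients create no arrows from mutable to projective vertices, one checks that $B_0^*$ also equals the quiver of $\End_{\cc}(\ul{T^*})$, so that the stable category is genuinely acyclic; this is reassuring but not needed below.)

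At $t^*$ the matrix is $B(T^*)=\left(\begin{smallmatrix} B_0^* & -C^{*t}\\ C^* & B_1^*\end{smallmatrix}\right)$, and whether the arrows between a mutable vertex $j$ and the projectives point into or out of $j$ is dictated by the $j$-th column of the coefficient block $C^*$. Here I would invoke sign-coherence of the $c$-vectors — the columns of $C^*$ are entrywise $\ge 0$ (``green'') or entrywise $\le 0$ (``red'') — which is available because $\ca(\tilde{B})$ is categorified by $\ce$. A green vertex then receives arrows only from projectives and emits none, whereas a red vertex emits arrows only to projectives and receives none. Feeding this into Remark~\ref{remark:No2CYFrobeniusRealization}: a green vertex can never be a sink of $B_0^*$ and a red vertex can never be a source of $B_0^*$. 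Since $B_0^*$ is acyclic and therefore has both a sink and a source, the realization forces every sink of $B_0^*$ to be red and every source to be green.

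The remaining, and in my view hardest, step is to show that this rigid sign pattern is impossible for a seed reachable from the initial one. This is exactly where principal coefficients are used: at the initial seed $C=I_r$, so every $c$-vector is positive and all vertices are green. I would track the signs of the $c$-vectors along a mutation sequence carrying $B_0$ to $B_0^*$ and show that a sink with a positive $c$-vector (or a source with a negative one) must appear — equivalently, that from the all-green seed one cannot reach an acyclic principal part all of whose sinks are red and all of whose sources are green. Once this sign bookkeeping is in place, the resulting green sink (or red source) is, by the previous paragraph, a genuine sink (or source) of $Q(T^*)$, contradicting Remark~\ref{remark:No2CYFrobeniusRealization} and showing that $\ca(\tilde{B})$ admits no Frobenius $2$-CY realization.
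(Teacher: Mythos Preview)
Your treatment of the case where $B_0$ is already acyclic is correct, and the paper itself notes in the remark following the theorem that this case is immediate from Remark~\ref{remark:No2CYFrobeniusRealization}. The difficulty lies entirely in the mutation-equivalent-to-acyclic case, and here your proposal has two genuine gaps.

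First, you invoke sign-coherence of $c$-vectors as ``available because $\ca(\tilde{B})$ is categorified by $\ce$,'' but you do not justify this. The categorical argument used in the paper's final proposition (a sign-incoherent $c$-vector would, after one mutation, force an arrow between two frozen vertices) works in the cluster category of $\tilde{Q}$ because there the frozen objects are pairwise non-isomorphic simples with no morphisms between them. In a general Frobenius $2$-CY realization, arrows between projective-injective summands are allowed --- that is precisely your unknown block $B_1$ --- so this argument does not transfer.

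Second, and more seriously, your ``hardest step'' is not merely incomplete but false as stated. You claim that from the all-green initial seed one cannot reach an acyclic principal part in which every sink is red and every source is green. Take $B_0$ for the quiver $1\to 2$ with principal coefficients and mutate at vertex~$1$: the principal part becomes $2\to 1$, with $c$-vectors $c_1'=(-1,0)$ (red) and $c_2'=(1,1)$ (green); the unique sink $1$ is red and the unique source $2$ is green. So the forbidden pattern \emph{does} occur at reachable acyclic seeds. What you would actually need is the existence of \emph{some} reachable acyclic seed with a green sink or a red source; you give no argument for this when $B_0$ is not itself acyclic, and it is not clear how to produce one.

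The paper's proof avoids this combinatorics entirely. It first uses the acyclicity hypothesis together with the Keller--Reiten recognition theorem to identify $\ul{\ce}$ with a cluster category, so that every rigid indecomposable of $\ce$ is reachable from $T$. Then, for any such $M$ not among the $T_i$ with $i>r$, it writes $\mathcal{F}_M = X'_M|_{x_1=\cdots=x_r=1}$ and combines Fomin--Zelevinsky's result that $\mathcal{F}_M$ is not divisible by any $x_i$ ($i>r$) with the elementary inequality $\langle FM,S_i\rangle_\tau\ge 0$ for $i>r$ to conclude $[\operatorname{ind}_T(M):T_i]=0$ for all $i>r$. Finally it exhibits $M=\Sigma T_1$: the conflation $0\to T_1\to P\to \Sigma T_1\to 0$ with $P$ projective shows $\operatorname{ind}_T(\Sigma T_1)=[P]-[T_1]$, so some $T_i$ with $i>r$ appears with nonzero coefficient, a contradiction.
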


\begin{proof}
  Suppose that $\ca(\tilde{B})$ has a Frobenius $2$-CY realization
  $\ce$.  Then there is a cluster tilting object $T$ of $\ce$ with
  $2r$ indecomposable direct summands.  Then we have $B(T)^0 =
  \tilde{B}$. Since $B_0$ is mutation equivalent to an acyclic matrix
  $B_c$ by a series of mutations, we have a cluster tilting object
  $T'$ such that the quiver of the stable endomorphism algebra of $T'$
  corresponds to $B_c$. Let $A$ be the stable endomorphism algebra of
  $T'$.  By the main theorem of \cite{KellerReiten06}, we have a triangle
  equivalence $\ul{\ce}\simeq \cc_{A}$, where $\cc_{A}$ is the cluster
  category of $ A$. Thus the cluster tilting graph of $\ce$ is
  connected and every rigid object of $\ce$ can be extended to a
  cluster tilting object of $\ce$.

  Let $F = \Hom_{\ce}(T,?)$. Let $S_i$, $1\leq i\leq 2r$, be the
  simple modules of $\End_{\ce}(T)$. For each object $M$ of $\ce$, we
  have the Laurent polynomial
\[
 X_M'=\prod_{i=1}^{2r}x_i^{\langle FM, S_i\rangle_{\tau}} \sum_e \chi(Gr_e (\Ext^1_{\ce}(T,M))) \prod_{i=1}^{2r} x_i^{- \langle e, S_i\rangle_3}.
\]
 Let
\[
 y_j=\prod_{i=1}^{2r} x_i^{b_{ij}},  1\leq j\leq r.
\]
As in Proposition~\ref{prop3}, we can rewrite $X_M'$ as
\[
 X_M' =  \prod_{i=1}^{2r}x_i^{\langle FM, S_i\rangle_{\tau}} (1+ \sum_{e\neq 0} \chi(Gr_e(\Ext_{\ce}^1(T,M))) \prod_{i=1}^r y_j^{e_j}),
\]
where $e_j$ is the $j$-th coordinate of $e$ in the basis of the $S_i$, $1\leq i\leq 2r$.
If the indecomposable object $M$ is rigid and not isomorphic to $T_i$ for $r< i\leq 2r$, then $X_M'$ is a  cluster variable of $\ca(\tilde{B})$. By the definition of the rational function $\mathcal{F}_{l,t}$ associated with the cluster variable $x_{l,t}$ in \cite{FominZelevinsky07},
 we have
\begin{eqnarray*}
 \mathcal{F}_M &=&  X_M'(x_1=x_2=\ldots =x_r=1)\\
&=& \prod_{i=r+1}^{2r}x_i^{\langle FM, S_i\rangle_{\tau}} (1+\sum_{e\neq 0} \chi(Gr_e(\Ext_{\ce}^1(T,M))) \prod_{j=r+1}^{2r} x_j^{e_{j-r}}).
\end{eqnarray*}
Put
\[
 G_M= 1+\sum_{e\neq 0} \chi(Gr_e\Ext_{\ce}^1(T,M)) \prod_{j=r+1}^{2r} x_j^{e_{j-r}}.
\]
Note that $G_M$ is always a polynomial of $x_i$, $r+1\leq i\leq 2r$, with constant term 1.
By Proposition 5.2 in \cite{FominZelevinsky07}, we know that the polynomial $\mathcal{F}_M$ is not divisible by $x_i$, $r+1\leq i\leq  2r$.
Now for $i> r$, we have $\langle FM, S_i\rangle_{\tau}\geq 0$ in general,  which implies that $\langle FM, S_i\rangle_{\tau}=0$.
In particular, $\langle FM, S_i\rangle_{\tau} =  [\mbox{ind}_T(M) : T_i] = 0$, for $r+1\leq i \leq 2r$.
Consider $M=\Sigma T_1$, which is rigid and indecomposable, so $X_M'$ is a cluster variable of the cluster algebra $\ca(\tilde{B})$. But in the Frobenius category $\ce$ we have the conflation
\[
 0\to T_1\to P\to \Sigma T_1\to 0,
\]
where $P$ is an injective hull of $T_1$,
which implies
\[
 \mbox{ind}_T(M)=[P]-[T_1].
\]
Thus there is always some $r+1 \leq i\leq 2r$ such that  $[\mbox{ind}_T(M) : T_i] \neq  0$. Contradiction.
\end{proof}

\begin{remark} In the above notation, if $B_0$ is acyclic, then it is easy to
deduce that the cluster algebra $\ca(\tilde{B})$ does not have a
Frobenius $2$-CY realization. Indeed in this case, one of the first
$r$  vertices of $Q$  which corresponds to $\tilde{B}$ is always a
sink. This is incompatible with the existence of a Frobenius $2$-CY
realization by remark~\ref{remark:No2CYFrobeniusRealization}.
\end{remark}

\section{Triangulated 2-Calabi-Yau realizations}
\label{T2R}

\subsection{Definitions.}
Let $B=(b_{ij})_{n\times n}$ be an antisymmetric integer matrix and
$\ca(B)$ the associated cluster algebra. A $2$-Calabi-Yau
triangulated category $\cc$ is called a {\em triangulated
2-Calabi-Yau realization} of the matrix $B$ if $\cc$ admits a
cluster tilting object $T$ such that
\begin{itemize}
 \item $\cc$ has a cluster structure in the sense
 \cite{BuanIyamaReitenScott07}, \confer~section~\ref{ss:CalabiYauCategories};
\item $T$ has exactly $n$ non isomorphic indecomposable direct summands  $T_1$, $\ldots$, $T_n$;
\item The antisymmetric matrix $B(T)$ associated with the quiver of the
endomorphism algebra of $T$ equals $B$.
\end{itemize}
We denote a triangulated $2$-CY realization of $B$ by $\cc\supset
\add T$.

Let $n_1$ and $n_2$ be positive integers. Let $B_1$ and $B_2$ be
antisymmetric integer $n_1\times n_1$ resp. $n_2\times
n_2$-matrices. Let $B_{21}$ be an integer $n_2\times n_1$-matrix
with non negative entries. Let $\cc_i\supset \ct_i$ be a
triangulated $2$-CY realization of $B_i$, $i=1,2$. Let $B$ be the
matrix
\begin{eqnarray*}
 \left(\begin{array}{ll} B_1&-B_{21}^t\\B_{21}&\quad B_2
        \end{array}
\right).
\end{eqnarray*}

A {\em gluing} of $\cc_1\supset \ct_1$ with $\cc_2\supset \ct_2$
with respect to $B$ is  a triangulated $2$-CY realization
$\cc\supset \ct$ of $B$ endowed with full additive subcategories
$\ct_1'$ and $\ct_2'$ such that
\begin{itemize}
 \item $\Hom_{\cc}(\ct_1',\ct_2')=0$;
\item The set $\mbox{indec}(\ct)$ is the disjoint
union of $\mbox{indec}(\ct_1')$ with $\mbox{indec}(\ct_2')$;
\item There is a  triangle equivalence
\[
 ^{\perp}(\Sigma \ct_1')/(\ct_1')\stackrel{\sim}{\longrightarrow} \cc_2
\]
inducing an equivalence $\ct_2'\stackrel{\sim}{\longrightarrow}\ct_2$;
\item There is a triangle equivalence
\[
 ^{\perp}(\Sigma \ct_2')/(\ct_2')\stackrel{\sim}{\longrightarrow}\cc_1
\]
inducing an equivalence $\ct_1'\stackrel{\sim}{\longrightarrow} \ct_1$.
\end{itemize}
A {\em principal gluing} of $\cc_1\supset \ct_1$ is a gluing  of
$\cc_1\supset \ct_1$ with $\cc_2\supset \ct_2$ with respect to
\begin{eqnarray*}
 \left(\begin{array}{ll} B_1&-I_{n_1} \\ I_{n_1}&\quad 0 \end{array} \right),
\end{eqnarray*}
where $\cc_2$ is the cluster category of $(A_1)^{n_1}$ and $\ct_2$
the image of the subcategory of  finitely generated projective modules.

It is well known that each acyclic matrix $B$ admits a triangulated
$2$-CY realization $\cc_{Q_B}$, where $\cc_{Q_B}$ is the cluster
category of the quiver $Q_B$ corresponding to $B$. In the last
subsection, we will see that $\cc_{Q_B}$ does admit a principal
gluing.
\begin{conjecture} \label{conj:ExistenceOfGlueings}
If $\cc_1$ and $\cc_2$ are algebraic, a gluing exists for any matrix
$B_{21}$ with non negative entries.
\end{conjecture}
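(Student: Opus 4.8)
Since the statement is a conjecture, what follows is a strategy rather than a finished argument. The plan is to realize the gluing as a generalized cluster category in the sense of Amiot \cite{Amiot08a}: one builds a single $3$-Calabi-Yau differential graded (dg) algebra $\Gamma$ out of the data of $\cc_1$, $\cc_2$ and $B_{21}$, and sets $\cc=\cc_\Gamma=\per\Gamma/\cd_{fd}(\Gamma)$. The algebraicity hypothesis is exactly what makes this feasible: it supplies dg models of $\cc_1$ and $\cc_2$ that can be combined, so the whole construction is carried out at the dg level and only afterwards passed to the associated triangulated categories.

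First I would produce dg presentations of the given realizations. Using algebraicity together with the cluster-tilting objects $T_i$, one identifies each $\cc_i$ with $\cc_{\Gamma_i}$ for a Ginzburg-type dg algebra $\Gamma_i$ --- a deformed $3$-Calabi-Yau completion (in Keller's sense) of $\End_{\cc_i}(T_i)$ --- arranged so that $\ct_i=\add\Gamma_i$ and the quiver of $H^0(\Gamma_i)$ is the quiver of $B_i$, with potential $W_i$. Next I would glue these. The matrix $B_{21}$, having non-negative entries, is encoded by a bimodule $X$ of \emph{cross arrows} running only from the group-$2$ vertices to the group-$1$ vertices (none in the reverse direction, matching the upper-triangular shape of $B$). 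Assembling an idempotent-decomposed dg algebra $\Lambda$ with $e_i\Lambda e_i\simeq\Gamma_i$ in degree zero, $e_1\Lambda e_2\simeq X$ and $e_2\Lambda e_1=0$, I set $\Gamma=\Pi_3(\Lambda)$, the $3$-Calabi-Yau completion deformed by $W_1+W_2$, and $\cc=\cc_\Gamma$.

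Because the cross arrows are one-directional, the quiver of $H^0(\Gamma)$ has no $2$-cycles and is exactly the quiver of $B$, so $B(T)=B$; moreover any directed path meets the cross part at most once (once it has entered group $1$ it can never return to group $2$), so the finite-dimensionality of the two Jacobian algebras $H^0(\Gamma_i)$ forces $H^0(\Gamma)$ to be finite-dimensional. By Amiot's theorem $\cc$ is then $\Hom$-finite and $2$-Calabi-Yau with cluster-tilting object $\Gamma$. It remains to verify the gluing axioms, with $\ct_i'=\add(e_i\Gamma)$. Orthogonality $\Hom_\cc(\ct_1',\ct_2')=0$ follows from the absence of directed paths from group-$1$ to group-$2$ vertices; the decomposition of $\mathsf{indec}(\ct)$ is immediate from $\ct=\add\Gamma$; and the two equivalences ${}^\perp(\Sigma\ct_i')/(\ct_i')\iso\cc_j$ would come from the Calabi-Yau reduction of Iyama and Yoshino, after checking that deleting the group-$1$ vertices leaves the datum $(\mbox{quiver of }B_2,W_2)$ on the nose --- which holds because the potential has no cross terms --- and symmetrically on the other side.

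The main obstacle lies in the first two steps. The harder of the two is $\Hom$-finiteness of $\cc$: the ``a path crosses at most once'' argument is transparent for honest quivers with potential, but in the general dg (or $A_\infty$) setting --- which one is forced into, since it is not known that every algebraic $2$-Calabi-Yau category with a cluster-tilting object is the cluster category of a quiver with potential --- the cross bimodule and the deformation can interact in higher degrees, and excluding an infinite-dimensional $H^0(\Gamma)$ is where the genuine work lies. Producing the dg models $\Gamma_i$ in this generality, and matching each Calabi-Yau reduction with $\cc_j$ exactly rather than merely with some $2$-Calabi-Yau category carrying the right quiver, are the remaining technical points.
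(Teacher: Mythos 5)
The statement you were given is stated in the paper as a conjecture and is \emph{not} proved there; the only thing the paper offers, in the paragraph immediately following it, is the remark that when $\cc_1$ and $\cc_2$ are generalized cluster categories of Jacobi-finite quivers with potential in the sense of Amiot, it is easy to write down a glued quiver with potential realizing the block matrix $B$. Your proposal is essentially an expanded version of that same remark, and you are right to present it as a strategy rather than a proof: as written it does not settle the conjecture, and the obstacles you flag at the end are precisely the ones that matter.

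To be concrete about where the argument is genuinely open. First, algebraicity of $\cc_i$ gives a dg enhancement, but it does not give a Ginzburg-type $3$-Calabi-Yau dg algebra $\Gamma_i$ with $\cc_i\simeq\per\Gamma_i/\cd_{fd}(\Gamma_i)$ and $\ct_i=\add\Gamma_i$; the recognition statement you would need (every algebraic $2$-Calabi-Yau category with a cluster-tilting object is the generalized cluster category of a Jacobi-finite quiver with potential) is not available in this generality. Second, even granting dg models, the ``each path crosses at most once'' count proving finite-dimensionality of $H^0(\Gamma)$ is an argument about the degree-zero part of an honest Jacobian algebra; for a genuinely dg or $A_\infty$ gluing the higher components of the two models can interact through the cross bimodule, and $\Hom$-finiteness of the resulting quotient category is not formal. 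Third, identifying ${}^\perp(\Sigma\ct_i')/(\ct_i')$ with $\cc_j$ on the nose, rather than with some $2$-Calabi-Yau category carrying the right quiver, requires a compatibility of Calabi-Yau reduction with vertex deletion at the level of the dg models, which again is only established in the quiver-with-potential setting. So your write-up faithfully reproduces and elaborates the paper's heuristic and correctly locates the difficulties, but it should not be read as a proof of the conjecture --- the paper itself claims none, and explicitly restricts its ``evidence'' to the Jacobi-finite quiver-with-potential case.
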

Amiot's work \cite{Amiot08a} provides some evidence for the
conjecture: Indeed, if $\cc_1$ and $\cc_2$ are generalized cluster
categories \cite{Amiot08a} associated with Jacobi-finite quivers
with potential \cite{DerksenWeymanZelevinsky07}, it is easy to
construct a quiver with potential which provides a gluing as
required by the conjecture.

\subsection{Cluster algebras with coefficients.}
Let $B$ be an antisymmetric integer $n\times n$-matrix. Suppose that
the matrix $B$ admits a triangulated $2$-CY realization $\cc$ with
the
 cluster tilting subcategory $\ct=\add T$. Let $T_i$, $1\leq i\leq
n$, be the non isomorphic indecomposable direct summands of $T$. By
the definition, we have $B(T)=B$. The mutations of the  matrix $B$
correspond to the mutations of the cluster tilting object $T$. Fix
an integer $0< r\leq n$ and consider the submatrix $B^0$ of $B$
formed by the first $r$ columns of $B$. If $l\leq r$, then we have
\[
 \mu_l(B^0)=(\mu_l(B))^0,
\]
where $\mu_l$ is the mutation in the direction $l$.  Thus we can
view the cluster algebra $\ca(B^0)$ with coefficients as a
sub-cluster algebra of $\ca(B)$, \confer  Ch.~III  of \cite{BuanIyamaReitenScott07}.

Denote  by $\cp$ the full subcategory of $\cc$ whose objects are the
finite direct sums of copies of $T_{r+1},\ldots, T_n$.   We define a
subcategory of $\cc$
\[
 \cu=\,^\perp\!(\Sigma\cp)=\{X\in \cc| \Ext^1_{\cc}(T_i,X)=0\mbox{  for  }r<i\leq n\}.
\]
By Theorem~I.2.1 of \cite{BuanIyamaReitenScott07}, the quotient
category $\cu/\cp$ is a 2-Calabi-Yau triangulated category and the
projection $\cu \to \cu/\cp$ induces a bijection between the cluster
tilting subcategories of $\cc$ containing $\cp$ and the cluster
tilting subcategories of $\cu/\cp$. Thus, a mutation of a cluster
tilting object in $\cu/\cp$ can be viewed as a mutation of a cluster
tilting object in $\cu\subset\cc$ which does not affect the direct
summands $T_i$, $r<i\leq n$. This exactly corresponds to a mutation
of the matrix $B$ in one of the first $r$ directions. In particular,
a mutation of the cluster algebra $\ca(B^0)$ corresponds to a
mutation of a cluster tilting object in $\cu$.

Recall from section~\ref{ss:ClusterCharacters} that on $\cc$, we
have Palu's cluster character associated with $T$, which is given by
the formula
\[
X_M=X^T_M=\prod_{i=1}^{n}x_i^{-[\operatorname{coind}_T M: T_i]}
\sum_e\chi(Gr_e(\Hom_{\cc}(T, M)))\prod_{i=1}^{n}x_i^{\langle S_i,
e\rangle_a}.
\]
We consider the composition of this map with the shift:
\[
X_M'=X_{\Sigma M} =\prod_{i=1}^{n}x_i^{[\operatorname{ind}_T M: T_i]}
\sum_e\chi(Gr_e(\Hom_{\cc}(T, \Sigma M)))\prod_{i=1}^{n}x_i^{\langle
  S_i, e\rangle_a}.
\]
We consider the restriction of the map $M \mapsto X_M' $ to the
subcategory $\cu$. It follows from
Proposition~\ref{prop:ClusterCharacterProperties} that if $M$ is an
indecomposable rigid object reachable from $T$ in $\cu$, then $X_M'$
is a cluster variable of $\ca(B^0)$. We will rewrite this variable
so as to express its $\mathbf{g}$-vector (if it is defined) in terms
of the index of $M$: Let $M$ be an object of $\cu$. Then
$\Hom_{\cc}(T, \Sigma M)$ is an $\End_{\cc}(T)$-module which
vanishes at each vertex $r< i\leq n$. Let $e$ be the image
of $\Hom_{\cc}(T, \Sigma M)$ in the
Grothendieck group of $\mod \End_{\cc}(T)$. Let $e_j$ be the $j$-th
coordinate of $e$ with respect to the basis $S_i$, $1\leq i\leq n$.
We have
\begin{eqnarray*}
  \langle S_i, e\rangle_a&=&\langle S_i, e\rangle_{\tau} -\langle e, S_i\rangle_{\tau}\\
  &=& \sum_{j=1}^r e_j(\langle S_i, S_j\rangle_{\tau} - \langle S_j, S_i\rangle_{\tau}) \\
  &=& \sum_{j=1}^r e_j (\Ext^1_{\End_{\cc}(T)}(S_j,S_i)-\Ext^1_{\End_{\cc}(T)}(S_i,S_j))\\
  &=& \sum_{j=1}^r b_{ij}e_j.
\end{eqnarray*}
As in section~\ref{IG}, put
\[
 y_j=\prod_{i=1}^{n}x_i^{b_{ij}}\, , \mbox{for} \  1\leq j\leq r.
\]
Then $X_M'$ can be rewritten as
\[
X_M'=\prod_{i=1}^{n}x_i^{[\operatorname{ind}_T ( M):
T_i]}(1+\sum_{e\neq 0} \chi(Gr_e(\Hom_{\cc}(T, \Sigma
M)))\prod_{j=1}^ry_j^{e_j}).
\]
As in section~\ref{IG}, when $\rank\, B^0=r$, we can define the
$\mathbf{g}$-vector of $M\in \cu$ with respect to a cluster tilting
object $T$. Thus we have proved part a) of the following
proposition. We leave the easy proof of part b) to the reader.
\begin{proposition}\label{prop6} Suppose that $\rank\,B^0=r$. Let
$M$ be an object of $\cu$.
\begin{itemize}
\item[a)] The $\mathbf{g}$-vector of $X_M'$ with respect to the
initial cluster is given by
\[
g_T(X_M')(i)= [\mbox{ind}_T\, (M): T_i],\,\mbox{for}\,  1\leq i\leq r.
\]
\item[b)] The index of the image of $M$ in $\cu/\cp$ with respect to
the image of $T$ is
\[
\sum_{i=1}^r g_T(X_M')(i)[T_i].
\]
\end{itemize}
\end{proposition}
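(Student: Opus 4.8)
The plan is to show that the triangle in $\cc$ which computes $\ind_T(M)$ descends, under the projection $\pi\colon\cu\to\cu/\cp$, to a triangle in $\cu/\cp$ computing $\ind_{\ol{T}}(\ol{M})$, where $\ol{M}=\pi(M)$ and $\ol{T}=\pi(T)$. Recall from section~\ref{ss:ClusterCharacters} that there is a triangle
\[
T_1^M \to T_0^M \to M \to \Sigma T_1^M
\]
in $\cc$ with $T_0^M, T_1^M\in\add T$ and $\ind_T(M)=[T_0^M]-[T_1^M]$. Since $\add T\subseteq\cu$ and $M\in\cu$, every term of this triangle lies in $\cu$. Moreover $\pi(T_i)=0$ for $r<i\leq n$ while $\pi(T_i)=\ol{T}_i$ for $1\leq i\leq r$, so that $\ol{T}=\bigoplus_{i=1}^r\ol{T}_i$ is precisely the cluster-tilting object of $\cu/\cp$ attached to $T$ under the bijection of Theorem~I.2.1 of \cite{BuanIyamaReitenScott07}, and $[\pi(T_j^M):\ol{T}_i]=[T_j^M:T_i]$ for $1\leq i\leq r$.

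First I would recall how the shift functor $\langle 1\rangle$ of $\cu/\cp$ is built: for $X\in\cu$ one fixes a triangle $X\xrightarrow{\iota_X}P^X\to X\langle1\rangle\xrightarrow{\theta_X}\Sigma X$ in $\cc$ in which $\iota_X$ is a left $\cp$-approximation (so $P^X\in\cp$), and the distinguished triangles of $\cu/\cp$ are the images of triangles of $\cc$ whose connecting morphism has been rewritten through such a $\theta_X$. The key point is therefore to factor the connecting morphism $w\colon M\to\Sigma T_1^M$ through $\theta_{T_1^M}\colon T_1^M\langle1\rangle\to\Sigma T_1^M$. Such a factorization exists if and only if the composite $\Sigma\iota_{T_1^M}\circ w$ vanishes, and this composite lies in $\Hom_{\cc}(M,\Sigma P^{T_1^M})=\Ext^1_\cc(M,P^{T_1^M})$. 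Since $M\in\cu$ and $P^{T_1^M}\in\cp$, the $2$-Calabi-Yau property gives $\Ext^1_\cc(M,P^{T_1^M})\cong D\,\Ext^1_\cc(P^{T_1^M},M)=0$; hence a factorization $w=\theta_{T_1^M}\circ w'$ with $w'\colon M\to T_1^M\langle1\rangle$ does exist.

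With this factorization in hand, applying $\pi$ and using the octahedral axiom exactly as in the construction of the triangulated structure on $\cu/\cp$ produces a distinguished triangle
\[
\pi(T_1^M)\to\pi(T_0^M)\to\ol{M}\xrightarrow{\pi(w')}\pi(T_1^M)\langle1\rangle
\]
in $\cu/\cp$ whose outer terms $\pi(T_0^M),\pi(T_1^M)$ lie in $\add\ol{T}$. Because the index in a $2$-Calabi-Yau category is independent of the chosen $\add\ol{T}$-triangle (minimality is not needed), we may read it off directly:
\[
\ind_{\ol{T}}(\ol{M})=[\pi(T_0^M)]-[\pi(T_1^M)]=\sum_{i=1}^r\bigl([T_0^M:T_i]-[T_1^M:T_i]\bigr)[\ol{T}_i],
\]
which equals $\sum_{i=1}^r[\ind_T(M):T_i]\,[\ol{T}_i]$. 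Combining this with part~a), which identifies $[\ind_T(M):T_i]$ with $g_T(X_M')(i)$ for $1\leq i\leq r$, yields the asserted formula.

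I expect the only genuine obstacle to be the matching of the two shift functors, namely producing the factorization of $w$ through $\theta_{T_1^M}$; once the vanishing $\Ext^1_\cc(M,\cp)=0$ has been invoked, the remaining check that the projected sequence is a distinguished triangle is immediate from the description of triangles in the subfactor category $\cu/\cp$, and everything else is bookkeeping on multiplicities $[\,\cdot:T_i]$ in the first $r$ coordinates.
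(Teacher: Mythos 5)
Your argument for part b) is correct and in fact supplies the detail the paper omits: the paper's proof consists of the sentence ``We leave the easy proof of part b) to the reader'', and your route --- lifting the connecting morphism $M\to\Sigma T_1^M$ through $\theta_{T_1^M}$ using $\Ext^1_\cc(M,\cp)\cong D\,\Ext^1_\cc(\cp,M)=0$ (which holds precisely because $M\in\cu$ and $\cp$ is rigid), then reading off $\ind_{\ol{T}}(\ol{M})$ from the projected triangle and using well-definedness of the index --- is exactly the intended argument.

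However, there is a genuine gap: part a) is never proved. You invoke it in your final step (``Combining this with part a), which identifies $[\ind_T(M):T_i]$ with $g_T(X_M')(i)$''), but part a) is precisely the substantive content of the paper's proof, and it is not a formal consequence of Proposition~\ref{prop3}, which lives in the Frobenius setting with a different formula for $X'_M$. In the triangulated setting one must argue: since $M\in\cu$, the $\End_\cc(T)$-module $\Hom_\cc(T,\Sigma M)$ vanishes at the vertices $r<i\leq n$, so every class $e$ with $Gr_e(\Hom_\cc(T,\Sigma M))\neq\emptyset$ is supported on $S_1,\ldots,S_r$; one then computes $\langle S_i,e\rangle_a=\sum_{j=1}^r b_{ij}e_j$ and rewrites
\[
X_M'=\prod_{i=1}^{n}x_i^{[\operatorname{ind}_T(M):T_i]}\Bigl(1+\sum_{e\neq 0}\chi(Gr_e(\Hom_\cc(T,\Sigma M)))\prod_{j=1}^{r}\hat{y}_j^{\,e_j}\Bigr),
\]
where the second factor is a polynomial in $\hat{y}_1,\ldots,\hat{y}_r$ with constant term $1$, hence primitive; since $\rank B^0=r$ the $\mathbf{g}$-vector is then well defined and equals $([\ind_T(M):T_i])_{1\leq i\leq r}$. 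Without this computation, a) is unestablished, and your b) is incomplete as stated because its conclusion is phrased in terms of $g_T(X_M')$.
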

In analogy with the definition in section~\ref{F2R}, we define a
cluster tilting object $T'$ of $\cu$ to be {\em reachable} from $T$
if it is obtained from $T$ by a sequence of mutations at
indecomposable rigid objects of $\cu$ not in $\cp$. We define an
indecomposable rigid object of $\cu$ to be {\em reachable} from $T$
if it is a direct factor of a cluster tilting object reachable from
$T$.

\begin{theorem}\label{thm3}
Let $B$ be an antisymmetric integer $n\times n$-matrix and $1\leq
r\leq n$ an integer such that the submatrix $B^0$ of $B$ formed by
the first $r$ columns has rank $r$. Let $\ca=\ca(B^0)$ be the
associated cluster algebra with coefficients. Assume that the matrix
$B$ admits a triangulated $2$-CY realization given by a triangulated
category $\cc$ with a cluster tilting object $T$ which is the sum of
$n$ indecomposable direct factors $T_1$, \ldots, $T_n$. Denote  by
$\cp$ the full subcategory of $\cc$ whose objects are the finite
direct sums of copies of $T_{r+1},\ldots, T_n$ and define the
subcategory $\cu$ of $\cc$ by
\[
 \cu=\,^\perp\!(\Sigma\cp)=\{X\in \cc| \Ext^1_{\cc}(T_i,X)=0\mbox{ for } r<i\leq n\}.
\]
For $M\in\cc$, define (\confer section~\ref{ss:ClusterCharacters})
\[
X_M'=X^T_{\Sigma M} =\prod_{i=1}^{n}x_i^{[\operatorname{ind}_T M:
T_i]} \sum_e\chi(Gr_e(\Hom_{\cc}(T, \Sigma
M)))\prod_{i=1}^{n}x_i^{\langle
  S_i, e\rangle_a}.
\]
Then the following hold.
\begin{itemize}
\item[a)] The map $M \mapsto X'_M$ induces a bijection from the set
of isomorphism classes of indecomposable rigid objects of $\cu$ not
belonging to $\cp$ and reachable from $T$ onto the set of cluster
variables of $\ca(B^0)$. Under this bijection, the cluster-tilting
objects of $\cu$ reachable from $T$ correspond to the clusters of
$\ca(B^0)$.

\item[b)] The map $M \mapsto [\ind_T(M):T_i]_{1\leq i \leq r}$ is
a bijection from the set of indecomposable rigid objects of $\cu$
not belonging to $\cp$ and reachable from $T$ onto the set of
$\mathbf{g}$-vectors of cluster variables of $\ca(B^0)$.

\item [c)]Conjecture 7.2 of \cite{FominZelevinsky07} holds for $\ca$,
  {\em i.e.} the cluster monomials are linearly independent over $\Z$. Moreover,
  the cluster monomials form a basis of the $\Z[x_{r+1}, \ldots, x_n]$-submodule
  of $\ca(B^0)$ which they generate.

\item [d)]Conjecture 7.10 of \cite{FominZelevinsky07} holds for $\ca$, {\em i.e.}  \\
  1) Different cluster monomials have different $\mathbf{g}$-vectors with respect to a given initial seed.\\
  2) The $\mathbf{g}$-vectors of the cluster variables in any given
  cluster form a $\mathbb{Z}$-basis of the lattice $\mathbb{Z}^r$.
\item [e)] Conjecture 7.12 of \cite{FominZelevinsky07} holds for
  $\ca$, {\em i.e.} if $(g_1,\ldots, g_r)$ and $(g_1',\ldots,g_r')$
  are the $\mathbf{g}$-vectors of one and the same cluster variable
  with respect to two clusters $t$ and $t'$ related by the mutation at
  $l$, then we have
\begin{eqnarray*}
  g_j'=\left\{\begin{array}{ll} -g_l\ \ \ \ \ \ \ \ \ \ \ \ \ \ \, \quad   \ \ \ \ \ \quad \quad \quad \mbox{if}\ \, j=l\\
      g_j+[b_{jl}]_+g_l-b_{jl}\min(g_l,0)\ \ \ \mbox{if}\ \, j\neq l
             \end{array}
\right.
\end{eqnarray*}
where the $b_{ij}$ are the entries of the $r\times r$-matrix $B$
associated with $t$ and we write $[x]_+$ for $\mbox{max}(x,0)$ for
any integer $x$.
\end{itemize}
\end{theorem}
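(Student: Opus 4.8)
The plan is to follow the same template as the Frobenius case treated in Theorems~\ref{thm2} and~\ref{prop5}, with the subtriangulated category $\cu/\cp$ now playing the role previously played by the stable category $\ul{\ce}$. By Theorem~I.2.1 of \cite{BuanIyamaReitenScott07}, the quotient $\cu/\cp$ is a $\Hom$-finite $2$-Calabi-Yau triangulated category, the projection $\cu\to\cu/\cp$ restricts to a bijection between indecomposables of $\cu$ not lying in $\cp$ and indecomposables of $\cu/\cp$, and it induces a bijection between the cluster-tilting subcategories of $\cc$ containing $\cp$ and those of $\cu/\cp$. In particular, the image $\ul{T}$ of $T$ is a cluster-tilting object of $\cu/\cp$ with $r$ indecomposable summands, and the mutations of $T$ in the first $r$ directions (which fix $\cp$) correspond to the mutations of $\ul{T}$ in $\cu/\cp$, hence to the mutations of the matrix $B^0$. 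The main computational input is Proposition~\ref{prop6}, which identifies the first $r$ exponents of the leading monomial of $X'_M$ with the restricted index $([\ind_T(M):T_i])_{1\le i\le r}$ and this in turn with the index of the image of $M$ in $\cu/\cp$, together with Theorems~2.3, 2.6 and~3.1 of \cite{DehyKeller08} applied to the $2$-Calabi-Yau category $\cu/\cp$.

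For part~a), surjectivity onto the cluster variables of $\ca(B^0)$ is the content of the discussion preceding Proposition~\ref{prop6}: via Proposition~\ref{prop:ClusterCharacterProperties} applied to $\cu/\cp$, the exchange triangles in $\cu/\cp$ lift to mutations in $\cu$ fixing $\cp$, and matching the initial values $X'_{T_i}=x_i$ shows inductively that each indecomposable rigid $M$ of $\cu$ reachable from $T$ and not in $\cp$ is sent to a cluster variable, every cluster variable being hit. For injectivity I would argue as in Corollary~\ref{cor1}: since $\rank B^0=r$, one may choose rationals $k_1,\dots,k_n$ with $(k_1,\dots,k_n)B^0=(1,\dots,1)$, so that each $\hat{y}_j$ has degree $1$ and the term of strictly minimal total degree in $X'_M$ is the full index monomial $\prod_{i=1}^n x_i^{[\ind_T(M):T_i]}$. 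As distinct rigid objects of $\cc$ have distinct indices by Theorem~2.3 of \cite{DehyKeller08}, the map $M\mapsto X'_M$ is injective; the correspondence between reachable cluster-tilting objects and clusters then follows from the cluster structure on $\cu/\cp$. Part~b) is immediate from~a) and Proposition~\ref{prop6}~a), the injectivity of $M\mapsto([\ind_T(M):T_i])_{1\le i\le r}$ being Theorem~2.3 of \cite{DehyKeller08} in $\cu/\cp$.

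For part~c), the leading-monomial argument of the previous paragraph shows that finitely many pairwise non-isomorphic rigid objects of $\cu$ reachable from $T$ have $X'$-images with pairwise distinct minimal-degree monomials, hence are linearly independent over $\Z$; restricting to rigid objects without projective summands gives the linear independence of the cluster monomials. For the refinement that they form a $\Z[x_{r+1},\dots,x_n]$-basis of the submodule they generate, I would use multiplicativity of the cluster character, $X'_{M\oplus P}=X'_M\prod_{i>r}x_i^{a_i}$ for $P=\bigoplus_{i>r}T_i^{a_i}$ (together with $X'_{T_i}=x_i$): a $\Z[x_{r+1},\dots,x_n]$-linear relation among cluster monomials becomes a $\Z$-linear relation among the $X'$-images of the pairwise distinct reachable rigid objects $M\oplus P$ carrying projective summands, which is again ruled out by distinctness of indices. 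Part~d)~1) follows since the $\mathbf{g}$-vector of a cluster monomial is the restricted index of the associated rigid object and distinct rigid objects have distinct indices; for d)~2), the cluster variables of a cluster are the images of the non-projective indecomposable summands $T'_i$ of a reachable cluster-tilting object $T'$, whose restricted indices are, by Theorem~2.6 of \cite{DehyKeller08} applied to $\cu/\cp$, a basis of $K_0(\add \ul{T})\cong\Z^r$.

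Finally, part~e) is proved exactly as Theorem~\ref{prop5}~c): the two clusters correspond to $\ul{T}$ and its mutation at the indecomposable $T_l$ in $\cu/\cp$, the $\mathbf{g}$-vectors of the fixed cluster variable are the components of the two indices of $X$ in $\cu/\cp$ by Proposition~\ref{prop6}~a), and Theorem~3.1 of \cite{DehyKeller08} expresses one in terms of the other through the maps $\phi_\pm$ determined by the exchange triangles, which unwinds to the stated piecewise-linear rule. I expect the main obstacle to be bookkeeping rather than a new idea: one must consistently translate between the index in $\cc$ (an $n$-tuple, controlling the whole Laurent monomial and used for the separation arguments) and the index in $\cu/\cp$ (an $r$-tuple, equal to the $\mathbf{g}$-vector and governing parts~b), d) and~e)), and in part~c) one must handle projective summands correctly so that the coefficient variables $x_{r+1},\dots,x_n$ are incorporated into the module structure.
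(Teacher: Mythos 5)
Your proposal is correct and follows essentially the same route as the paper: surjectivity via Proposition~\ref{prop:ClusterCharacterProperties} applied through $\cu/\cp$, injectivity and parts b), d) via the index/$\mathbf{g}$-vector dictionary of Proposition~\ref{prop6} together with Theorems~2.3 and 2.6 of \cite{DehyKeller08}, the leading-monomial argument of Corollary~\ref{cor1} for part c) (including the reduction of the $\Z[x_{r+1},\dots,x_n]$-basis claim to $\Z$-independence of the $X'_{M\oplus P}$), and Theorem~3.1 of \cite{DehyKeller08} for part e). The only cosmetic difference is that for injectivity in a) you read off the full $n$-component index in $\cc$ from the minimal-degree monomial, whereas the paper passes through the $r$-component index in $\cu/\cp$ via Proposition~\ref{prop6}~b); both are valid and rest on the same input.
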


\begin{proof}
It follows from Proposition~\ref{prop:ClusterCharacterProperties}
that the map $M\mapsto X'_M$ is well-defined and surjective onto the
set of cluster variables of $\ca(B^0)$. It is injective by
Proposition \ref{prop6} b) because rigid objects of
$\cu/\cp$ are determined by their indices and the map taking a rigid object $M$
of $\cu$ without non zero direct factors in $\cp$ to its image
in $\cu/\cp$ is injective (up to isomorphism). This also implies part
b). The same proof as for Corollary \ref{cor1} b) yields the linear
independence of the cluster monomials in c). Let us prove that
the cluster monomials form a basis of the $\Z[x_{r+1}, \ldots, x_n]$-submodule
of $\ca(B^0)$ which they generate. Indeed, over $\Z$, this
submodule is spanned by the images $X'_M$ of all rigid objects
of $\cu$ obtained as direct sums of objects of $\cp$ and indecomposable rigid objects
reachable from $T$ not belonging to $\cp$. Such objects $M$ are in
particular rigid in $\ct$ and they can be distinguished (up to
isomorphism) by their indices. Now again, the same proof
as for Corollary~\ref{cor1} b) shows that these $X'_M$ are linearly
independent over $\Z$. Clearly this implies that the cluster monomials
form a basis of the $\Z[x_{r+1}, \ldots, x_n]$-submodule
of $\ca(B^0)$ which they generate. As in the proof of Theorem~\ref{prop5} b),
the assertions in part d) follow from the interpretation of the $\mathbf{g}$-vector
given in \ref{prop6} b) and the facts that
\begin{itemize}
\item[1)] rigid objects of $\cu/\cp$ are determined by their indices
(Theorem~2.3 of \cite{DehyKeller08}) and
\item[2)] the indices of the indecomposable direct factors of a
cluster-tilting subcategory $\ct$ of $\cu/\cp$ form a basis of
$K_0(\ct)$ (Theorem~2.6 of \cite{DehyKeller08}).
\end{itemize}
Part e) is proved in exactly the same way as the corresponding
statement for cluster algebras with a $2$-CY Frobenius realization
in  Theorem \ref{thm2} c).
\end{proof}

\begin{example}
Let $A_4$ be the quiver $3\to 1\to 2\leftarrow 4$, $\cc_{Q}$ the
corresponding cluster category. The following is the AR quiver of
$\cc_Q$, where $P_i$, $1\leq i\leq 4$, are the indecomposable
projective $kQ$-modules.
\[\xymatrix@=0.4cm{
  & P_4\ar[dr] &  & 123\ar[dr]& & \Sigma P_3\ar[dr]\\
  \ldots & & *+<0.1cm>[F]{\ \, P_2}\ar[ur]\ar[dr]& & 12\ar[ur]\ar[dr]& &  *+<0.1cm>[F]{\Sigma P_1}\ar[dr] &&\ldots\\
  \ldots & *+<0.1cm>[F]{\ \, P_1}\ar[ur]\ar[dr] & & 124\ar[ur]\ar[dr] & & 2\ar[ur]\ar[dr] & &*+<0.1cm>[F]{\Sigma P_2}&\ldots\\
  P_3\ar[ur] & & S_1\ar[ur]& & *+<0.1cm>[F]{\ \, M}\ar[ur]& & \Sigma
  P_4\ar[ur]}
\]
Let $T=P_1\oplus P_2\oplus P_3\oplus P_4$ be the canonical cluster
tilting object in $\cc_Q$, $\cp=\add (P_3\oplus P_4)$. It is easy to
see that the indecomposable objects in $\cu/\cp\cong \cc_{A_2}$ are
exactly $P_1, P_2, M, \Sigma P_1, \Sigma P_2$. In this case, the
matrix $B(T)^0$ is
\begin{eqnarray*}
 \left(\begin{array}{ll} 0&1\\-1&0\\1&0\\0&1
        \end{array}
\right).
\end{eqnarray*}
We have $\mbox{rank}\,B(T)^0=2$. Moreover,  the cluster algebra $\ca(B(T)^0)$ has principal coefficients.
\end{example}

\subsection{Cluster algebras with principal coefficients.}
In this subsection, we suppose  that $2r=n$ and  that the
complementary part of $B^0$ is the $r\times r$ identity matrix. Thus
the cluster algebra $\ca(B^0)$ has principal coefficients. Recall
that for the matrix $B$, we have a triangulated $2$-CY realization
$\cc\supset \add T$ and we have fixed $\cp=\add (T_{r+1}\oplus
\ldots\oplus T_{2r})$. Let $\cq=\add(T_1\oplus\ldots\oplus T_r)$.
Let $\cc_1=\cu/\cp$ and $\cc_2=\,^\perp(\Sigma\cq)/\cq$ be the quotient
categories, $\ct_1=\add(\pi_1(T_1\oplus\ldots\oplus T_r))$ and
$\ct_2=\add(\pi_2(T_{r+1}\oplus \ldots\oplus T_{2r}))$ the
corresponding cluster tilting subcategories, where $\pi_1$ and
$\pi_2$ are the respective projection functors. Then $\cc$ is a
gluing of $\cc_1\supset \ct_1$ with $\cc_2\supset \ct_2$ with
respect to the matrix $B$.

As in section~\ref{F2R}, for a cluster variable $x_{l,t}$ of the
cluster algebra $\ca(B^0)$ which corresponds to an indecomposable
rigid object $M\in \cu$ and  not in $\cp$, we denote the rational
function $\mathcal{F}_{l,t}$ defined in section 3 of
\cite{FominZelevinsky07} by $\mathcal{F}_M$. Since $x_{l,t}=X_M'$,
we have
\begin{eqnarray*}
 \mathcal{F}_M&=&X_M'(x_1=\ldots=x_r=1)\\
&=& \prod_{i=r+1}^{2r} x_i^{[\operatorname{ind}_T(M):T_i]}(1+\sum_{e\neq 0} \chi(Gr_e(\Hom_{\cc}(T, \Sigma M)))\prod_{j=r+1}^{2r}x_j^{e_{j-r}}).
\end{eqnarray*}
The following result is now a consequence of Proposition 3.6 and 5.2
in \cite{FominZelevinsky07}. We give a proof based on representation
theory. Note that conjecture 5.4 of \cite{FominZelevinsky07} will be
proved in full generality in \cite{DerksenWeymanZelevinsky09}.

\begin{theorem}\label{thm4}
Conjecture 5.4 of \cite{FominZelevinsky07} holds for $\ca(B^0)$, {\em i.e.}
 the polynomial $\mathcal{F}_M$ has constant term 1. Thus we have
\[
 \mathcal{F}_M=1+\sum_{e\neq 0} \chi(Gr_e(\Hom_{\cc}(T, \Sigma M)))\prod_{j=r+1}^{2r}x_j^{e_{j-r}}.
\]

\end{theorem}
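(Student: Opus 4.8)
The plan is to reduce the assertion to a single statement about the index and then split the argument into a categorical part and a cluster-algebra part. Since the second factor
\[
1+\sum_{e\neq 0}\chi(Gr_e(\Hom_\cc(T,\Sigma M)))\prod_{j=r+1}^{2r}x_j^{e_{j-r}}
\]
is a genuine polynomial in the coefficient variables $x_{r+1},\ldots,x_{2r}$ with non-negative exponents and with constant term $1$ (the summand $e=0$), the polynomial $\mathcal{F}_M$ lies in $\Z[x_{r+1},\ldots,x_{2r}]$ and its constant term is $1$ precisely when the monomial prefactor $\prod_{i=r+1}^{2r}x_i^{[\ind_T(M):T_i]}$ is trivial. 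So the whole problem becomes showing $[\ind_T(M):T_i]=0$ for $r<i\leq 2r$.

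The representation-theoretic input I would isolate is the inequality $[\ind_T(M):T_i]\geq 0$ for $r<i\leq 2r$. Recall from \cite{KellerReiten07} the triangle $T_M^1\to T_M^0\to M\to\Sigma T_M^1$ with $T_M^0,T_M^1\in\add T$, giving $\ind_T(M)=[T_M^0]-[T_M^1]$. First I would note that the connecting morphism $M\to\Sigma T_M^1$ is a minimal left $\add(\Sigma T)$-approximation: applying $\cc(-,\Sigma T_i)$ to the triangle and using $\cc(T_M^0,\Sigma T_i)=\Ext^1_\cc(T_M^0,T_i)=0$ (rigidity of $T$) shows that every morphism $M\to\Sigma T_i$ factors through it. Now for $r<i\leq 2r$ the $2$-Calabi-Yau property together with $M\in\cu={}^\perp(\Sigma\cp)$ yields
\[
\cc(M,\Sigma T_i)=\Ext^1_\cc(M,T_i)\cong D\,\Ext^1_\cc(T_i,M)=0,
\]
so no copy of $\Sigma T_i$ can appear in $\Sigma T_M^1$; by minimality $[T_M^1:T_i]=0$. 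Hence $[\ind_T(M):T_i]=[T_M^0:T_i]\geq 0$ for all $i>r$.

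To conclude I would invoke a cluster-algebra input. As $M$ is an indecomposable rigid object of $\cu$ reachable from $T$ and not lying in $\cp$, its image $X_M'$ is a cluster variable of $\ca(B^0)$ (\confer Proposition~\ref{prop:ClusterCharacterProperties} and Theorem~\ref{thm3}), and $\mathcal{F}_M$ is exactly its $F$-polynomial. By Proposition~5.2 of \cite{FominZelevinsky07}, $\mathcal{F}_M$ is not divisible by any $x_i$ with $r<i\leq 2r$. Combined with the factorisation above, in which all exponents $[\ind_T(M):T_i]$ are non-negative and the remaining factor has constant term $1$, this forces $[\ind_T(M):T_i]=0$ for every $r<i\leq 2r$, which is what we wanted, and gives the displayed expression for $\mathcal{F}_M$.

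The main obstacle is conceptual rather than computational. The equality $\mathcal{F}_M=1+\cdots$ cannot be read off from the cluster character alone, because for a general object the coefficient-coordinates $[\ind_T(M):T_i]$, $i>r$, are only controlled by $[T_M^0:T_i]$ and need not vanish; what makes the argument succeed is the interplay of two facts of different nature, namely the categorical sign constraint $[\ind_T(M):T_i]\geq 0$ coming from $M\in\cu$, and the (unconditional) non-divisibility of Proposition~5.2 of \cite{FominZelevinsky07}. This is the same mechanism as in the Frobenius case, where $[\ind_T(M):T_i]$ is replaced by $\langle FM,S_i\rangle_\tau$.
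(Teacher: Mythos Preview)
Your proof is correct, but it follows a different route from the paper's. Both arguments begin the same way: using $M\in\cu$ and minimality of the triangle $T_M^1\to T_M^0\to M\to\Sigma T_M^1$ to get $[T_M^1:T_i]=0$ for $i>r$, hence $[\ind_T(M):T_i]=[T_M^0:T_i]\geq 0$. At this point you invoke Proposition~5.2 of \cite{FominZelevinsky07} (non-divisibility of the $F$-polynomial by any coefficient variable) to force these exponents to vanish. The paper instead finishes purely representation-theoretically: in the principal gluing the vertices $r+1,\ldots,2r$ are sources, so each $FT_i$ with $i>r$ is a \emph{simple} projective $\End_\cc(T)$-module; if such a summand occurred in $T_M^0$, it would split off as a direct summand of $FM$ via the minimal projective resolution $0\to FT_M^1\to FT_M^0\to FM\to 0$, contradicting indecomposability of $M$ (after disposing separately of the case $M\cong\Sigma T_j$, $j\leq r$).

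Your approach is exactly the one the paper alludes to in the sentence preceding the theorem (``a consequence of Proposition~3.6 and~5.2 in \cite{FominZelevinsky07}'') and, as you observe, is the same mechanism used in the Frobenius non-realization theorem in section~\ref{F2R}. It is shorter and avoids the case split, but imports a result from \cite{FominZelevinsky07}; the paper's argument is self-contained and exploits a structural feature specific to principal coefficients (simple projectives at the frozen vertices), which is more informative categorically.
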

\begin{proof}
We need to show that for each $i>r$, $[\mbox{ind}_T(M):T_i]$ is zero.
Since $X_M'$ is a cluster variable and  $M$ is indecomposable,
we have the following two cases:\\
{\em Case 1:} $M\cong \Sigma T_j$ for some $j\leq r$.
We have $\mbox{ind}_T(M)=-[T_j]$, which implies that
$[\mbox{ind}_T(M):T_i]=0$.\\
{\em Case 2:} $M$ is not isomorphic to $\Sigma T_j$ for any $j\leq
r$. Recall that by assumption, $M$ is not isomorphic to $T_j$ for
any $j>r$. We have the following minimal triangle
\[
 T_M^1\to T_M^0\to M\to \Sigma T_M^1
\]
with $T_M^0$, $T_M^1$ in $\add T$ and
$\mbox{ind}_T(M)=[T_M^0]-[T_M^1]$. Since $M$ belongs to $\cu$, for
each $i>r$ we have $\Hom_{\cc}(M, \Sigma T_i)=0$. If we had $[T_M^1:
T_i]\neq 0$ for some $i>r$, then the above minimal triangle would
have a non zero direct factor
\[
 T_i\to T_i\to 0\to \Sigma T_i.
\]
Suppose that we have  $[T_M^0: T_i]\neq 0$ for some $i>r$.  Applying
the functor $F=\Hom_{\cc}(T,?)$ to the triangle, we get a minimal
projective resolution of $FM$ as an $\End_{\cc}(T)$-module. Note
that for $i>r$, the projective module $FT_i$ is also a  simple
module, which implies that $FM$ is decomposable. Contradiction.
\end{proof}

Suppose that the indecomposable rigid object $M$ of $\cc$
is reachable from $T$ and consider the polynomial
$\mathcal{F}_M$ of Theorem~\ref{thm4}. We define the {\em
$f$-vector} $f_T(M)=(f_1, \ldots, f_r)$ of $M$ with respect to $T$
by
\[
 \mathcal{F}_M|_{Trop(u_1,\ldots,u_r)}(u_1^{-1},\ldots,u_r^{-1})=u_1^{-f_1}\ldots u_r^{-f_r} \ko
\]
where $\mbox{Trop}(u_1, \ldots, u_r)$ is the tropical
semifield defined in section~\ref{ss:ClusterAlgebras}.

\begin{proposition}\label{prop7}
Suppose that $M$ is  not isomorphic to $ T_i$ for $1\leq i\leq 2r$,
and let  $\dimv\, \Hom_{\cc}(T, \Sigma M)=(d_1, \ldots, d_r)$. Then
we have
\[
d_i=f_i, \,\, 1\leq i\leq r.
\]
\end{proposition}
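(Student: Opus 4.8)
The plan is to recognize the $f$-vector as the coordinatewise top degree of $\mathcal{F}_M$ and then to read this off from the explicit shape of $\mathcal{F}_M$ provided by Theorem~\ref{thm4}. Write $N=\Hom_{\cc}(T,\Sigma M)$; since $M$ lies in $\cu$, the $\ul{C}$-module $N$ is supported on the vertices $1,\ldots,r$, so every class $e$ of a submodule has the form $e=(e_1,\ldots,e_r)$. By Theorem~\ref{thm4},
\[
\mathcal{F}_M=1+\sum_{e\neq 0}\chi(Gr_e(N))\prod_{i=1}^r x_{r+i}^{e_i}.
\]
First I would unwind the tropical evaluation defining the $f$-vector. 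Substituting $x_{r+i}=u_i^{-1}$ turns each monomial $\prod_i x_{r+i}^{e_i}$ into $\prod_i u_i^{-e_i}$, and since the operation $\oplus$ of $\mathrm{Trop}(u_1,\ldots,u_r)$ takes the minimum of exponents, one obtains
\[
\mathcal{F}_M|_{\mathrm{Trop}(u_1,\ldots,u_r)}(u_1^{-1},\ldots,u_r^{-1})=\prod_{i=1}^r u_i^{-\max_e e_i},
\]
the maximum ranging over those $e$ with $\chi(Gr_e(N))\neq 0$. Comparing with the definition of $f_T(M)$ gives $f_i=\max\{e_i:\chi(Gr_e(N))\neq 0\}$, the maximal degree of the coefficient variable $x_{r+i}$ in $\mathcal{F}_M$.

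It then remains to identify this top degree with $d_i=[N:S_i]$. For the inequality $f_i\leq d_i$ I would argue that whenever $\chi(Gr_e(N))\neq 0$ the variety $Gr_e(N)$ is non-empty, hence contains an actual submodule $U\subseteq N$ of class $e$, so that $e_i=[U:S_i]\leq[N:S_i]=d_i$. For the reverse inequality I would exhibit the class $e=\dimv N=(d_1,\ldots,d_r)$ itself: the only submodule of $N$ with full dimension vector is $N$, so $Gr_{\dimv N}(N)$ is a single point, whence $\chi(Gr_{\dimv N}(N))=1\neq 0$; this term contributes and realizes $e_i=d_i$ for every $i$. Combining the two inequalities yields $f_i=d_i$ for $1\leq i\leq r$.

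The one point that needs genuine care is the justification that the tropical evaluation really returns the coordinatewise top degree, given that the coefficients $\chi(Gr_e(N))$ are a priori arbitrary integers rather than manifestly positive ones. This is exactly what the lower-bound computation settles: the monomial $\prod_i x_{r+i}^{d_i}$ of top multidegree occurs in $\mathcal{F}_M$ with coefficient precisely $1$ and is the unique monomial attaining degree $d_i$ simultaneously in all the variables, so there can be no cancellation lowering the leading degree in any single $x_{r+i}$; thus the expected top degree survives in each coordinate. The hypothesis $M\not\cong T_i$ for $1\leq i\leq 2r$ is used only to ensure that $M$ represents a genuine non-initial cluster variable, so that $\mathcal{F}_M$ and hence $f_T(M)$ are defined in the first place; the equality $f_i=d_i$ then follows from the argument above once this is granted.
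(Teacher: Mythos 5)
Your proof is correct and follows essentially the same route as the paper: both start from the formula for $\mathcal{F}_M$ in Theorem~\ref{thm4} and evaluate it in the tropical semifield, where the paper simply asserts the result equals $u_1^{-d_1}\cdots u_r^{-d_r}$ while you supply the justification (every submodule class satisfies $e_i\le d_i$, and the full submodule contributes the term $\prod_i x_{r+i}^{d_i}$ with coefficient $\chi(\mathrm{pt})=1$). Your explicit flagging of the subtlety about possibly non-positive Euler characteristics in the tropical evaluation is a point the paper glosses over as well, so this is a faithful and slightly more careful rendering of the same argument.
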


\begin{proof}
By Theorem~\ref{thm4}, we have
\[
 \mathcal{F}_M
= 1+\sum_{e\neq 0} \chi(Gr_e(\Hom_{\cc}(T, \Sigma M)))\prod_{j=r+1}^{2r}x_j^{e_{j-r}}.
\]
Therefore, we obtain
\begin{eqnarray*}
\mathcal{ F}_M|_{Trop(u_1,\ldots, u_r)}(u_1^{-1},\ldots,
u_r^{-1})&=&1 \oplus \,
\bigoplus_{e\neq 0} \chi(Gr_e(\Hom_{\cc}(T, \Sigma M)))\prod_{j=1}^{r}u_j^{-e_j}\\
&=& u_1^{-d_1}\ldots u_n^{-d_r}.
\end{eqnarray*}
\end{proof}

Under the assumptions above, we have proved that the dimension
vector of $\Hom_{\cc}(T, \Sigma M)$ equals the $f$-vector $f_T(M)$.
Conjecture 7.17 of \cite{FominZelevinsky07} states that the
$f$-vectors coincide with the denominator vectors in general. But by
recent work of A. Buan, R. Marsh  and I. Reiten
\cite{BuanMarshReiten08a},  the dimension vectors do not always
coincide with the denominator vectors.  In fact, as shown in
\cite{BuanMarshReiten08a}, for a quiver $Q$ whose underlying graph
is an affine Dynkin diagram, the vector $\dim \Hom_{\cc_Q}(T,M)$ is
different from the denominator vector of $X^T_M$  if $M=R$ and $R$
is a direct factor of $T$, where $R$ is a rigid regular
indecomposable of maximal quasi-length. This leads to the following
minimal counterexample to Conjecture 7.17 in
\cite{FominZelevinsky07}. Let us point out that the corresponding
computations already appear in \cite{Cerulli08}. In subsection~{5.5}
below, we will show that in many cases, the $f$-vector is greater or
equal to the denominator vector.

\subsection{A counterexample.}
\begin{example}
Let $Q$ be the following quiver
\[
 \xymatrix@=0.4cm{& 3\ar[dl]\\ 1\ar@2[rr]& & 2\ar[ul]\ .}
\]
Let $\ca(Q)$ be the cluster algebra associated with the initial seed
given by $Q$ and  ${\bf x}=(x_1,x_2,x_3)$.  Consider the mutations
at $3,2,1$. Let ${\bf x}^{t_3}$ be the corresponding cluster. We have
\[
 x_1^{t_3}=\frac{x_1^2+2x_1x_2+x_2^2+x_3}{x_1x_2x_3}
\]
and the corresponding $F$-polynomial is
\[
 F_{x_1^{t_3}}=1+(1+y_1+y_1y_2)y_3+y_1y_2y_3^2.
\]
Then the $f$-vector of $x_1^{t_3}$ does not coincide with the denominator vector.
\end{example}

Let us interpret this counterexample in terms of  representation theory.
Let $A_{2,1}$ be the quiver
\[\xymatrix@=0.4cm{
& 3\ar[dr]\\
1\ar[ur]\ar[rr] &  &2\ .}
\]
Consider the cluster category $\cc_{A_{2,1}}$ of $kA_{2,1}$. Let
$P_i$, $1\leq i\leq 3$,  be the indecomposable projective modules
and $S_i$ the corresponding simple modules. Then
\[
 T=P_1\oplus P_2\oplus \tau S_3,
\]
is a  cluster tilting object of $\cc_{A_{2,1}}$,
where $\tau$ is the Auslander-Reiten translation functor.  The quiver $Q_T$ of $T$ looks like
\[
\xymatrix@=0.4cm{& \tau S_3\ar[dl]\\
P_1\ar@2[rr]& &P_2\ar[ul]\ .}
\]
We will show that the cluster category $\cc_{A_{2,1}}\supset \add T$
admits a principal gluing. For this, consider the following quiver
$Q_1:$
\[
 \xymatrix@=0.5cm{& 6\ar[r] &3\ar[d]\\
4\ar[r]&1\ar[r]\ar[u]&2&5\ar[l]\ .}
\]
It admits a cluster category $\cc_{Q_1}$. Let $T_{Q_1}=kQ_1$  be the
canonical cluster tilting object in $\cc_{Q_1}$. Let
$T'=\mu_3(\mu_6(T_{Q_1}))$ be the cluster tilting object obtained by
mutations from $T_{Q_1}$. Denote the non isomorphic indecomposable
direct summands of $T'$ by $T'_i$, $1\leq i\leq 6$. Then the quiver
of $Q_{T'}$ is
\[
 \xymatrix@=0.4cm{& & T'_6\ar[d]\\
& & T'_3\ar[dl]\\
T'_4\ar[r]&T'_1\ar@ 2[rr]& &T'_2\ar[ul]&T'_5\ar[l]\ .}
\]
Let $\cp= \add (T'_4\oplus T'_5\oplus T'_6)$. Then $\cu/\cp$ is a
2-Calabi-Yau triangulated category and admits a cluster tilting
object with the quiver $Q_T$. By the main theorem of
\cite{KellerReiten06}, we know that there is a triangle equivalence
$\cu/\cp\simeq \cc_{A_{2,1}}$. Thus, we see that the  matrix $B(T')$
admits a triangulated $2$-CY realization $\cc_{Q_1}$ which is the
required principal gluing of $\cc_{A_{2,1}}\supset \add T$.  We may
assume that  the images of  $T'_1, T_2', T_3'$ coincide with $P_1,
P_2, \tau S_3$ in $\cc_{A_{2,1}}$ respectively.  Denote the shift
functor in $\cc_{Q_1}$ (resp. $\cc_{A_{2,1}}$) by $\Sigma$ (resp.
$[1]$).

Let $N$ be the preimage of $S_3$ in $\cc_{Q_1}$. Then one can easily
compute
\[
 \dimv \Hom_{\cc_{Q_1}}(T',\Sigma N)=\dimv \Hom_{\cc_{A_{2,1}}}(T, \tau S_3)=(1,1,2).
\]
Note that  the denominator vector of $X_N'$ equals the denominator
vector of $X_{\tau S_3}^T$. Now the result follows from the
Proposition above.

\subsection{An inequality.}
Let $\ct$ be a 2-Calabi-Yau triangulated category with
cluster-tilting object $T$. Recall that we have the generalized
Caldero-Chapoton map
\[
 X_M^T = \prod_{i=1}^n x_i^{-[\operatorname{coind}_{T}(M): T_i]}
\sum_e \chi(Gr_e(GM)) \, \prod_{i=1}^n x_i^{\langle S_i,e\rangle_a},
\]
where $G$ is  the functor $\Hom_{\cc}(T,?): \cc\to \mod
\End_{\cc}(T)$. The following proposition is proved in greater
generality in \cite{DerksenWeymanZelevinsky09}.
\begin{proposition}\label{prop8}
 For each $M$ in $\ct$, let $\dimv\, GM=(m_1,\ldots, m_n)$ and let $1\leq i\leq n$. We have
\[
-[\mbox{coind}_T(M):T_i] + \langle S_i, e\rangle_a \geq -m_i,
\]
for each submodule $N$ of $GM$ with $\dimv N=e$. Thus the exponent
of $x_i$ in the denominator of $X_M$ is less or equal to $m_i$.
\end{proposition}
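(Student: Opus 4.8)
The plan is to bound, for a fixed submodule $N\subseteq GM$ with $e=\dimv N$, the exponent of $x_i$ in the corresponding term of $X_M$, namely
\[
\eta_i(N)=-[\coind_T(M):T_i]+\langle S_i,e\rangle_a,
\]
and to prove $\eta_i(N)\geq -m_i$ term by term; since cancellation between terms can only shrink the denominator, this is enough. First I would reduce to the case in which $M$ has no direct summand in $\add(\Sigma T)$. The functor $G=\Hom_\cc(T,?)$ annihilates such summands, so $GM$ and hence $m_i$ are unchanged; and since $\coind_T(\Sigma T_k)=-[T_k]$, splitting off a summand $\Sigma T_k$ replaces $-[\coind_T(M):T_i]$ by the same quantity plus the (nonnegative) multiplicity of $\Sigma T_i$ in $M$, so it can only increase every $\eta_i(N)$.

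For such an $M$ I would compute the coindex in module-theoretic terms. Applying the $2$-Calabi-Yau duality $\Hom_\cc(-,\Sigma^2 T)\cong D\,\Hom_\cc(T,-)$ to the minimal coindex triangle $M\to\Sigma^2 T_M^0\to\Sigma^2 T_M^1\to\Sigma M$ produces a projective presentation of the left $C$-module $DGM$, which is now minimal precisely because $M$ has no $\add(\Sigma T)$-summand. Reading off the multiplicities of the indecomposable projectives gives
\[
[\coind_T(M):T_i]=\langle S_i,GM\rangle=\dim_k\Hom_C(S_i,GM)-\dim_k\Ext^1_C(S_i,GM).
\]
Combining this with the relative $3$-Calabi-Yau property of $\per C$, which (exactly as in the proof of Theorem~\ref{thm1}) gives $\langle S_i,e\rangle_a=-\langle N,S_i\rangle_3$ together with the dualities $\Ext^j_C(X,S_i)\cong D\Ext^{3-j}_C(S_i,X)$, and using that $\langle-,S_i\rangle_3$ is additive on short exact sequences (valid since $\ul C$-modules have projective dimension at most $3$), I would rewrite the exponent as $\eta_i(N)=-\langle GM,S_i\rangle+\langle W,S_i\rangle_3$ with $W=GM/N$.

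The final step is the estimate. Expanding $\langle W,S_i\rangle_3$ and using $3$-Calabi-Yau to replace $\Ext^3_C(W,S_i)$ by $\Hom_C(S_i,W)$, I would feed the long exact sequence of $\Ext^\bullet_C(-,S_i)$ attached to $0\to N\to GM\to W\to 0$ into the computation: the key point is that $\Ext^2_C(W,S_i)$ absorbs $\Ext^1_C(N,S_i)$, while the $\Hom$- and $\Ext^1$-contributions of $GM$ collapse, leaving the clean lower bound $\eta_i(N)\geq -\langle N,S_i\rangle-\dim_k\Hom_C(S_i,W)$. Since the multiplicity of $S_i$ in the top of $N$ is at most $(\dimv N)_i=e_i$ one gets $\langle N,S_i\rangle\leq e_i$, and since the multiplicity of $S_i$ in the socle of $W$ is at most $(\dimv W)_i=m_i-e_i$ one gets $\dim_k\Hom_C(S_i,W)\leq m_i-e_i$; adding these yields $\eta_i(N)\geq -e_i-(m_i-e_i)=-m_i$.

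I expect the main obstacle to be the passage through the coindex: identifying $[\coind_T(M):T_i]$ with $\langle S_i,GM\rangle$ hinges on the minimality of the presentation obtained from the coindex triangle, and this is exactly the point where the reduction to $M$ without shifted-projective summands is indispensable. The estimate itself is delicate because the crude bounds on $-\langle GM,S_i\rangle$ and on $\langle W,S_i\rangle_3$ in isolation are each too weak for an intermediate submodule $N$; only after the long exact sequence is used to cancel the $\Ext^1_C(N,S_i)$ term against $\Ext^2_C(W,S_i)$ do the surviving negative contributions reduce to a top-multiplicity and a socle-multiplicity, each controlled by a single component of $\dimv GM$.
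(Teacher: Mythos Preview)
Your overall shape is right and tracks the paper's argument closely: reduce to $M$ without $\add(\Sigma T)$-summands, identify $[\coind_T(M):T_i]=\langle S_i,GM\rangle_\tau$, feed a long exact sequence into the antisymmetrized form, and finish with the top/socle multiplicity bounds $[N,S_i]\leq e_i$ and $[S_i,GM/N]\leq m_i-e_i$. Your derivation of the coindex formula via the $2$-Calabi-Yau duality is a legitimate alternative to citing Palu's lemma.

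There is, however, a genuine gap. Proposition~\ref{prop8} is set in a $2$-Calabi-Yau \emph{triangulated} category, so here $C=\End_\cc(T)$ itself; there is no separate $\ul C$, and you cannot import the Frobenius-side package from Section~\ref{CC2FC}. Concretely: the relative $3$-Calabi-Yau property of $\per C$ and the exact dualities $\Ext^j_C(X,S_i)\cong D\Ext^{3-j}_C(S_i,X)$ that you invoke were proved in Theorem~\ref{thm1} for $\ul C$-modules in the Frobenius setting, where such modules lie in $\per C$ and have projective dimension $\leq 3$. In the triangulated setting $C$ is only Gorenstein of dimension $\leq 1$, the simples $S_i$ typically have infinite projective dimension, $\langle -,S_i\rangle_3$ is \emph{not} additive on short exact sequences, and the identity $\langle S_i,e\rangle_a=-\langle N,S_i\rangle_3$ need not hold. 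So your rewriting $\eta_i(N)=-\langle GM,S_i\rangle+\langle W,S_i\rangle_3$ is unjustified here.

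The fix is exactly what the paper does: work directly from the definition $\langle S_i,e\rangle_a=\langle S_i,N\rangle_\tau-\langle N,S_i\rangle_\tau$, apply $\Hom_C(S_i,-)$ (not $\Hom_C(-,S_i)$) to $0\to N\to GM\to GM/N\to 0$ to obtain
\[
\langle S_i,N\rangle_\tau+\langle S_i,GM/N\rangle_\tau-\langle S_i,GM\rangle_\tau+\dim\Ext^2_C(S_i,N)\geq 0,
\]
and then use only the \emph{inequality} $\dim\Ext^2_C(S_i,N)\leq\dim\Ext^1_C(N,S_i)$ coming from the stable $3$-Calabi-Yau property of $\mod C$ (Keller--Reiten). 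This absorbs the $\Ext^1_C(N,S_i)$ term in $\langle N,S_i\rangle_\tau$ and leaves precisely $\eta_i(N)\geq -[N,S_i]-[S_i,GM/N]\geq -m_i$, the same endpoint you reach.
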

\begin{proof}
This result holds for the case $M\cong \Sigma T',\,T'\in \add T$
obviously. We assume that $M$ is indecomposable and not isomorphic
to any $\Sigma T'$.  The  case where $M$ is decomposable is a
consequence of the multiplication theorem for $X_?$. Now by Lemma~7
of \cite{Palu08}, we have $$-[\mbox{coind}_T(M):T_i]=-\langle S_i,
GM\rangle_{\tau}.$$ Note that we have the short exact sequence of
$\End_{\cc}(T)$-modules
\[
 0\to N\to GM\to GM/N\to 0.
\]
By applying the functor $\Hom(S_i, ?)$, we get
\[
 \langle S_i, N\rangle_{\tau}+\langle S_i, GM/N\rangle_{\tau}-\langle S_i, GM\rangle_{\tau}+ \mbox{dim}\, \Ext^2(S_i, N)\geq 0.
\]
By the stable 3-Calabi-Yau property of $\mod \End_{\cc}(T)$ proved
in \cite{KellerReiten07}, we have $\mbox{dim}\Ext^2(S_i,N)\leq
\mbox{dim} \Ext^1(N,S_i)$. Therefore, we have
\begin{eqnarray*}
 -[\mbox{coind}_T(M):T_i] + \langle S_i, e\rangle_a &\geq& -\langle N,S_i\rangle_{\tau}-\langle S_i, GM/N\rangle_{\tau}-\mbox{dim}\Ext^2(S_i,N)\\
&\geq&-[N,S_i]- [S_i, GM/N]+ \,^1[S_i, GM/N]\\
&\geq& - m_i.
\end{eqnarray*}
\end{proof}

\subsection{Behaviour of the $\mathbf{g}$-vectors under mutation.}
Let $B=(b_{ij})$ be an antisymmetric integer $r\times r$-matrix. Let
$\cc\supset \add T$ be a triangulated $2$-CY realization of $B$. Let
$T_1, \ldots, T_r$ be the non isomorphic indecomposable factors of
$T$. Let $1\leq l\leq r$ be an integer and $T'=\mu_l(T)$ the
mutation of $T$ at $T_l$. Thus, the non isomorphic indecomposable
factors of $T'$ are $T_1,\ldots, T_l^*,\ldots, T_r$. Let $\cc_1$ be
a principal gluing of $\cc\supset \add T$ and $\cc_2$ a principal
gluing of $\cc\supset \add T'$ (we assume such gluings exist).
For each indecomposable object
$M\in \cc$ reachable from $T$, we denote by $\mathcal{F}_M^{T}$ and
$\mathcal{F}_M^{T'}$ the $\mathcal{F}$-polynomials of $M$ with
respect to $\cc_1$ and $\cc_2$ respectively.  Following
\cite{FominZelevinsky07}, we define the integers $h_l$ and $h_l'$ by
\begin{eqnarray*}
u^{h_l}=\mathcal{F}_M^{T}|_{Trop(u)}(u^{[-b_{k1}]_{+}},\ldots, u^{-1},\ldots, u^{[-b_{kn}]_{+}}),\\
u^{h_l'}=\mathcal{F}_M^{T'}|_{Trop(u)}(u^{[b_{k1}]_{+}},\ldots,
u^{-1},\ldots, u^{[b_{kn}]_{+}}),
\end{eqnarray*}
where $u^{-1}$ is in the $l$-th position.

The following proposition shows that if the gluings $\cc_1$ and
$\cc_2$ exist (for example if $\cc$ is algebraic and
Conjecture~\ref{conj:ExistenceOfGlueings} holds), then Conjecture~6.10
of \cite{FominZelevinsky07} holds for the cluster
algebra with principal coefficients associated with  $B$.

\begin{proposition}\label{prop9}
 In the above notation, we have
\[
h_l'=-[[\mbox{ind}_{T}(M):T_l]]_{+}, \ \ h_l=\mbox{min}(0,
[\mbox{ind}_{T}(M):T_l]).
\]
\end{proposition}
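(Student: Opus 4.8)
The plan is to compute both tropical degrees by first making the two $F$-polynomials explicit and then reducing each evaluation to a minimum of a linear functional over a lattice of dimension vectors. By definition the principal gluings $\cc_1$ and $\cc_2$ realize the cluster algebras with principal coefficients attached to the neighbouring seeds $T$ and $T'=\mu_l(T)$, so Theorem~\ref{thm4} applies in each. Writing (via the gluing equivalences) $U=\Hom_{\cc}(T,\Sigma M)$ as a module over $\End_\cc(T)$ and $U'=\Hom_{\cc}(T',\Sigma M)$ as a module over $\End_\cc(T')$, it gives
\[
\mathcal F_M^{T}=1+\sum_{e\neq 0}\chi(Gr_e(U))\prod_{j=1}^r y_j^{e_j},\qquad
\mathcal F_M^{T'}=1+\sum_{e\neq 0}\chi(Gr_{e}(U'))\prod_{j=1}^r y_j^{e_j},
\]
with nonnegative coefficients, the exponents $e$ running over dimension vectors of submodules. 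Since the tropical sum $\oplus$ is the minimum of exponents and the positive integer coefficients become the tropical unit, the two evaluations collapse to
\[
h_l=\min_{N\subseteq U}\theta_l(\dimv N),\qquad h_l'=\min_{N'\subseteq U'}\theta_l'(\dimv N'),
\]
where $\theta_l(e)=-e_l+\sum_{j\neq l}[-b_{lj}]_+e_j$ and $\theta_l'(e)=-e_l+\sum_{j\neq l}[b_{lj}]_+e_j$. As the zero submodule contributes $0$ to each minimum, I immediately get the free upper bounds $h_l\le 0$ and $h_l'\le 0$.

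Write $g_l=[\ind_T(M):T_l]$, which by Proposition~\ref{prop6} (or~\ref{prop3}) is the $l$-th component of the $\mathbf g$-vector. The single representation-theoretic input I need is the identity
\[
[\dimv U]_l=\dim\Ext^1_\cc(T_l,M)=[-g_l]_+,\qquad [\dimv U']_l=\dim\Ext^1_\cc(T_l^*,M)=[g_l]_+,
\]
the second being the first applied to $T'$ (where the $l$-th index component is $g_l'=-g_l$). This follows from the index theory of \cite{DehyKeller08}, or directly by applying $\Hom_\cc(T_l,-)$ to the minimal $\add T$-presentation of $M$ together with the exchange triangle at $T_l$. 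Granting it, submodule monotonicity $[\dimv N]_l\le[\dimv U]_l$ and the nonnegativity of the remaining terms yield the lower bounds $\theta_l(\dimv N)\ge -[\dimv U]_l=-[-g_l]_+$ and $\theta_l'(\dimv N')\ge -[g_l]_+$, that is $h_l\ge\min(0,g_l)$ and $h_l'\ge -[g_l]_+$.

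Now I squeeze. If $g_l\ge 0$ then $\min(0,g_l)=0$, so the lower bound $h_l\ge 0$ meets the free upper bound $h_l\le 0$ and forces $h_l=0=\min(0,g_l)$; symmetrically, if $g_l\le 0$ then $-[g_l]_+=0$ forces $h_l'=0=-[g_l]_+$. In each sign case exactly one of the two degrees is thereby determined, and the other is read off from the linking relation $h_l-h_l'=g_l$: this gives $h_l'=-g_l=-[g_l]_+$ when $g_l\ge 0$ and $h_l=g_l=\min(0,g_l)$ when $g_l\le 0$, completing the proof in all cases (the summand cases $M\cong T_l$ or $M\cong T_l^*$, where one $F$-polynomial is trivial, are covered as well).

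The main obstacle is the relation $h_l-h_l'=g_l$, equivalently the one nontrivial attainment of the minimum in each case. Conceptually this is the shadow in $\ca$ of the identity $\ind_{T'}(M)=\phi_\pm(\ind_T M)$ (Theorem~3.1 of \cite{DehyKeller08}, already used in Theorem~\ref{thm3}(e)), the sign of $g_l$ selecting $\phi_+$ or $\phi_-$. I expect to derive it either from Fomin--Zelevinsky's separation-of-additions formula, which expresses the same cluster variable $X_M'$ at both seeds and pins $h_l,h_l'$ once $g_l'=-g_l$ is known, or by exhibiting the submodule attaining the minimum explicitly, generated by the vertex-$l$ part of $U$ (resp.\ $U'$), where the absence of loops and $2$-cycles in the quivers (Lemma~\ref{lemma:Multiplicities}) forces the off-diagonal contribution $\sum_{j\neq l}[-b_{lj}]_+[\dimv N]_j$ to vanish. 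Controlling this off-diagonal term against longer oriented cycles is the delicate point.
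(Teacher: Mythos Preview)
Your argument rests on the identity $[\dimv U']_l=\dim\Ext^1_\cc(T_l^*,M)=[g_l]_+$ (and its dual for $U$), but this is false in general. Take $Q$ of type $D_4$ with arrows $1,2,3\to 0$, the canonical cluster-tilting object $T=kQ$ in $\cc_Q$, and $M$ the indecomposable rigid module with dimension vector $e_0+e_1+e_2$. Its minimal projective resolution $0\to P_0\to P_1\oplus P_2\to M\to 0$ gives $g_3=[\ind_T(M):P_3]=0$, yet $\dim\Ext^1_\cc(P_3,M)=\dim\Ext^1_{kQ}(M,P_3)=1$ (since $\Hom(P_1\oplus P_2,P_3)=0$ while $\Hom(P_0,P_3)=k$). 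So your crude bound $\theta_3(e)\ge -e_3\ge -[\dimv U]_3$ only yields $h_3\ge -1$, not the required $h_3\ge 0$, and the squeeze fails. The same example shows that the submodule ``generated by the vertex-$l$ part'' is not supported at vertex $l$ alone; here the arrow in $Q_{\add T}$ forces a nonzero map $U_3\to U_0$, so this submodule has dimension vector $(1,0,0,1)$ and $\theta_3$ evaluates to $-1+[-b_{30}]_+\cdot 1$, which is exactly the off-diagonal term you were worried about.

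The paper fixes both gaps at once by replacing the vertex dimension with a finer invariant: the multiplicity of $S_l$ in the \emph{socle} of $U'$ (respectively $U$). This multiplicity \emph{is} equal to $[g_l]_+$ (respectively $[-g_l]_+$); in the example above the socle of $U$ contains no copy of $S_3$, because $U_3\to U_0$ is an isomorphism. One then feeds the bound $\dim\Hom(S_l,N')\le[g_l]_+$ into the exact sequence obtained by applying $\Hom(-,N')$ to the start $\bigoplus_i P_i^{a_i}\to P_l\to S_l\to 0$ of a projective resolution of $S_l$: this gives $e_l\le[g_l]_+ +\sum_i a_i e_i$, which is precisely the sharp lower bound $\theta_l'(e)\ge -[g_l]_+$ once one matches $a_i$ with $[b_{li}]_+$. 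Equality is attained at the semisimple submodule $N'=(S_l)^{[g_l]_+}$, whose dimension vector is concentrated at vertex $l$, so the off-diagonal contribution vanishes trivially and no control of longer cycles is needed. With attainment established directly on each side, the linking relation $h_l-h_l'=g_l$ is never invoked.
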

\begin{proof}
Let $S_i$, $1\leq i\leq r$, be the top of the indecomposable right
projective $\End_{\cc}(T')$-module $\Hom_{\cc}(T', T'_i)$. First we
will show that $g_l=[\mbox{ind}_T(M):T_l]> 0$ iff $S_l$ occurs as a
submodule of the module $\Hom_{\cc}(T', \Sigma M)$ and that the
multiplicity of $S_l$ in the socle of  $\Hom_{\cc}(T', \Sigma M)$
equals $[\mbox{ind}_T(M):T_l]$.

Suppose  that $g_l> 0$. Then we have the following triangle
\[
 T^1_M\to T^{0'}_M\oplus (T_l)^{g_l}\to M\to \Sigma T^1_M
\]
with $T^1_M$, $T^{0'}_M$ in $\add T$ and $[T^{0'}_M:T_l]=0$, where
$(T_l)^{g_l}$ is the sum of  $g_{l}$ copies of $T_l$.  Applying the
functor $\Hom_{\cc}(T', ?)$ to the shift of the above triangle, we
get the exact sequence
\[
 0\to \Hom_{\cc}(T', \Sigma (T_l)^{g_l})\to \Hom_{\cc}(T',
 \Sigma M)\to \Hom_{\cc}(T', \Sigma^2 T^1_M) \to \ldots
\]
Note that $\Hom_{\cc}(T',\Sigma (T_l)^g)\cong(S_l)^{g_l}$,  {\em
i.e.} $S_l$ occurs  with multiplicity $\geq g_l$ in the socle of
$\Hom_{\cc}(T', \Sigma M)$. If the multiplicity of $S_l$ in the
socle of $\Hom_{\cc}(T',\Sigma M)$ was $>g_l$, then $S_l$ would
occur in the socle of  $\Hom_{\cc}(T',\Sigma^2T_M^1)$. This is not
the case since $\Hom_{\cc}(T',\Sigma^2T_M^1)$ is the sum of
injective indecomposables not isomorphic to the injective hull
$\Hom_{\cc}(T',\Sigma^2T_l)$ of $S_l$. Conversely, if $S_l$ occurs
in the socle of $\Hom_{\cc}(T', \Sigma M)$, thanks to the split
idempotents property of $\cc$, we have an irreducible morphism
$\alpha: \Sigma T_l\to \Sigma M$ in $\cc$.  Thus, by the definition
of the index, we get $g_l>0$. Moreover, the multiplicity of $S_l$
equals $g_l$ by the same argument as before.

Assume that $g_l>0$. For an arbitrary submodule  $U$ of
$\Hom_{\cc}(T', \Sigma M)$, let $\mbox{dim}\, U=(e_1, \ldots, e_n)$.
We will show that
\[
e_l\leq g_l +\sum_i[b_{il}]_{+}e_i.
\]

Indeed, consider the projective resolution of the simple module $S_l$
\[
 \ldots \to \oplus P_i^{b_{il}}\to P_l\to S_l\to 0.
\]
Applying the functor $\Hom_{\End_{\cc}(T')}(?,U)$, we get  the exact sequence
\[
 0\to \Hom(S_l, U)\to \Hom(P_l, U)\to \Hom(\oplus P_i^{b_{il}}, U)\to \ldots,
\]
which implies the inequality because the dimension of $\Hom(S_l,U)$
is less or equal to the multiplicity of $S_l$ in the socle of
$\Hom_{\cc}(T',\Sigma M)$, which equals $g_l$. By
Theorem~\ref{thm4}, we have

\begin{eqnarray*}
 u^{h_l'}&=&\mathcal{F}_M^{T'}|_{Trop(u)}(u^{[b_{k1}]_{+}},\ldots, u^{-1},\ldots, u^{[b_{kn}]_{+}})\\
&=& 1\oplus \bigoplus_{e}\chi(Gr_e(\Hom_{\cc}(T',\Sigma M)))u^{-e_l}\prod_{i\neq l}(u^{[b_{ki}]_{+}})^{e_i}.
\end{eqnarray*}
We have just shown that for each $e$, we have
\[
 -e_l+\sum_i[b_{il}]_{+}e_i\geq g_l,
\]
and the equality occurs if $e$ is the dimension vector of the
submodule $(S_l)^{g_l}$.  We conclude that we have
$h_l'=-[\mbox{ind}_{T}(M):T_l]$. If $g_l\leq 0$, then $S_l$ does not
occur in the socle of $\Hom_{\cc}(T',\Sigma M)$ and it is easy to
see that $h_l'=0$. Dually, we have the equality $h_l=\mbox{min}(0,
[\mbox{ind}_{T}(M):T_l])$.
\end{proof}

\subsection{Acyclic cluster algebras with principal coefficients.}

Let $B$ be an antisymmetric integer $r\times r$-matrix. Assume that
$B$ is acyclic. Let $Q$ be the corresponding quiver of $B$ with set
of vertices $Q_0=\{1,\ldots, r\}$ and with set of arrows $Q_1$. Let
$\cc_Q$ be the cluster category of $Q$, $T=kQ$ the canonical cluster
tilting object of $\cc_Q$. We claim that the cluster category
$\cc_Q\supset \add T$ admits a principal gluing.

Indeed, we define a new quiver $\tilde{Q} =
Q\overleftarrow{\coprod}Q_0$ associated with $Q$: Its set of
vertices is $\{1,\ldots, 2r\}$, and its arrows are those of $Q$ and
new arrows from $r+i$ to $i$ for each vertex $i$ of $Q$. Since $Q$
is acyclic,  so is $\tilde{Q}$,  hence $k\tilde{Q}$ is
finite-dimensional and  hereditary. Thus,  we have the cluster
category $\cc_{\tilde{Q}}$ which is a triangulated $2$-CY
realization of the matrix
\begin{eqnarray*}
 \left(\begin{array}{ll} B&-I_r\\I_r&\quad 0
        \end{array}
\right).
\end{eqnarray*}
In particular, $\cc_{\tilde{Q}}\supset \add k\tilde{Q}$ is a
principal gluing for $\cc_Q\supset \add T$. Thus,
Proposition~\ref{prop6}, Theorem~\ref{thm3}, Theorem~\ref{thm4} and
Proposition~\ref{prop7} hold for acyclic cluster algebras with
principal coefficients.

Let $P_i$, $1\leq i\leq 2r$, be the non isomorphic indecomposable
projective right  modules of $k\tilde{Q}$.  Let $\cp=\add
(P_{r+1}\oplus \ldots \oplus P_{2r})$.  We have a triangle
equivalence
\[
 ^\perp(\Sigma\cp)/\cp\stackrel{\sim}{\longrightarrow} \cc_Q.
\]

 Recall that there is a partial order on $\Z^r$ defined by
\[
\alpha \leq \beta  \,\,\mbox{iff}\ \, \alpha(i)\leq \beta(i), \
\mbox{for} \, 1\leq i\leq r, \, \mbox{where} \, \alpha,\, \beta \in
\Z^r.
\]

\begin{proposition}
Let $B$ be a $2r\times r$   integer matrix, whose principal part is
antisymmetric and acyclic and whose complementary part is the
identity matrix. Let $\sigma$ be a sequence $k_1$,  $\ldots$, $k_m$
with $1\leq k_i\leq r$. Denote by $B_{\sigma}$ the matrix
\[
 \mu_{k_1}\circ\mu_{k_2}\ldots \circ\mu_{k_m}(B)=(b_{ij}^{\sigma}).
\]
Let $E_{\sigma}=(e_1,e_2,\ldots, e_r)$ be the complementary part of
$B_{\sigma}$, where $e_i\in \Z^r$, $1\leq i\leq r$.  Then for each
$i$, we have $e_i\leq 0$ or $e_i\geq 0$.
\end{proposition}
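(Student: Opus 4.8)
The plan is to recognise the assertion as the \emph{sign-coherence of $c$-vectors}: the columns $e_1,\ldots,e_r$ of the complementary part $E_\sigma$ are exactly the $c$-vectors of the seed reached from the initial seed along $\sigma$, and the claim is that each is either $\geq 0$ or $\leq 0$ componentwise. I would first translate everything into the principal gluing $\cc_{\tilde{Q}}\supset\add k\tilde{Q}$ constructed just above. Since every $k_i$ satisfies $1\le k_i\le r$, the sequence $\sigma$ mutates only non-frozen summands, so it produces a cluster-tilting object $T'_\sigma=\big(\bigoplus_{j=1}^r M_j\big)\oplus\cp$ in which $\cp=\add(T_{r+1}\oplus\cdots\oplus T_{2r})$ is left untouched, and $B(T'_\sigma)=B_\sigma$. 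Thus $e_j$ records the net number of arrows between the frozen summands and the non-frozen summand $M_j$ in the quiver $Q(T'_\sigma)$.

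Next I would reduce sign-coherence of $e_j$ to a statement about the exchange triangles of $M_j$. By Lemma~\ref{lemma:Multiplicities}~b), applied to the mutation of $T'_\sigma$ at $M_j$ with exchange triangles $M_j^*\to E\to M_j$ and $M_j\to E'\to M_j^*$, the $i$-th component of $e_j$ equals $[\mbox{mult}_{E}\,T_{r+i}]-[\mbox{mult}_{E'}\,T_{r+i}]$. Because $Q(T'_\sigma)$ has no $2$-cycles, at most one of these two multiplicities is nonzero for each $i$; hence $e_j\geq 0$ (resp. $e_j\leq 0$) if and only if every frozen summand occurring in $E\oplus E'$ occurs in $E$ (resp. in $E'$). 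In this way the whole proposition becomes equivalent to the purely categorical claim that the frozen summands of the two exchange middle terms of any reachable non-frozen indecomposable all lie on one side.

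To prove this one-sidedness I would exploit the homological position of $\cp$. Both $M_j$ and $M_j^*$ lie in $\cu={}^{\perp}(\Sigma\cp)$, and the $2$-Calabi-Yau duality upgrades $\Ext^1_\cc(\cp,\cu)=0$ to $\Ext^1_\cc(\cu,\cp)=0$ as well, so applying $\Hom_\cc(T_{r+i},-)$ to each exchange triangle yields short exact sequences and identifies the frozen corrections with the discrepancy between the exchange triangle of $\bar{M}_j$ in $\cu/\cp\cong\cc_Q$ and its lift to $\cc_{\tilde{Q}}$. I would then invoke the analogue for $G=\Hom_\cc(T,?)$ of the $F$-exactness dichotomy of Proposition~\ref{prop4} (exactly one of the two exchange triangles is $G$-exact), together with Theorem~\ref{thm4}, which forces the frozen components of $\ind_T$ of reachable non-frozen rigid objects to vanish, to conclude that the frozen correction concentrates entirely on the non-$G$-exact triangle. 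The main obstacle is precisely this last step: a naive induction on the length of $\sigma$ using the matrix mutation rule fails, because adding $[b_{kl}]_+e_k$ with $e_k\geq 0$ to a column $e_l\leq 0$ can destroy sign-coherence. The real content of the proposition is therefore the genuinely categorical fact that the two exchange triangles never split the frozen summands between them, and the delicate point will be the minimal-approximation bookkeeping needed to pass from the equalities of $\Hom$-dimensions produced above to the corresponding equalities of summand multiplicities.
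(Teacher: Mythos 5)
Your reduction is sound as far as it goes: passing to the principal gluing $\cc_{\tilde{Q}}\supset\add k\tilde{Q}$, identifying the $i$-th entry of $e_j$ with the difference of the multiplicities of $T_{r+i}$ in the two exchange middle terms of $M_j$ via Lemma~\ref{lemma:Multiplicities}~b), and observing that the absence of $2$-cycles makes sign-coherence of $e_j$ equivalent to the statement that all frozen summands of $E\oplus E'$ sit on one side --- all of this is correct. But at that point you have only restated the proposition in categorical language; you have not proved it. The step you defer to ``the analogue for $G$ of the $F$-exactness dichotomy of Proposition~\ref{prop4}'' is precisely the content of the claim, and Proposition~\ref{prop4} is proved in the paper only for Frobenius categories (via Lemma~\ref{lem3} and the exact structure given by $F$-exact conflations); no such dichotomy is established for the subtriangulated setting, and Theorem~\ref{thm4} (vanishing of the frozen components of $\ind_T$) does not by itself localize the frozen summands in one of the two triangles. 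You acknowledge this yourself when you write that ``the delicate point will be the minimal-approximation bookkeeping''; that delicate point is the theorem, so the proposal has a genuine gap.

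The paper avoids this entirely with a two-line contradiction argument that you should compare with your plan. Suppose some column, say the first, of $E_\sigma$ has entries of both signs, i.e.\ there are frozen indices $r<i,j\leq 2r$ with $b^\sigma_{i1}>0$ and $b^\sigma_{j1}<0$. In the quiver of the reachable cluster-tilting object $T'$ realizing $B_\sigma$ this gives arrows $P_i\to T'_1$ and $T'_1\to P_j$. Performing one more mutation, at the non-frozen vertex $1$, the matrix mutation rule produces $b_{i1}b_{1j}>0$ arrows from $P_i$ to $P_j$ in $\mu_1(Q(T'))$, and condition~4) of Definition~\ref{def:TriangClusterStructure} forces these to be arrows of $Q(\mu_1(T'))$, i.e.\ irreducible morphisms $P_i\to P_j$. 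This is impossible because the $P_l$, $l>r$, are simple pairwise non-isomorphic projective $k\tilde{Q}$-modules, so $\Hom_{\cc_{\tilde{Q}}}(P_i,P_j)=\Hom_{k\tilde{Q}}(P_i,P_j)=0$. In other words, the one-sidedness you are trying to extract from the exchange triangles of $M_j$ itself is obtained for free by looking one mutation further and using only the absence of morphisms between distinct frozen projectives.
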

\begin{proof}
Suppose that there is some $k$ such that $e_k\nleq 0$ and $e_k\ngeq
0$. For simplicity, assume that $k=1$, {\em i.e.} there are $\ r<
i,\,j \leq 2r$ such that $b_{i1}^{\sigma}>0$ and
$b_{j1}^{\sigma}<0$.

Let $Q$ be the quiver corresponding to the principal part of $B$ and
$\tilde{Q}$ as constructed above. By the argument above, there is a
cluster tilting object $T'$ of $\cc_{k\tilde{Q}}$ such that
$B(T')^0= B_{\sigma}$. We have arrows $P_i\to T_1'$ and $T_1'\to
P_j$, where $T_1'$ is the indecomposable direct summand of $T'$
corresponding to the first column of $B_{\sigma}$. Now if we
consider the mutation in direction $1$ of $T'$, we will have an
arrow $P_i\to P_j$ in $Q_{\mu_1(T')}$. But this is impossible, since
for $r< l\leq 2r$, the $P_l$ are simple pairwise non isomorphic
modules so we have
\[
\Hom_{\cc_{k\tilde{Q}}}(P_i,P_j) =  \Hom_{k\tilde{Q}}(P_i, P_j) =0.
\]
\end{proof}



\def\cprime{$'$}
\providecommand{\bysame}{\leavevmode\hbox to3em{\hrulefill}\thinspace}
\providecommand{\MR}{\relax\ifhmode\unskip\space\fi MR }
\providecommand{\MRhref}[2]{%
  \href{http://www.ams.org/mathscinet-getitem?mr=#1}{#2}
}
\providecommand{\href}[2]{#2}

\end{document}